\newtheorem{thm}{Theorem}[section]
\newtheorem{prop}[thm]{Proposition}
\newtheorem{ex}[thm]{Example}
\newtheorem{remark}[thm]{Remark}
\newtheorem*{theorem-nonA}{Theorem A}
\newtheorem{defn}[thm]{Definition}
\DeclareMathOperator{\trace}{trace}
\DeclareMathOperator{\id}{id}
\DeclareMathOperator{\supp}{supp}
\def\txtd{{\textnormal{d}}}
\def\txte{{\textnormal{e}}}
\def\txtf{{\textnormal{f}}}
\def\txtp{{\textnormal{p}}}
\def\txts{{\textnormal{s}}}
\def\txtD{{\textnormal{D}}}
\def\txtT{{\textnormal{T}}}
\def\R{\mathbb{R}}
\def\E{\mathbb{E}}
\def\I{\infty}
\newcommand{\rmd}{\mathrm{d}}
\newcommand{\rmD}{\mathrm{D}}
\renewcommand{\epsilon}{\varepsilon}
\newcommand{\fa}          {\quad \text{for all } \,}
\newcommand{\faa}          {\quad \text{for almost all } \,}
\newcommand{\be}{\begin{equation}}
\newcommand{\ee}{\end{equation}}
\newcommand{\bea}{\begin{eqnarray}}
\newcommand{\eea}{\end{eqnarray}}
\newcommand{\beann}{\begin{eqnarray*}}
\newcommand{\eeann}{\end{eqnarray*}}
\newcommand{\benn}{\begin{equation*}}
\newcommand{\eenn}{\end{equation*}}
\def\ra{\rightarrow}
\def\I{\infty}
\newcommand{\cK}{{\mathcal K}}  
\newcommand{\cM}{{\mathcal M}}  
\newcommand{\cN}{{\mathcal N}}  
\newcommand{\cU}{{\mathcal U}}  
\newcommand{\cX}{{\mathcal X}}  
\numberwithin{equation}{section}
\begin{document}

\author{Maximilian Engel\thanks{Department of Mathematics, Freie Universit{\"a}t Berlin, Arnimallee 6, 14195 Berlin, Germany.}~~and Christian Kuehn\thanks{Faculty of Mathematics, Technical University of Munich,
Boltzmannstr. 3, D-85748 Garching bei M\"{u}nchen.}}
 
\title{A random dynamical systems perspective on isochronicity for stochastic oscillations}

\maketitle

\begin{abstract}
For an attracting periodic orbit (limit cycle) of a deterministic dynamical system, one defines the isochron for each point of the orbit as the cross-section with fixed return time under the flow. Equivalently, isochrons can be characterized as stable manifolds foliating neighborhoods of the limit cycle or as level sets of an isochron map. In recent years, there has been a lively discussion in the mathematical physics community on how to define isochrons for stochastic oscillations, i.e.~limit cycles or heteroclinic cycles exposed to stochastic noise. 
The main discussion has concerned an approach finding stochastic isochrons as sections of equal expected return times versus the idea of considering eigenfunctions of the backward Kolmogorov operator. 
We discuss the problem in the framework of random dynamical systems and introduce a new rigorous definition of stochastic isochrons as random stable manifolds for random periodic solutions with noise-dependent period. This allows us to establish a random version of isochron maps whose level sets coincide with the random stable manifolds. Finally, we discuss links between the random dynamical systems interpretation and the equal expected return time approach via averaged quantities.
\end{abstract}

{\bf 2020 Mathematics Subject Classification:} 37H10, 37H15, 37C27, 60H10, 34F05, 82C31, 92B25 \bigskip

{\bf Keywords:} \emph{isochrons, periodic orbits, stochastic differential equations, random dynamical systems}.


\section{Introduction}
\label{sec:intro}

Periodic behavior is ubiquitous in the natural sciences and in engineering. Accordingly, many mathematical models of dynamical systems, usually given by ordinary differential equations (ODEs), are characterized by the existence of attracting periodic orbits, also called \emph{limit cycles}. Interpreting the limit cycle as a ``clock'' for the system, one can ask which parts of the state space can be associated with which ``time'' on the clock.

It turns out that one can generally divide the state space into sections, called \emph{isochrons}, intersecting the asymptotically stable periodic orbit. Trajectories starting on a particular isochron all converge to the trajectory starting at the intersection of the isochron and the limit cycle. Hence, each point in the basin of attraction of the limit cycle can be allocated a time on the periodic orbit, by belonging to a particular isochron. Isochrons can then be characterized as the sections intersecting the limit cycle, such that the return time under the flow to the same section always equals the period of the attracting orbit and, hence, the return time is the same for all isochrons. The analysis of ODEs provides additional characterizations of isochrons, involving, for example, an isochron map or eigenfunctions of associated operators.

Clearly, mathematical models are simplifications which often leave out parameters and details of the described physical or biological system. Hence, a large number of degrees of freedom is inherent in the modeling. The introduction of random noise is often a suitable way to integrate such non-specified components into the model such that, for example, an ODE becomes a stochastic differential equation (SDE). Examples for stochastic oscillators/oscillations can be found in a wide variety of applications such as neuroscience~\cite{Bauermeisteretal,BrooksBressloff,Lindneretal,SuRubinTerman}, ecology~\cite{SchreiberBenaimAtchade,SadhuKuehn}, bio-mechanics~\cite{GatesSuDingwell,RevzenGuckenheimer}, geoscience~\cite{BenziParisiSuteraVulpiani,NicolisNicolis}, among many others. In addition, stochastic oscillations have become a recently very active research topic in the rigorous theory of stochastic dynamical systems with small noise~\cite{BaudelBerglund,BerglundGentzKuehn1,BerglundLandon,Giacominetal2018}. 

Lately, there has been a lively discussion~\cite{Pikovskycomment,ThomasLindnerReply} in the mathematical physics community about how to extend the definition and analysis of isochrons to the stochastic setting. As pointed out above, there are several different characterizations in the deterministic case inspiring analogous stochastic approaches. So far, there are two main approaches to define stochastic isochrons in the physics literature, both focused on stochastic differential equations. One approach, due to Thomas and Lindner \cite{ThomasLindner}, focuses on eigenfunctions of the associated infinitesimal generator $\mathcal L$. The other one is due to Schwabedal and Pikovsky \cite{SchwabedalPikovsky}, who introduce isochrons for noisy systems as sections $W^\E(x)$ with the mean first return time to the same section $W^\E(x)$ being a constant $\bar T$, equaling the average oscillation period. Cao, Lindner and Thomas \cite{caolindnerthomas19} have used the Andronov-Vitt-Pontryagin formula, involving the \emph{backward Kolmogorov operator} $\mathcal L$, with appropriate boundary conditions to establish the isochron functions for $W^\E(x)$ more rigorously. 

These approaches have in common that they focus on the ``macroscopic'' or ``coarse-grained'' level by considering averaged objects and associated operators. We complement the existing suggestions by a new approach within the theory of \emph{random dynamical systems} (see e.g.~\cite{a98}) which has proven to give a framework for translating many deterministic dynamical concepts into the stochastic context. A random dynamical system in this sense consists of a model of the time-dependent noise formalized as a a dynamical system $\theta$ on the probability space, and a model of the dynamics on the state space formalized as a \textit{cocycle} $\varphi$ over $\theta$.
This point of view considers the asymptotic behaviour of typical trajectories. As trajectories of random dynamical systems depend on the noise realization, any convergent behaviour of individual trajectories to a fixed attractor cannot be expected. The forward in time evolution of sets under the same noise realization yields the \emph{random forward attractor} $A$ which is a time-dependent object with fibers $A(\theta_t \omega)$.
An alternative view point is to consider, for a fixed noise realization $\omega \in \Omega$, the flow of a set of initial conditions from time $t=-T$ to a fixed endpoint in time, say $t=0$, and then take the (pullback) limit $T\to\infty$. If trajectories of initial conditions converge under this procedure to fibers $\tilde A(\omega)$ of some random set $\tilde A$, then this set is called a \emph{random pullback attractor}.

In this paper, we will consider mainly situations where the random dynamical system is induced by an SDE and there exists a random (forward and/or pullback) attractor $A$ which is topologically equivalent to a cycle for each noise realization, i.e.~a \emph{attracting random cycle}, whose existence can be made generic in a suitably localized setup around a deterministic limit cycle. We will extend the definition of a random periodic solution $\psi$ \cite{ZhaoZheng09} living on such a random attractor to situations where the period is random, giving a pair $(\psi, T)$. Isochrons can then be defined as \emph{random stable manifolds} $W^{\txtf}(\omega, x)$ for points $x$ on the attracting random cycle $A(\omega)$, in particular for random periodic solutions. We usually consider situations with a spectrum of exponential asymptotic growth rates, the \textit{Lyapunov exponents} $\lambda_1 > \lambda_2 > \dots > \lambda_p$, which allows to transform the idea of hyperbolicity to the random context. Additionally, we can introduce a time-dependent \emph{random isochron map} $\tilde \phi$, such that the isochrons are level sets of such a map. Hence, on a pathwise level, we achieve a complete generalization of deterministic to random isochronicity, which is the key contribution of this work.
The main results can be summarized in the following theorem:
\begin{theorem-nonA}
Assume the random dynamical system $(\theta, \varphi)$ on $\mathbb{R}^m$ has a hyperbolic random limit cycle, supporting a random periodic solution with possibly noise-dependent period. Then, under approptriate assumptions on smoothness and boundedness,
\begin{enumerate} 
\item the random forward isochrons are smooth invariant random manifolds which foliate the stable neighbourhood of the random limit cycle on each noise fibre,
\item there exists a smooth and measurable (non-autonomus) random isochron map $\tilde \phi$ whose level sets are the random isochrons and whose time derivative along the random flow is constant.
\end{enumerate}
\end{theorem-nonA}

The remainder of the paper is structured as follows. Section~\ref{sec:deterministiccase} gives an introduction to the deterministic theory of isochrons, summarizing the main properties that we can then transform into the random dynamical systems setting. The latter is discussed in Section~\ref{sec:RDS}, where we elucidate the notions of Lyapunov exponents, random attractors and, specifically, random limit cycles and their existence. 
Section~\ref{sec:random_isochrons} establishes the two main statements, contained in Theorem A: in Section~\ref{sec:stable_manifold}, we show Theorem~\ref{thm:SMT}, summarizing different scenarios in which random isochrons are random stable manifolds foliating the neighbourhoods of the random limit cycle. 
In Section~\ref{sec:randomisomap}, we prove Theorem~\ref{def:randomisochronmap}, generalizing characteristic properties of the isochron map to the random case. 
We conclude Section~\ref{sec:random_isochrons} with an elaboration on the relationship between expected quantities of the RDS approach and the definition of stochastic isochrons via mean first return times, i.e.,~one of the main physics approaches. 
Additionally, the paper contains a brief conclusion with outlook, and an appendix with some background on random dynamical systems.

\section{The deterministic case} \label{sec:deterministiccase}

The basic facts about isochrons have been established in~\cite{Guckenheimer13}. Here we 
summarize some facts restricted to the state space $\cX = \R^m$ but the theory easily lifts to ordinary differential equations (ODEs) on smooth manifolds $\cM=\cX$. Consider an ODE
\be
\label{eq:ODE}
x'=f(x),\qquad x(0)=x_0 \in\R^m,
\ee
where $f$ is $C^k$ for $k\geq 1$. Let $\Phi(x_0,t)=x(t)$ be the flow 
associated to~\eqref{eq:ODE} and suppose $\gamma=\{\gamma(t)\}_{t\in[0,\tau_\gamma]}$ is a 
hyperbolic periodic orbit with minimal period $\tau_\gamma>0$. A \emph{cross-section} $\cN\subset 
\R^m$ at $x\in\gamma$ is a submanifold such that $x\in \cN$, $\bar{\cN}\cap \gamma=\{x\}$, and
\benn
\txtT_x\cN \oplus \txtT_x \gamma = \txtT_x\R^m\simeq \R^m,
\eenn 
i.e.~the submanifold $\cN$ and the orbit $\gamma$ intersect transversally.

Let $g:\cN\ra \cN$ be the \emph{Poincar\'e map} defined by the first return of $y\in\cN$ under
the flow $\Phi$ with $\cN$ (see Figure~\ref{fig:det_isochron_sketch}); locally near any point $x\in \gamma$ the map $g$ 
is well-defined. For simplicity (and with the look forward towards the noisy case) let us
assume that $\gamma$ is a \emph{stable hyperbolic periodic orbit}, i.e.~the eigenvalues $\mu_i$ of $\txtD g(x)$, also called \emph{characteristic multipliers}, satisfy $\mu_1 = 1$ and $\left|\mu_2\right|, \dots, \left|\mu_m\right| < 1$, counting multiplicities. The numbers 
$$\lambda_i = \frac{1}{T}\ln \mu_i$$
are called the \emph{characteristic exponents}
(for more background on the stability of linear non-autonomous systems and associated \emph{Floquet theory} see e.g~\cite[Chapter 2.4]{Chicone}). We call such a stable hyperbolic periodic orbit a \emph{stable (hyperbolic) limit cycle} since there is a neighbourhood $\cU$ of $\gamma$ such that for $y \in \cU$ we have $\txtd(\Phi(y,t),\gamma) \to 0$, as $t \to \infty$, where $\txtd$ is the Euclidean metric on $\R^m$.
In particular, note that there is a lower bound on the speed of exponential convergence to the limit cycle, given by
$$
\lambda := \min_{i: \lambda_i \neq 0} \Re (-\lambda_i) > 0.
$$
We give a definition of \emph{isochrons} as stable sets and then establish its equivalence to level sets of a specific map. We further find these level sets to be cross-sections to $\gamma$ for which the time of first return is identical to the period $\tau_{\gamma}$, explaining the name isochrons.
\begin{defn}
The \emph{isochron} $W(x)$ of a point on a hyperbolic limit cycle $x\in\gamma$ is given by its stable set
\be
\label{eq:isochron}
W(x) :=\left\{y\in\R^m:\lim_{t\ra +\I}\txtd(\Phi(x,t),\Phi(y,t))=0\right\}.
\ee  
In particular, due to hyperbolicity, we have  that for every $ \tilde \lambda \in (0, \lambda)$
\be
\label{eq:isochronhyp}
W(x) =\left\{y\in\R^m:\sup_{t \geq 0}\txte^{\tilde \lambda t} \, \txtd(\Phi(x,t),\Phi(y,t)) < \infty \right\}.
\ee  
\end{defn}
It is by now classical that stable sets are manifolds and for each 
$x\in \gamma$, we get a stable manifold $W^\txts(x)$ diffeomorphic to $\R^{m-1}$, precisely coinciding with the isochron $W(x)$.
We can foliate a neighbourhood
$\cU$ of $\gamma$ by the manifolds $W(x)$ and these manifolds are permuted 
by the flow since 
\begin{equation} \label{eq:permutedflow}
W(\Phi(x,t))=\Phi(W(x),t)\quad \forall t\in\R.
\end{equation}
We summarize these crucial observations in the following theorem.
\begin{thm}[Theorem A in \cite{Guckenheimer13}, Theorem 2.1 in \cite{Giacominetal2018}] \label{thm:isochonsstableman_det}
Consider the flow $\Phi: \mathbb{R}^m \times \mathbb{R} \to \mathbb{R}^m$ for the ODE~\eqref{eq:ODE} with hyperbolic stable limit cycle $\gamma=\{\gamma(t)\}_{t\in[0,\tau_\gamma]}$. Then the following holds:
\begin{enumerate}
\item For each $x \in \gamma$, the isochron $W(x)$ is an $(m-1)$-dimensional manifold transverse to $\gamma$, in particular it is a cross-section of $\gamma$, of the same regularity as the vector field $f$ in the ODE~\eqref{eq:ODE} (i.e.~$C^k$ if $f$ is $C^k$).
\item The stable manifold $W^\txts(\gamma)$ contains a full neighbourhood of $\gamma$ and can be written as
\benn
W^\txts(\gamma)=\bigcup_{x\in \gamma} W(x),
\eenn 
where the union of isochrons is disjoint.
\item The map $\xi: W^\txts(\gamma) \to \mathbb{R} \mod \tau_{\gamma} $, also called the \emph{isochron map}, is given for every $y \in W^\txts(\gamma)$ as the unique $t$ such that $y \in W(\gamma(t))$, i.e.
\begin{equation} \label{isochronmap}
  \lim_{s\ra +\I}\txtd(\Phi(\gamma(\xi(y)),s),\Phi(y,s))=\lim_{s\ra +\I}\txtd(\gamma(s + \xi(y)),\Phi(y,s))=0 \,,
\end{equation}
and $\xi$ is also $C^k$.
\end{enumerate}
\end{thm}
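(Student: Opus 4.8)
Here is how I would go about proving Theorem~\ref{thm:isochonsstableman_det}.

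The plan is to deduce everything from the $C^k$ stable‑manifold theorem for the fixed points of the time‑$\tau_\gamma$ map $P:=\Phi(\,\cdot\,,\tau_\gamma)$ and then to transport the local objects along the flow. I begin with the local structure. Every point $\gamma(t)$ is a fixed point of the $C^k$ diffeomorphism $P$, and by Floquet theory the monodromy $\txtD P(\gamma(t))$ has the simple eigenvalue $1$ (with eigenvector $f(\gamma(t))$ tangent to $\gamma$, since differentiating $P(\gamma(s))=\gamma(s)$ gives $\txtD P(\gamma(t))f(\gamma(t))=f(\gamma(t))$) together with the remaining eigenvalues $\mu_2,\dots,\mu_m$ of modulus $<1$, giving an invariant splitting $\R^m=E^\txtc(t)\oplus E^\txts(t)$ with $E^\txtc(t)=\R f(\gamma(t))$. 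Since $\txtD P(\gamma(t))$ has no eigenvalue of modulus $\ge1$ besides the simple $1$, the stable‑manifold theorem provides a $C^k$ embedded $(m-1)$‑disc $W^\txts_{\loc}(\gamma(t))$ through $\gamma(t)$, tangent there to $E^\txts(t)$, which is exactly the set of nearby $y$ with $\txtd(P^n y,\gamma(t))=O(\rho^n)$ for some (hence any) $\rho\in(\max_{i\ge2}|\mu_i|,1)$. Carrying this out uniformly along the compact orbit $\gamma$ --- equivalently, invoking the invariant‑foliation theorem for the compact normally hyperbolic attractor $\gamma$ of $P$ (whose unstable bundle is trivial) --- yields these discs depending continuously on $t$, pairwise disjoint, $P$‑invariant in the sense $P\big(W^\txts_{\loc}(\gamma(t))\cap\cV\big)\subseteq W^\txts_{\loc}(\gamma(t))$ on a forward‑$P$‑invariant tube $\cV\supseteq\gamma$, and together foliating $\cV$. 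The full $C^k$ smoothness of each disc is legitimate here precisely because the only central multiplier is $1$, which lies strictly outside the disc of radius $\rho$, so no resonance obstruction occurs.

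Next I identify $W^\txts_{\loc}(x_0)$, $x_0:=\gamma(t_0)$, with the local isochron and globalise. By uniform continuity of $\Phi$ on $[0,\tau_\gamma]$, the discrete decay $\txtd(P^n y,x_0)=O(\rho^n)$ upgrades to $\sup_{t\ge0}\txte^{\tilde\lambda t}\,\txtd(\Phi(y,t),\Phi(x_0,t))<\infty$ for every $\tilde\lambda\in\big(0,-\tfrac1{\tau_\gamma}\ln\rho\big)\subseteq(0,\lambda)$, so~\eqref{eq:isochronhyp} gives $W^\txts_{\loc}(x_0)\subseteq W(x_0)$, and transversality to $\gamma$ at $x_0$ follows from $\txtT_{x_0}W^\txts_{\loc}(x_0)=E^\txts(t_0)$. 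Conversely, if $y\in W(x_0)$ then $P^n y=\Phi(y,n\tau_\gamma)\to x_0$, so for $n$ large $P^n y\in\cV$ and hence lies in some leaf $W^\txts_{\loc}(\gamma(s_n))$; as the leaves are $P$‑invariant and pairwise disjoint, $P^{n+1}y\in W^\txts_{\loc}(\gamma(s_n))\cap W^\txts_{\loc}(\gamma(s_{n+1}))$ forces $s_{n+1}=s_n$, so $s_n$ is eventually constant and, since $P^n y\to x_0$, equal to $t_0$. Thus $P^n y\in W^\txts_{\loc}(x_0)$ for $n$ large and, since $W(x_0)$ is $P$‑invariant by~\eqref{eq:permutedflow} with $t=\tau_\gamma$,
\be
 W(x_0)=\bigcup_{n\ge0}P^{-n}\big(W^\txts_{\loc}(x_0)\big)=\bigcup_{n\ge0}\Phi\big(W^\txts_{\loc}(x_0),-n\tau_\gamma\big),
\ee
an increasing union of $C^k$ embedded $(m-1)$‑discs, each $\Phi(\,\cdot\,,-n\tau_\gamma)$ being a $C^k$ diffeomorphism of $\R^m$; hence $W(x_0)$ is a $C^k$ injectively immersed $(m-1)$‑manifold, transverse to $\gamma$ everywhere because transversality is preserved by the diffeomorphisms $\Phi(\,\cdot\,,t)$, which map $\gamma$ onto itself. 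This is statement~(i); the equivariance~\eqref{eq:permutedflow} itself is immediate from~\eqref{eq:isochron}.

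For statement~(ii), $W(x)\subseteq W^\txts(\gamma)$ is trivial for each $x\in\gamma$, and since $\gamma$ is a stable hyperbolic limit cycle there is a neighbourhood $\cU\subseteq W^\txts(\gamma)$. For the reverse inclusion, if $y\in W^\txts(\gamma)$ then $\Phi(y,n\tau_\gamma)\to\gamma$, so eventually $\Phi(y,n\tau_\gamma)\in\cV$ and hence lies in some leaf $W^\txts_{\loc}(\gamma(s_n))$; the same $P$‑invariance‑and‑disjointness argument shows $s_n$ is eventually a constant $s_\ast$ (the asymptotic phase of $y$), whence $y=\Phi\big(\Phi(y,n\tau_\gamma),-n\tau_\gamma\big)\in\Phi\big(W^\txts_{\loc}(\gamma(s_\ast)),-n\tau_\gamma\big)\subseteq W(\gamma(s_\ast))$. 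Disjointness of the union holds because $y\in W(\gamma(s))\cap W(\gamma(s'))$ would force $\txtd(\gamma(t+s),\gamma(t+s'))\to0$, which is impossible for $s\not\equiv s'\ (\bmod\ \tau_\gamma)$ since $t\mapsto\txtd(\gamma(t+s),\gamma(t+s'))$ is continuous, $\tau_\gamma$‑periodic and strictly positive, hence bounded away from $0$. Together with $\cU\subseteq W^\txts(\gamma)$ this proves~(ii).

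Finally, for statement~(iii) I define $\xi(y)$ to be the unique $t\in\R/\tau_\gamma\mathbb{Z}$ with $y\in W(\gamma(t))$; well‑definedness is~(ii) and~\eqref{isochronmap} is the defining property~\eqref{eq:isochron}. To see that $\xi$ is $C^k$, fix $t_0$ and consider $\Xi(t,z):=\Phi(z,t)$ on $(-\epsilon,\epsilon)\times W^\txts_{\loc}(\gamma(t_0))$: it is $C^k$, and its derivative at $(0,\gamma(t_0))$ is an isomorphism since the image there is $\R f(\gamma(t_0))\oplus E^\txts(t_0)=\R^m$, so $\Xi$ is a $C^k$ diffeomorphism onto a neighbourhood of $\gamma(t_0)$. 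Because $z\in W^\txts_{\loc}(\gamma(t_0))\subseteq W(\gamma(t_0))$, equivariance~\eqref{eq:permutedflow} gives $\Phi(z,t)\in W(\gamma(t_0+t))$, so $\xi\circ\Xi(t,z)=t_0+t\ (\bmod\ \tau_\gamma)$, whence $\xi=t_0+\pi_1\circ\Xi^{-1}$ is $C^k$ near $\gamma(t_0)$ ($\pi_1$ being projection onto the first factor) and $\tfrac{\rmd}{\rmd t}\xi(\Phi(y,t))\equiv1$. As such neighbourhoods cover $\gamma$ and $\xi(y)=\xi(\Phi(y,T))-T$ for every $T$, flowing an arbitrary $y\in W^\txts(\gamma)$ forward into one of them and using the $C^k$‑regularity of $\Phi(\,\cdot\,,T)$ shows $\xi$ is $C^k$ on all of $W^\txts(\gamma)$. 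The step I expect to be the main obstacle is the first one: realising the local stable manifolds of the fixed points of $P$ as genuinely $C^k$ discs with the stated contraction characterisation and with the coherence needed to foliate the tube $\cV$ --- that is, invoking the invariant‑manifold/foliation machinery for the normally hyperbolic orbit $\gamma$ and verifying that the central multiplier being exactly $1$ leaves no smoothness obstruction; once this is available, the hyperbolic estimate, the asymptotic‑phase identification in~(ii), and the flow‑box argument for~(iii) are routine.
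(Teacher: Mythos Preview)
The paper does not provide its own proof of this theorem: it is stated with explicit attribution to Guckenheimer's original paper and to Giacomin et al., and the text moves on immediately to Proposition~\ref{prop:characterize}. So there is nothing in the paper to compare against beyond the citations.

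That said, your argument is a correct and complete reconstruction of the classical proof, and it is essentially the route taken in the cited references: reduce to the time-$\tau_\gamma$ diffeomorphism $P$, invoke the stable/invariant-foliation theorem for the normally hyperbolic fixed-point set $\gamma$ of $P$ (trivial unstable bundle, centre bundle spanned by $f$), identify the local stable leaves with local isochrons via the discrete-to-continuous decay upgrade, globalise by pulling back along $P^{-n}$, and obtain the $C^k$ regularity of $\xi$ from a flow-box chart $(t,z)\mapsto\Phi(z,t)$ based on a local leaf. Your self-identified ``main obstacle'' is exactly right: the only nontrivial input is the $C^k$ invariant-foliation theorem for the compact normally hyperbolic attractor $\gamma$ (Hirsch--Pugh--Shub), and the fact that the sole centre multiplier is $1$ (hence there is no loss of smoothness in the stable foliation). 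Everything downstream---the asymptotic-phase identification, disjointness via periodicity of $t\mapsto\txtd(\gamma(t+s),\gamma(t+s'))$, and the flow-box argument for $\xi$---is routine, as you say.
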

Using the properties established in Theorem~\ref{thm:isochonsstableman_det},
we can derive the following well-known characterizations of the isochrons $W(x)$, $x \in \gamma$, and of the isochron map $\xi$.
\begin{prop} \label{prop:characterize}
Assume that we are in the situation of Theorem~\ref{thm:isochonsstableman_det}. We have that
\begin{enumerate}
\item for each $x \in \gamma$, the isochron $W(x)$ is precisely the level set of $\xi(x)$, i.e.
\be \label{eq:isochron_alter}
W(x)=\left\{y\in W^\txts(\gamma): \ \xi(y) = \xi(x)\right\}\,,
\ee 
\item  the isochron map $\xi: W^\txts(\gamma) \to \mathbb{R} \mod \tau_{\gamma} $ satisfies
\be \label{eq:invariance_det}
\frac{\txtd}{\txtd t} \xi (\Phi(y,t)) = 1 \ \text{ for all } t \geq 0, \, y \in W^\txts(\gamma)\,,
\ee
\item the isochron $W(x)$ is the cross-section $\cN_x$ at $x$ such that
\be
\Phi(\cN_x,\tau_\gamma)\subseteq \cN_x,
\ee
i.e.~the cross-section on which all starting points return in the same time $\tau_{\gamma}$.
\end{enumerate}
\end{prop}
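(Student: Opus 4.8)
The plan is to read off all three characterizations from Theorem~\ref{thm:isochonsstableman_det} together with the flow-permutation identity~\eqref{eq:permutedflow}, handling them in the order (1), (2), (3); the first two are essentially bookkeeping, while (3) requires an extra dynamical argument for the converse inclusion.

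For (1) and (2) I would argue as follows. By item~3 of Theorem~\ref{thm:isochonsstableman_det}, $\xi(y)$ is the \emph{unique} $t$ (modulo $\tau_\gamma$) with $y\in W(\gamma(t))$, and by item~2 the isochrons $\{W(\gamma(t))\}_{t}$ form a disjoint cover of $W^\txts(\gamma)$. Since $x=\gamma(\xi(x))$ for $x\in\gamma$, we get $W(x)=W(\gamma(\xi(x)))$, so $y\in W(x)$ if and only if $y$ lies in the unique isochron through $\gamma(\xi(x))$, i.e.\ if and only if $\xi(y)=\xi(x)$; this is~\eqref{eq:isochron_alter}. For~\eqref{eq:invariance_det}, if $y\in W(\gamma(s))$ then \eqref{eq:permutedflow} yields $\Phi(y,t)\in\Phi(W(\gamma(s)),t)=W(\Phi(\gamma(s),t))=W(\gamma(s+t))$ for all $t\ge 0$, hence $\xi(\Phi(y,t))=\xi(y)+t \pmod{\tau_\gamma}$. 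Differentiating this (locally, so the modulus is immaterial) and using that $\xi$ is $C^k$ with $k\ge 1$ gives $\frac{\txtd}{\txtd t}\xi(\Phi(y,t))=1$.

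For (3), one inclusion is immediate: by item~1 of Theorem~\ref{thm:isochonsstableman_det} the isochron $W(x)$ is itself an $(m-1)$-dimensional cross-section at $x$ transverse to $\gamma$, and since $\Phi(x,\tau_\gamma)=x$, \eqref{eq:permutedflow} gives $\Phi(W(x),\tau_\gamma)=W(\Phi(x,\tau_\gamma))=W(x)$. For the converse (which is what makes the characterization meaningful), let $\cN_x$ be any cross-section at $x$, contained in the stable neighbourhood $\cU$, with $\Phi(\cN_x,\tau_\gamma)\subseteq\cN_x$. For $y\in\cN_x$ the orbit segment $\{\Phi(y,n\tau_\gamma)\}_{n\in\N}$ stays in $\cN_x$, while $\txtd(\Phi(y,n\tau_\gamma),\gamma)\to 0$ because $y\in\cU$; since $\overline{\cN_x}\cap\gamma=\{x\}$, this forces $\Phi(y,n\tau_\gamma)\to x=\gamma(\xi(x))$. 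As $\gamma(n\tau_\gamma+\xi(x))=\gamma(\xi(x))$, we then have $\txtd(\Phi(y,n\tau_\gamma),\gamma(n\tau_\gamma+\xi(x)))\to 0$, and an interpolation argument using uniform continuity of $\Phi$ over $\{(z,s):s\in[0,\tau_\gamma],\ \txtd(z,\gamma)\le r\}$ for small $r>0$ upgrades this to $\txtd(\Phi(y,s),\gamma(s+\xi(x)))\to 0$ as $s\to+\I$, i.e.\ $y\in W(\gamma(\xi(x)))=W(x)$ by~\eqref{isochronmap}. Thus $\cN_x\subseteq W(x)$, and since both are codimension-one cross-sections through $x$ they agree near $x$. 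I expect this last step --- trapping forces convergence to the unique intersection point, and promotion from the discrete sequence $n\tau_\gamma$ to continuous time --- to be the only genuine obstacle; the rest follows formally from Theorem~\ref{thm:isochonsstableman_det}.
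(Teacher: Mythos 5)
Your arguments for items (1) and (2) are essentially identical to the paper's: (1) is unwound from~\eqref{isochronmap} and the uniqueness of $\xi$, and (2) is the flow-invariance~\eqref{eq:permutedflow} written as $\xi(\Phi(y,t))=\xi(y)+t$ and then differentiated. For item (3) you do something genuinely different, and more. The paper proves only the ``easy'' direction, namely that $W(x)$ satisfies $\Phi(W(x),\tau_\gamma)\subseteq W(x)$, and it does so by a direct limit computation $\lim_{t}\txtd(\gamma(t+\xi(y)),\Phi(\Phi(y,\tau_\gamma),t))=0$ showing $\xi(\Phi(y,\tau_\gamma))=\xi(y)$; you obtain this inclusion more cleanly from~\eqref{eq:permutedflow} and $\Phi(x,\tau_\gamma)=x$. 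More substantively, you then supply a converse argument --- that any cross-section $\cN_x\subset\cU$ with $\Phi(\cN_x,\tau_\gamma)\subseteq\cN_x$ must lie inside $W(x)$ --- which the paper does not attempt, even though the wording ``\emph{the} cross-section'' in the statement implicitly appeals to it. Your trapping-plus-interpolation argument for that converse is correct: the discrete orbit $\Phi(y,n\tau_\gamma)$ stays in $\cN_x$ and converges to $\gamma$, hence (since $\overline{\cN_x}\cap\gamma=\{x\}$ and the sequence is eventually bounded near the compact $\gamma$) it converges to $x$, and uniform continuity of $\Phi$ over $[0,\tau_\gamma]$ promotes this to $\txtd(\Phi(y,s),\gamma(s+\xi(x)))\to0$, giving $y\in W(x)$. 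The only imprecision is the final clause ``they agree near $x$'': your argument honestly gives $\cN_x\subseteq W(x)$, not equality, and indeed any smaller cross-section inside $W(x)$ also has the invariance property; so $W(x)$ should be read as the \emph{maximal} such cross-section, and your containment is the correct and sufficient content of the converse. In short: correct throughout, same route on (1)--(2), and on (3) a cleaner forward argument plus a converse the paper leaves implicit.
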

\begin{proof}
The first statement follows from the fact that $\gamma(\xi(x)) = x$ for all $x \in \gamma$ and equation~\eqref{isochronmap} in combination with the definition of $W(x)$: in more detail, we have $y \in W(x)$ if and only if $\lim_{s \to \infty} d(\Phi(x,s), \Phi(y,s)) = 0$ which is equivalent to $\lim_{s \to \infty} d(\gamma(s + \xi(x)), \Phi(y,s)) = 0$ which holds if and only if $\xi(x) = \xi(y)$.

The second statement can be deduced from the invariance property $\Phi(\cdot, t) W(x) = W(\Phi(x,t))$ for any $x \in \gamma$ since it implies for $y \in W(x)$, i.e.~$\xi(y) = \xi(x)$, that
$$ \xi(\Phi(y,t)) = \xi(\Phi(x,t)) = \xi(x) + t \mod \tau_{\gamma} = \xi(y) + t \mod \tau_{\gamma},$$
which is equivalent to the claim.

The third statement can be easily derived from the fact that for all $y \in W^\txts(\gamma)$
\begin{align*}
\lim_{t\ra +\I}\txtd(\gamma(t + \xi(y)),\Phi(\Phi(y,\tau_{\gamma}),t) &= \lim_{t\ra +\I}\txtd(\gamma(t + \xi(y)),\Phi(y,t+ \tau_{\gamma}))\\
 &=\lim_{s\ra +\I}\txtd(\gamma(s - \tau_{\gamma} + \xi(y)),\Phi(y,s))\\
& = \lim_{s\ra +\I}\txtd(\gamma(s + \xi(y)),\Phi(y,s)) = 0\,.
\end{align*} 
This finishes the proof.
\end{proof}

Summarizing, we can view isochrons $W(x)$ as stable manifolds of points on the limit cycle. The sets $W(x)$
are uniquely defined and have codimension one. They locally foliate neighborhoods of the limit
cycle. They can also be characterized and computed as level sets of a specific isochron map whose total derivative along the flow is equal to $1$, by looking for sections
of fixed return time under the flow. In the course of this article, we will transform all the discussed properties to the random case.

Guckenheimer~\cite{Guckenheimer13} tackles additional questions regarding the boundary of $W^\txts(\gamma)$. These questions concern global properties of isochrons. Since we want to first understand a neighbourhood $\cU$ of $\gamma$ in the stochastic setting, we skip these problems here.
With this in mind, we consider an adjustment of the main planar example in~\cite{Guckenheimer13} which does not involve the boundary of $W^\txts(\gamma)$. The example is simple but illuminating and already contains the main aspects of the difficulties in extending isochronicity to the stochastic context, as we will see later.
\begin{ex} \label{ex:Guck}
Consider the ODE
\be
\label{eq:exGuck}
\begin{array}{lcl}
\vartheta' &=& h(r),\\
r' &=&  r(r_1^2-r^2),
\end{array}
\ee
in polar coordinates $(\vartheta,r)\in [0,2\pi)\times (0,+\I)$, where $r_1 > 0$ is fixed, 
$h(r)\geq K > 0$ for some constant $K$, and $h$ is smooth, such that there is always the periodic orbit
$\gamma=\{r=r_1\}$. 
If $h(r)\equiv 1$, then one easily checks that the isochrons of $\gamma$ are (see Figure~\ref{fig:det_isochron_sketch} (a))
\be \label{isochrons_simple}
W((\vartheta_*,r_*))=\{(\vartheta,r):r\in(0,\infty),\vartheta=\vartheta_*\}.
\ee
However, if we consider $h$ such
that $h'(r_1)\neq 0$, then the isochrons bend into \emph{curves}, instead of 
being ``cut-linear'' rays. Indeed, the periodic orbit has period $\tau_{\gamma} = 2\pi/h(r_1)$
but the return time to the same $\vartheta$-coordinate changes near $\gamma$ (see Figure~\ref{fig:det_isochron_sketch} (b)).  
\end{ex}
\begin{figure}[htbp]
        \centering
        \begin{subfigure}{.4\textwidth}
        \centering
  		\begin{overpic}[width=1.0\textwidth]{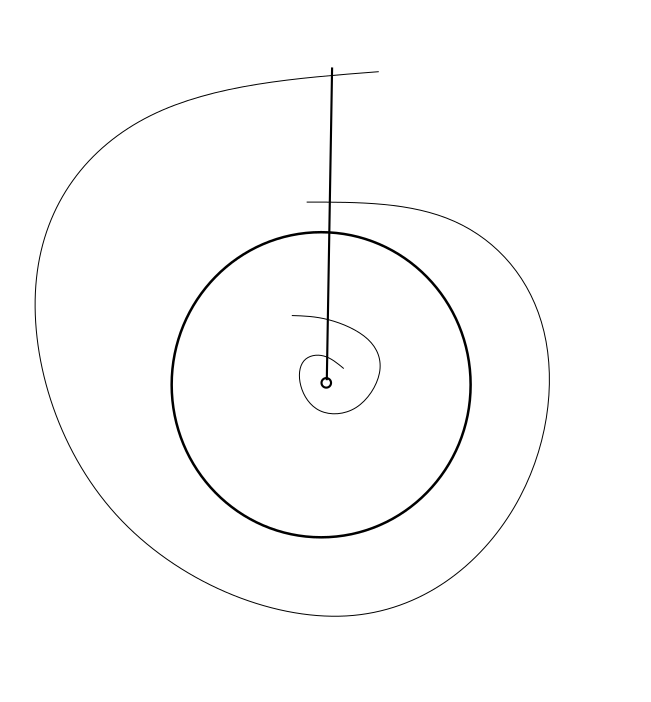}            
                \put(33,31){\small $\gamma$}
                \put(10,65){\footnotesize orbit}
                \put(49,78){\footnotesize isochron}
                \put(49,38){\scriptsize orbit}
        \end{overpic}
        \caption{$h \equiv 1$}
		\end{subfigure}%
		 \begin{subfigure}{.4\textwidth}
        \centering
  		\begin{overpic}[width=1.0\textwidth]{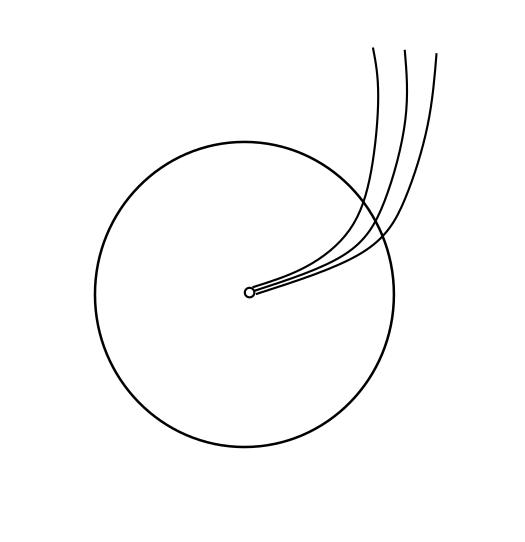}   
  				\put(24,31){\small $\gamma$}
                \put(82,76){\small isochrons}
        \end{overpic}
        \caption{$h'(r_1) \neq 0$}
		\end{subfigure}
\caption{Sketch of isochrons for limit cycle $\gamma$ in example~\eqref{ex:Guck}, with $h \equiv 1$ (a) where the isochrons are simply given by equation~\eqref{isochrons_simple}, and with $h'(r_1) \neq 0$ (b) where the isochrons are curved.}
        \label{fig:det_isochron_sketch}
\end{figure}
Our considerations indicate that, in order to find isochrons in the stochastic case, 
a first approach is to consider ``stable manifolds'' also for this situation. The most suitable framework for this approach turns out to be the one of random dynamical systems (RDS).


\section{Stochastically driven limit cycles in the framework of random dynamical systems} \label{sec:RDS}
In the following, we develop a theory of isochrons within the framework of random dynamical systems. A continuous-time random dynamical system on a topological state space $\cX$ consists of 
\begin{enumerate}
\item[(i)] a model of the noise on a probability space $(\Omega, \mathcal{F}, \mathbb{P})$, formalized as a measurable flow $(\theta_t)_{t \in \mathbb{R}}$ of $\mathbb P$-preserving transformations $\theta_t: \Omega \to \Omega$,
\item[(ii)] a model of the dynamics on $\cX$ perturbed by noise formalized as a \textit{cocycle} $\varphi$ over $\theta$.
\end{enumerate}
This setting is very helpful to understand properties of dynamical systems under the influence of stochastic noise. In technical detail, the definition of a random dynamical system is given as follows \cite[Definition 1.1.2]{a98}.
\begin{defn}[Random dynamical system] \label{def_RDS}
  Let $(\Omega,\mathcal F,\mathbb P)$ be a probability space and $\cX$ be a topological space. A \emph{random dynamical system (RDS)} is a pair of mappings $(\theta, \varphi)$.
  \begin{itemize}
    \item[$\bullet$] The ($\mathcal B(\R)\otimes \mathcal F$, $\mathcal F$)-measurable mapping $\theta: \R\times \Omega\to \Omega$, $(t,\omega)\mapsto \theta_t\omega$, is a \emph{metric dynamical system}, i.e.
    \begin{itemize}
		\item [(i)]	$\theta_0=\id$ and $\theta_{t+s}=\theta_t\circ \theta_s$ for $t,s\in\mathbb R$,
		\item [(ii)] $\mathbb P (A) = \mathbb P(\theta_t A)$ for all $A\in \mathcal F$ and $t\in\mathbb R$.
	\end{itemize}	
    \item[$\bullet$]
	The ($\mathcal{B}(\mathbb{R})\otimes \mathcal{F} \otimes \mathcal{B}(\cX)$, $\mathcal{B}(\cX)$)-measurable mapping $\varphi: \mathbb{R} \times \Omega \times \cX \to  \cX, (t, \omega, x) \mapsto \varphi(t, \omega, x)$, is a cocycle over $\theta$, i.e.
	\[
	\varphi(0, \omega, \cdot) = \id \quad \text{and} \quad \varphi(t+s, \omega, \cdot) = \varphi(t, \theta_s \omega, \varphi(s, \omega, \cdot))  \quad \text{ for all } \omega \in \Omega \text{ and } t, s \in \mathbb{R}\,.
	\]
  \end{itemize}
The random dynamical system $(\theta,\varphi)$ is called continuous if $(t, x) \mapsto \varphi(t, \omega,x)$ is continuous for every $\omega \in \Omega$. We still speak of a random dynamical system, if its cocycle is only defined in forward time, i.e.~if the mapping $\varphi$ is only defined on $\mathbb{R}_0^+ \times \Omega \times \cX$. We will make it noticeable whenever this is the case.
\end{defn}
In the following, the metric dynamical system $(\theta_t)_{t \in \mathbb{R}}$ is often even ergodic, i.e.~any $A\in\mathcal F$ with $\theta_t^{-1}A =A$ for all $t\in\mathbb R$ satisfies $\mathbb P(A)\in \{0,1\}$.
Note that we define $\theta$ in two-sided time whereas $\varphi$ can be restricted to one-sided time. This is motivated by the fact that a large part of this article will deal with random dynamical systems generated by stochastic differential equations (SDEs).
Hence, we are interested in random dynamical systems adapted to a suitable filtration and of white noise type (see Appendix~\ref{sec:RDSSDE}). In this context, we can understand $\varphi$ as the ``stochastic flow'' induced by solving the corresponding SDE and $\theta_t$ as a time shift on the canonical space $\Omega$ of all continuous paths starting at $0$, equipped with the Wiener measure.

Additionally note that the RDS generates a \emph{skew product flow}, i.e.~a family of maps $(\Theta_t)_{t \in \mathbb{T}}$ from $\Omega \times \cX $ to itself such that for all $t \in \mathbb T$ and $\omega \in \Omega, x \in \cX$
\begin{equation} \label{eq:skewproductflow}
\Theta_t(\omega, x) = (\theta_t \omega, \varphi(t, \omega,x))\,.
\end{equation}
\subsection{Differentiability and Lyapunov exponents}
\label{sec:differentiability}
The random dynamical system $(\theta, \varphi)$ is called $C^k$ if $\varphi(t, \omega, \cdot) \in C^k$ for all $t\in\mathbb T$ and $\omega\in\Omega$, where again $ \mathbb T \in \{ \R, \R_0^+\}$. As in the deterministic case, let us assume that the state space is $\cX = \mathbb{R}^m$ (the following can also be extended to smooth $m$-dimensional manifolds as in Appendix~\ref{LyapSpec}) and that $(\theta, \varphi)$ is  $C^1$. The \textit{linearization} or \textit{derivative} $\rmD \varphi(t,\omega,x)$ of $\varphi(t,\omega,\cdot)$ at $x \in \mathbb{R}^m$ is the Jacobian $m\times m$ matrix
$$ \rmD \varphi(t,\omega,x) = \frac{\partial \varphi(t, \omega,x)}{\partial x}\,.$$
Differentiating the equation
$$ \varphi(t+s,\omega,x) = \varphi(t, \theta_s \omega, \varphi(s,\omega,x))$$
on both sides and applying the chain rule to the right hand side yields
$$ \rmD \varphi(t+s,\omega,x) = \rmD \varphi(t, \theta_s \omega, \varphi(s,\omega,x))\rmD \varphi(s,\omega,x) =  \rmD \varphi(t, \Theta_s(\omega,x))\rmD \varphi(s,\omega,x)\,,$$
i.e.~the cocycle property of the fiberwise mappings with respect to the skew product maps $(\Theta_t)_{t \in \mathbb T}$ (see equation~\eqref{eq:skewproductflow}). Let us further assume that the random dynamical system possesses an \emph{invariant measure} $\mu$ (see Appendix~\ref{sec:invariant_measures}). This implies that $(\Theta,\rmD \varphi)$ is a random dynamical system with linear cocycle $\rmD \varphi$ over the metric dynamical system $(\Omega \times \mathbb{R}^m, \mathcal F \times \mathcal{B}(\mathbb{R}^m), (\Theta_t)_{t \in \mathbb T})$, see e.g. \cite[Proposition 4.2.1]{a98}. 

The main models in this article are stochastic differential equation in Stratonovich form
\begin{equation} \label{SDEgen}
\rmd X_t = b(X_t) \rmd t + \sum_{i=1}^n \sigma_i(X_t) \circ \rmd W_t^i,\qquad X_0=x \in \mathbb{R}^m,
\end{equation}
where $W_t^i$ are independent real valued Brownian motions, $b$ is a $C^k$ vector field, $k \geq 1$, and $\sigma_1, \dots, \sigma_n$ are $C^{k+1}$ vector fields satisfying bounded growth conditions, as e.g. (global) Lipschitz continuity, in all derivatives to guarantee the existence of a (global) random dynamical system for $\varphi$ and $\rmD \varphi$. We write the equation in Stratonovich form when differentiation is concerned as the classical rules of calculus are preserved. We can apply the conversion formula to the It\^{o} integral to obtain the situation of $\eqref{SDE_RDS}$.
According to \cite{as95}, the derivative $\rmD \varphi(t,\omega,x)$ applied to an initial condition $v_0 \in \mathbb{R}^m$ solves uniquely the variational equation given by
\begin{equation}\label{varproblem}
  \rmd v = \rmD b(\varphi(t,\omega)x) v \,\rmd t + \sum_{i=1}^n \rmD \sigma_i(\varphi(t,\omega)x) v \circ \rmd W_t^i \,, \quad \text{where } v \in \mathbb{R}^m\,.
\end{equation}

The hyperbolicity of such a differentiable RDS with ergodic invariant measure $\mu$ and random cycle $A$ is expressed via its Lyapunov spectrum which is given due to the Multiplicative Ergodic Theorem (MET) (see Theorem~\ref{MET} in Appendix~\ref{LyapSpec}) under the integrability assumption
\begin{equation} \label{integrabilityassumpt}
\sup_{0 \leq t \leq 1} \log^+ \| \rmD \varphi(t, \omega, \cdot) \| \in L^1(\mu),
\end{equation}
where $\| \rmD\varphi(t, \omega, \cdot) \| $ denotes the operator norm of the Jacobian as a linear operator from $\txtT_x \mathbb{R}^m$ to $\txtT_{\varphi(t, \omega,x)} \mathbb{R}^m $ induced by the Euclidean norm and $ \log^+(a) = \max \{\log(a);0\}$.

Analogously to the characteristic exponents discussed for the deterministic case in Section~\ref{sec:deterministiccase}, the spectrum of $p \leq m$ \emph{Lyapunov exponents} $\lambda_1 > \lambda_2 > \dots > \lambda_p$ quantifies the asymptotic exponential rates of infinitesimally close trajectories.
\subsection{Random attractors} \label{sec:randomattractors}
Let $(\theta, \varphi)$ be a white noise random dynamical system on $\mathbb{R}^m$. (Note that the following can be formulated more generally in complete metric spaces $(\cX, d)$ but that we again restrict ourselves to the Euclidean case for reasons of clarity). Due to the non-autonomous nature of the RDS, there are no fixed attractors for dissipative systems and different notions of a random attractor exist.
We introduce these related but different definitions of random attractors in the following, with respect to tempered sets. 
Specific random attractors, \emph{attracting random cycles}, will play a crucial role in the following chapters. A random variable $R:\Omega\rightarrow \mathbb{R}$ is called \emph{tempered} if
\[
\lim_{t \to \pm \infty} \frac{1}{|t|} \ln^{+} R(\theta_t\omega)=0 \faa \omega\in \Omega\,,
\]
see also \cite[p.~164]{a98}. A set $D\in \mathcal F\otimes \mathcal B(\R^m)$ is called \emph{tempered} if there exists a tempered random variable $R$ such that
\[
  D(\omega)\subset B_{R(\omega)}(0) \faa \omega\in\Omega\,,
\]
where $B_{R(\omega)}(0)$ denotes a ball centered at zero with radius $R(\omega)$ and $D(\omega):=\{x\in  \R^m: (\omega, x)\in D\}$. $D$ is called compact if $D(\omega)\subset \R^m$ is compact for almost all $\omega\in\Omega$.
Denote by $\mathcal D$ the set of all compact tempered sets $D\in \mathcal F\otimes \mathcal B(\R^m)$ and by
$$\operatorname{dist}(E, F):= \sup_{x\in E}\inf_{y\in F} d(x,y)$$
the \emph{Hausdorff seperation} or \emph{semi-distance}, where $d$ denotes again the Euclidean metric. 
We now define different notions of a random attractor with respect to a family of sets $\mathcal S \subset \mathcal D$, see also \cite[Definition~14.3]{rk04} and \cite[Definition 15]{crauelkloeden15}.
\begin{defn}[Random attractor] \label{randomattractor}
   The set $A \in\mathcal S \subset \mathcal D$ that is strictly $\varphi$-invariant, i.e.
   \begin{equation*}
		\varphi(t,\omega) A(\omega) = A (\theta_t  \omega) \fa t\ge 0\mbox{ and almost all } \omega \in \Omega\,,
		\end{equation*}
is called  
	\begin{enumerate}
		\item[(i)] a \emph{random pullback attractor} with respect to $\mathcal S$ if for all $D \in \mathcal S$ we have
		\begin{equation*}
		\lim_{t \to \infty} \operatorname{dist} \big(\varphi(t, \theta_{-t} \omega)D(\theta_{-t}\omega), A(\omega)\big) = 0 \faa \omega\in\Omega\,,
		\end{equation*}
        \item[(ii)] a \emph{random forward attractor} with respect to $\mathcal S$ if for all $D \in \mathcal S$ we have
		\begin{equation*}
		\lim_{t \to \infty} \operatorname{dist} \big(\varphi(t, \omega)D(\omega), A(\theta_t\omega)\big) = 0 \faa \omega\in\Omega\,,
		\end{equation*}
		\item[(iii)] a \emph{weak random attractor} if it satisfies the convergence property in (i) (or (ii)) with almost sure convergence replaced by convergence in probability,
		\item[(iv)] a \emph{(weak) random (pullback or forward) point attractor} if it satisfies the corresponding properties above for $\mathcal{S}= \{ D \subset \R^m \,: \, D = \{y\}  \text{ for some } y \in \R^m\}$, i.e.~for single points $y \in \R^m$.
	\end{enumerate}
\end{defn}
Note that due to the $\mathbb{P}$-invariance of $\theta_t$ for all $t \in \mathbb{R}$, it is easy to derive that weak attraction in the pullback and the forward sense are the same and, hence, the notion of a weak random attractor in Definition~\ref{randomattractor}~(iii) is consistent. However, random pullback attractors and random forward attractors with almost sure convergence, as defined above, are generally not equivalent (see \cite{Scheutzow02} for counter-examples). In the following, we will be careful with this distinction, yet in our main examples the random pullback attractor and random forward attractor will be the same. In this case we will simply speak of \emph{the random attractor}.

Before we introduce random cycles and random periodic solutions, we add some remarks on Definition~\ref{randomattractor}.
\begin{remark}
Note that we require that the random attractor is measurable with respect to $\mathcal F\otimes \mathcal B(\R^m)$, in contrast to a weaker statement often used in the literature (see also \cite[Remark~4]{crauelkloeden15}).
\end{remark}
\begin{remark}
In many cases, the family of sets $\mathcal{S}$ is chosen to be the family of all bounded or compact (deterministic) subsets $B \subset \R^m$, as for example in \cite{fgs16}. Note that our definition of random attractors is a generalization of this weaker definition.
\end{remark}

\subsubsection{Attracting random cycles and random periodic solutions}
Consider a random dynamical system $(\theta, \varphi)$ on $\R^m$. In the situation of a deterministic limit cycle, the limit cycle is the attractor for all subsets of a neighbourhood of this attractor. Analagously, we give the following definition for the random setting.
\begin{defn}[Attracting Random Cycle] \label{defn:random_limit_cycle}
We call a random (forward or pullback) attractor $A$ for $(\theta, \varphi)$, with respect to a collection of sets $\mathcal{S}$, an \emph{attracting random cycle} if for almost all $\omega \in \Omega$ we have $A(\omega) \cong S^1$, i.e. every fiber is homeomorphic to the circle.
\end{defn}
Furthermore, we need to find a stochastic analogue to the limit cycle as a periodic orbit. Firstly, we follow \cite{ZhaoZheng09} for introducing the notion of random periodic solutions: 

\begin{defn}[Random periodic solution] \label{RPS_def}
Let $\mathbb{T} \in \{ \R, \R_0^+ \}$.
A \emph{random periodic solution} is an $\mathcal{F}$-measurable periodic function $\psi : \Omega \times \mathbb{T} \to \R^m$ of period $T>0$ such that for all $\omega \in  \Omega$
\begin{equation} \label{RPS}
\psi( t + T, \omega) = \psi(t, \omega) \ \text{ and } \ \varphi(t,\omega, \psi(t_0,\omega )) = \psi(t + t_0,\theta_t \omega ) \ \text{ for all } t,t_0 \in \mathbb{T}\,.
\end{equation}
\end{defn}
Note that this definition assumes that $T\in\R$ does not depend on the noise realization $\omega$. We will see the limitations of that concept in Example~\ref{ex:nonfixedphase}, extending the following example which we introduce first.

\begin{ex} \label{ex:basic_fixed_per}
Similarly to \cite{ZhaoZheng09}, consider the planar stochastic differential equation
\be
\label{eq:exHopf}
\begin{array}{lcl}
\rmd x &=& \left( x - y - x\left(x^2 + y^2\right) \right)\, \rmd t + \sigma x \circ \rmd W_t\,,\\
\rmd y &=&  \left( x + y - y\left(x^2 + y^2\right) \right)\, \rmd t + \sigma y \circ \rmd W_t\,.
\end{array}
\ee
where $\sigma \geq 0$, $W_t$ denotes a one-dimensional standard Brownian motion and the noise is of Stratonovich type. 
We denote the cocycle of the induced random dynamical system by $\varphi = (\varphi_1, \varphi_2)$.
Equation~\eqref{eq:exHopf} can be transformed into polar coordinates $(\vartheta, r) \in [0, 2 \pi) \times [0, \infty)$
\be
\label{eq:exzz}
\begin{array}{lcl}
\rmd \vartheta &=& 1 \, \rmd t,\\
\rmd r &=&  (r - r^3) \, \rmd t + \sigma r \circ \, \rmd W_t\,.
\end{array}
\ee
Therefore, in the situation without noise ($\sigma = 0$), the system is as in Example~\ref{ex:Guck} with $h\equiv 1$ and attracting limit cycle at radius $r=1$.
With noise switched on ($\sigma > 0$),
equation~\eqref{eq:exzz} has an explicit unique solution given by
$$ \hat \varphi(t, \omega, (\vartheta_0, r_0)) = \left(\vartheta_0 + t \mod 2 \pi, \ \frac{r_0 \txte^{t + \sigma W_t(\omega)}}{\left( 1 + 2 r_0^2 \int_0^t e^{2(s + \sigma W_s(\omega))} \rmd s \right)^{1/2} } \right) =: ( \vartheta(t, \omega, \vartheta_0), r(t,\omega, r_0)) \,.$$
Moreover, there is a stationary solution for the radial component, satisfying $r(t, \omega, r^*(\omega)) = r^*(\theta_t \omega)$, and given by
\begin{equation} \label{eq:rstar}
r^*(\omega) = \left( 2 \int_{-\infty}^{0} \txte^{2s + 2 \sigma W_s(\omega)} \rmd s \right)^{-1/2}\,.
\end{equation}
Furthermore, one can see from a straightforward computation that for all $(x,y) \neq (0,0)$ and almost all $\omega \in \Omega$
$$ \left(\varphi_1(t, \theta_{-t} \omega,x)^2 + \varphi_2(t, \theta_{-t} \omega, y)^2\right)^{1/2} \to r^* (\omega)\ \text{ as } t \to \infty \,,$$
and also
$$ \left(\varphi_1(t, \omega,x)^2 + \varphi_2(t,\omega, y)^2\right)^{1/2} \to r^* (\theta_t \omega)\ \text{ as } t \to \infty \,.$$
Hence, the planar system~\eqref{eq:exHopf} has a random attractor $A$ in the pullback and forward sense, with respect to $\mathcal{S} =\mathcal{D} \setminus \{ \{0\} \}$, where $\mathcal D$ denotes the set of all compact tempered sets $D\in \mathcal F\otimes \mathcal B(\R^2)$ (see also Section~\ref{appendix:randomattractor}), and the fibers of $A$ are given by (see Figure~\ref{fig:attractors})
\be \label{randomattractor_ex}
 A(\omega) = \{ r^* (\omega)(\cos \alpha, \sin \alpha) \,: \, \alpha \in [0, 2 \pi) \}.
 \ee
The system possesses, for any fixed $\vartheta_0 \in [0, 2 \pi)$, the random periodic solution $\psi$ which is defined by
$$ \psi(t, \omega) = r^* (\omega)(\cos ( \vartheta_0 + t), \sin (\vartheta_0 +t))\,.$$
Indeed, it is easy to check that  $\psi(t, \omega)  = \psi(t + 2 \pi, \omega) $ and
$ \varphi(t, \omega, \psi(t_0, \omega)) = \psi(t + t_0, \theta_t \omega)$ for all $t, t_0 \geq 0$.
\end{ex}

\begin{ex}\label{ex:nonfixedphase}
(a) Now consider a stochastic version of Example~\ref{ex:Guck} when the phase dynamics depends on the amplitude, i.e.
\be
\label{eq:exzz2}
\begin{array}{lcl}
\rmd \vartheta &=& h(r) \, \rmd t,\\
\rmd r &=&  (r - r^3) \, \rmd t + \sigma r \circ \, \rmd W_t\,,
\end{array}
\ee
where the smooth function $h: \R \to \R$ with $h \geq K_h > 0$ is non-constant.
The random attractor $A$ for the corresponding planar system 
\be
\label{eq:exzz2planar}
\begin{array}{lcl}
\rmd x &=& \left( x - h\left( \sqrt{x^2 + y^2}\right)y - x\left(x^2 + y^2\right) \right)\, \rmd t + \sigma x \circ \rmd W_t\,,\\
\rmd y &=&  \left( h\left( \sqrt{x^2 + y^2}\right) x + y - y\left(x^2 + y^2\right) \right)\, \rmd t + \sigma y \circ \rmd W_t\,.
\end{array}
\ee
is exactly the same as before, as illustrated in Figure~\ref{fig:attractors}. 
We observe for a point $ a(\omega) : = r^*(\omega)(\cos \vartheta_0, \sin \vartheta_0) \in A(\omega)$, where $r^*$ is the random variable defined in equation~\eqref{eq:rstar} and $ \vartheta_0 \in [0, 2 \pi)$, that the cocycle satisfies
$$ \varphi(t, \omega, a(\omega)) = r^*(\theta_t \omega)\left(\cos \left( \vartheta_0 + \int_0^t h(r^*(\theta_s \omega)) \rmd s \right),\, \sin \left( \vartheta_0 + \int_0^t h(r^*(\theta_s \omega)) \rmd s\right) \right)\,.$$
There cannot be a random periodic solution in the sense of Definition~\ref{RPS_def}, since noise-independent periodicity is not possible if $h$ is non-constant. 

\medskip
\noindent (b) Naturally, we can also consider the case where the phase amplitude is additionally perturbed by noise, i.e.
\be
\label{eq:exzz2_noise}
\begin{array}{lcl}
\rmd \vartheta &=& h(r) \, \rmd t + \tilde h(r) \circ \, \rmd W_t^2,\\
\rmd r &=&  (r - r^3) \, \rmd t + \sigma r \circ \, \rmd W_t^1\,,
\end{array}
\ee
where $W_t = (W_t^1, W_t^2)$ is now two-dimensional Brownian motion and $h, \tilde h: \R \to \R$ are smooth functions. 
\end{ex}
\begin{figure}[htpb]
\centering
\begin{subfigure}[b]{.25\textwidth}
  \centering
  \begin{overpic}[width=1\linewidth]{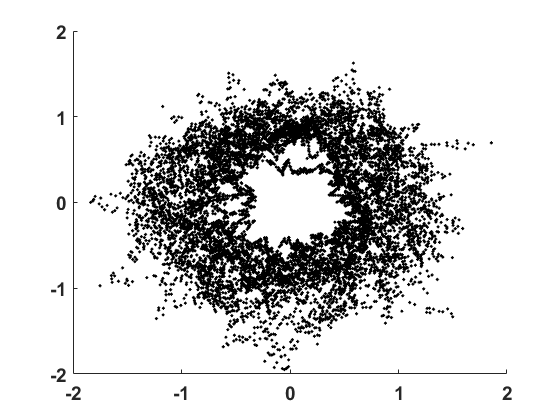}           
                \put(75,0){\footnotesize $x$}
                \put(5,60){\footnotesize $y$}
        \end{overpic}
  \caption{$T=0$}
\end{subfigure}%
\begin{subfigure}[b]{.25\textwidth}
  \centering
\begin{overpic}[width=1\linewidth]{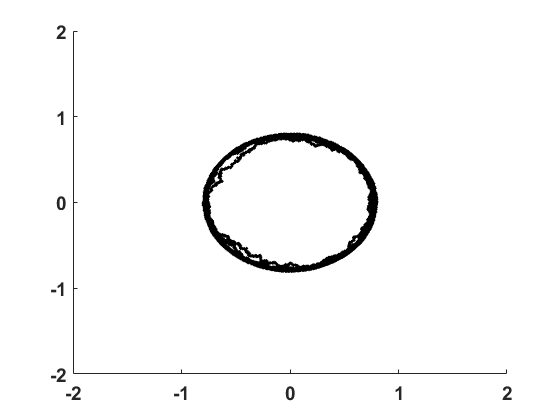}           
                \put(75,0){\footnotesize $x$}
                \put(5,60){\footnotesize $y$}
        \end{overpic}
  \caption{$T=1$}
\end{subfigure}%
\begin{subfigure}[b]{.25\textwidth}
  \centering
\begin{overpic}[width=1\linewidth]{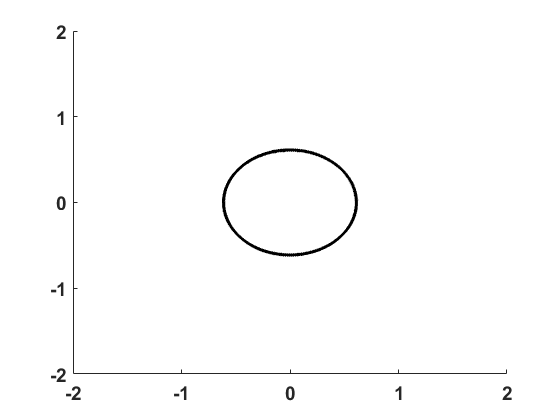}           
                \put(75,0){\footnotesize $x$}
                \put(5,60){\footnotesize $y$}
        \end{overpic}
  \caption{$T=5$}
\end{subfigure}%
\begin{subfigure}[b]{.25\textwidth}
  \centering
  \begin{overpic}[width=1\linewidth]{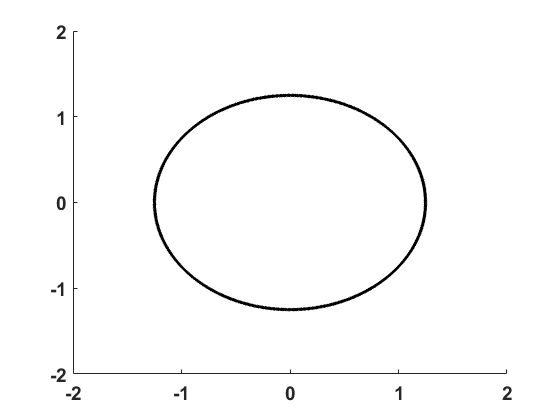}           
                \put(75,0){\footnotesize $x$}
                \put(5,60){\footnotesize $y$}
        \end{overpic}
  \caption{$T=10$}
\end{subfigure}%
\hfill
\begin{subfigure}{.25\textwidth}
  \centering
  \begin{overpic}[width=1\linewidth]{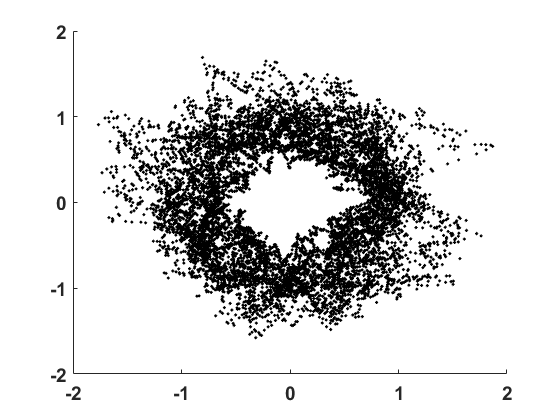}          
                \put(75,0){\footnotesize $x$}
                \put(5,60){\footnotesize $y$}
        \end{overpic}
  \caption{$T=0$}
\end{subfigure}%
\begin{subfigure}{.25\textwidth}
  \centering
    \begin{overpic}[width=1\linewidth]{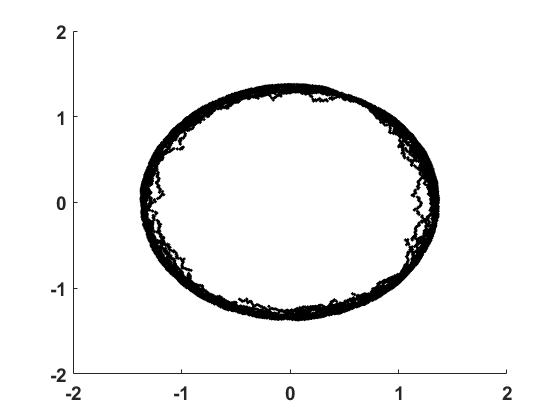}          
                \put(75,0){\footnotesize $x$}
                \put(5,60){\footnotesize $y$}
        \end{overpic}
  \caption{$T=-1$}
\end{subfigure}%
\hfill
\begin{subfigure}{.25\textwidth}
  \centering
    \begin{overpic}[width=1\linewidth]{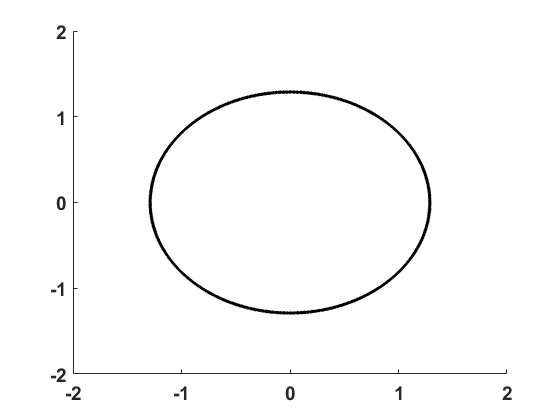}          
                \put(75,0){\footnotesize $x$}
                \put(5,60){\footnotesize $y$}
        \end{overpic}
  \caption{$T=-5$}
\end{subfigure}%
\begin{subfigure}{.25\textwidth}
  \centering
    \begin{overpic}[width=1\linewidth]{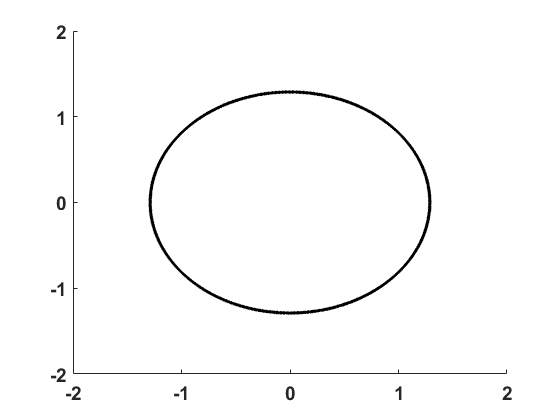}          
                \put(75,0){\footnotesize $x$}
                \put(5,60){\footnotesize $y$}
        \end{overpic}
  \caption{$T=-10$}
\end{subfigure}%
\caption{
Numerical simulations in $(x,y)$-coordinates, using Euler-Marayama integration with step size $\rmd t =10^{-2}$, of forward and pullback dynamics of system \eqref{eq:exHopf} for a set $B$ of initial conditions generated by a trajectory of~\eqref{eq:exHopf} ((a) and (e)). In (b)--(d), we show the numerical approximation of $\varphi(T,\omega, B)$ for some $\omega \in \Omega$, approaching the fiber $A(\theta_T \omega)$ of the random attractor, changing in forward time. In (f)--(h), we show the numerical approximation of $\varphi(-T,\theta_{-T} \omega, B)$ for some $\omega \in \Omega$, approaching the fiber $A(\omega)$ of the random attractor, fixed by the pullback mechanism.}
\label{fig:attractors}
\end{figure}

Example~\ref{ex:nonfixedphase} motivates us to introduce the following notion of a more general form of random periodic solution. The potential relevance of finding such a generalization was first discussed by Hans Crauel\footnote{Through personal communication.}; hence, we have chosen the name.

\begin{defn}[Crauel random periodic solution] \label{def:CRPS}
Let $\mathbb{T} \in \{ \R, \R_0^+ \}$.
A \emph{Crauel random periodic solution} (CRPS) is a pair $(\psi,T)$ consisting of $\mathcal{F}$-measurable functions $\psi : \Omega \times \mathbb{T} \to \R^m$ and $T : \Omega \to \R$ such that for all $\omega \in  \Omega$
\begin{equation} \label{eq:RPS_gen}
\psi( t, \omega) = \psi(t + T(\theta_{-t} \omega), \omega) \ \text{ and } \ \varphi(t,\omega, \psi(t_0,\omega )) = \psi(t + t_0,\theta_t \omega ) \ \text{ for all } t,t_0 \in \mathbb{T}\,.
\end{equation}
\end{defn}
In particular, note that condition~\eqref{eq:RPS_gen} implies $\psi(0, \omega) = \psi(T(\omega), \omega)$ (see Figure~\ref{fig:rps_sketch} for further details). Furthermore, observe that the classical random periodic solution according to Definition \ref{RPS_def} is simply a Crauel random periodic solution with constant $T$. We show that Definition~\ref{def:CRPS} applies to system~\eqref{eq:exzz2}, demonstrating the suitability of this definition.

\begin{prop} \label{prop:crps_ex}
(a) The planar system associated with~\eqref{eq:exzz2} has a family of Crauel random periodic solutions $(\psi_{\vartheta},T)$ which is defined for every $\vartheta \in [0, 2 \pi)$ by
\begin{equation} \label{eq:CRPS_ex}
 \psi_{\vartheta}(t, \omega) = r^* (\omega)\left(\cos \left( \vartheta + \int_{-t}^0 h(r^*(\theta_s \omega)) \rmd s \right), \ \sin \left( \vartheta + \int_{-t}^0 h(r^*(\theta_s \omega)) \rmd s \right)\right)\,,
\end{equation}
and 
\begin{equation} \label{eq:int_Tomega}
\int_{-T( \omega)}^{0} h(r^*(\theta_s \omega)) \rmd s = 2 \pi\,,
\end{equation} 
for almost all $\omega \in \Omega$ and all $ t \in \R_0^+$.

(b) The system associated with~\eqref{eq:exzz2_noise} has a family of Crauel random periodic solutions $(\psi_{\vartheta},T)$ which is defined for every $\vartheta \in [0, 2 \pi)$ by $\psi_{\vartheta}$ analogously to \eqref{eq:CRPS_ex}, just adding $\int_{-t}^{0} \tilde h(r^*(\theta_s \omega)) \circ \,  \rmd W_s^2(\omega)$ to the angular direction,
and 
\begin{equation} \label{eq:int_Tomega_noise}
T(\omega) = \inf \left\{ t > 0: \left|\int_{-t}^{0} h(r^*(\theta_s \omega)) \rmd s +  \int_{-t}^{0} \tilde h(r^*(\theta_s \omega)) \circ \,  \rmd W_s^2(\omega)\right| = 2 \pi \right\}\,.
\end{equation} 
for almost all $\omega \in \Omega$ and all $ t \in \R_0^+$.
\end{prop}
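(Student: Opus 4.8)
The plan is to verify the two defining properties of a Crauel random periodic solution in Definition \ref{def:CRPS} directly from the explicit solution formulas already available in Examples \ref{ex:basic_fixed_per} and \ref{ex:nonfixedphase}. The starting point is the stationarity of the radial component, $r(t,\omega,r^*(\omega)) = r^*(\theta_t\omega)$, which was established in Example \ref{ex:basic_fixed_per} for \eqref{eq:exzz} and carries over verbatim to \eqref{eq:exzz2} and \eqref{eq:exzz2_noise} since the radial equation is unchanged. Consequently, for any initial angle $\vartheta_0$, the cocycle acting on the point $r^*(\omega)(\cos\vartheta_0,\sin\vartheta_0)$ advances the angle by $\int_0^t h(r^*(\theta_s\omega))\,\rmd s$ (plus the stochastic integral $\int_0^t \tilde h(r^*(\theta_s\omega))\circ\rmd W_s^2$ in case (b)), while scaling the radius to $r^*(\theta_t\omega)$; this is exactly the displayed cocycle formula in Example \ref{ex:nonfixedphase}(a).

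First I would check the cocycle--intertwining identity $\varphi(t,\omega,\psi_\vartheta(t_0,\omega)) = \psi_\vartheta(t+t_0,\theta_t\omega)$. Writing $\psi_\vartheta(t_0,\omega) = r^*(\omega)(\cos\alpha,\sin\alpha)$ with $\alpha = \vartheta + \int_{-t_0}^0 h(r^*(\theta_s\omega))\,\rmd s$, applying the cocycle adds $\int_0^t h(r^*(\theta_u\omega))\,\rmd u$ to $\alpha$ and sends the radius to $r^*(\theta_t\omega)$. A change of variables $u = s + t$ in the first integral turns $\int_0^t h(r^*(\theta_u\omega))\,\rmd u$ into $\int_{-t}^0 h(r^*(\theta_{s}(\theta_t\omega)))\,\rmd s$, and combining with the shifted $\int_{-t_0}^0$ term and re-indexing gives the angle $\vartheta + \int_{-(t+t_0)}^0 h(r^*(\theta_s(\theta_t\omega)))\,\rmd s$, which is precisely the angle of $\psi_\vartheta(t+t_0,\theta_t\omega)$; the radius matches as well. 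In case (b) the same manipulation must be applied to the Stratonovich integral term, using the flow property of $\theta$ and the standard shift relation for stochastic integrals along the metric dynamical system.

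Next I would verify the periodicity relation $\psi_\vartheta(t,\omega) = \psi_\vartheta(t + T(\theta_{-t}\omega),\omega)$. Since the radius $r^*(\omega)$ in $\psi_\vartheta(t,\omega)$ does not depend on $t$, this reduces to showing that the angle increment over the extra time $T(\theta_{-t}\omega)$ is a multiple of $2\pi$, i.e.\ that $\int_{-t-T(\theta_{-t}\omega)}^{-t} h(r^*(\theta_s\omega))\,\rmd s = 2\pi$ (with the stochastic term added in case (b)). After the substitution $s \mapsto s - t$ this integral becomes $\int_{-T(\theta_{-t}\omega)}^{0} h(r^*(\theta_s(\theta_{-t}\omega)))\,\rmd s$, which equals $2\pi$ by the defining equation \eqref{eq:int_Tomega} (resp.\ \eqref{eq:int_Tomega_noise}) applied at the noise realization $\theta_{-t}\omega$. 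It remains to confirm that $T$ is well-defined and $\mathcal{F}$-measurable: in case (a), since $h \ge K_h > 0$, the map $t \mapsto \int_{-t}^0 h(r^*(\theta_s\omega))\,\rmd s$ is continuous, strictly increasing, zero at $t=0$ and diverges to $+\infty$, so \eqref{eq:int_Tomega} has a unique solution $T(\omega)>0$ depending measurably on $\omega$; in case (b), $T(\omega)$ is a hitting time of a continuous adapted process and hence measurable, with the infimum finite almost surely by recurrence of the driving noise. Measurability of $\psi_\vartheta$ in $(\omega,t)$ follows from continuity in $t$ and measurability of $\omega \mapsto r^*(\omega)$ together with measurability of the (stochastic) integral.

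The main obstacle is case (b): one must handle the Stratonovich integral $\int_{-t}^0 \tilde h(r^*(\theta_s\omega))\circ\rmd W_s^2$ carefully under the shift $\theta_t$, ensuring that the cocycle identity and the periodicity relation survive the reindexing of stochastic integrals, and one must argue that the hitting time in \eqref{eq:int_Tomega_noise} is almost surely finite --- here the absolute value and the presence of the martingale part mean monotonicity is lost, so finiteness relies on an oscillation/recurrence argument for the combined drift-plus-noise angular process rather than on the simple monotonicity available in case (a). Everything else is a routine change-of-variables bookkeeping exercise built on the explicit formulas.
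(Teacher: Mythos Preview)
Your proposal is correct and follows essentially the same approach as the paper: direct verification of the two defining properties of a CRPS via change of variables in the angular (stochastic) integrals, together with a check that $T(\omega)$ is well-defined. The only differences are cosmetic---you verify the cocycle identity before the periodicity relation (the paper does it the other way around), and for well-definedness of $T$ in (a) you invoke strict monotonicity of $t\mapsto\int_{-t}^0 h(r^*(\theta_s\omega))\,\rmd s$ whereas the paper uses the intermediate value theorem on $[0,2\pi/K_h]$.
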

\begin{proof}
Without loss of generality let $\vartheta=0$.

(a) The fact that $T: \Omega \to \mathbb{R}$ is well defined can be seen as follows: fix $\omega \in \Omega$ and let 
$$g_{\omega}(t) = \int_{-t}^{0} h(r^*(\theta_s \omega)) \rmd s - 2 \pi.$$
Then $g_{\omega}(0) < 0$ and $g_{\omega}(2 \pi/K_h) > 0$ and, hence, the existence of $T(\omega)$ follows from the intermediate value theorem. Moreover, we have by a change of variables that
$$  2 \pi = \int_{-T( \theta_{-t} \omega)}^{0} h(r^*(\theta_{s-t} \omega)) \rmd s =\int_{-(t + T(\theta_{-t} \omega))}^{-t} h(r^*(\theta_s \omega)) \rmd s \,.$$
We use this observation to conclude that for almost all $\omega \in \Omega$ and any $t \geq 0$
\begin{align*}
\psi(t + T(\theta_{-t} \omega), \omega) &= r^* (\omega)\left(\cos \left( \int_{-(t + T(\theta_{-t} \omega))}^{0} h(r^*(\theta_s \omega)) \rmd s \right), \ \sin \left(  \int_{-(t + T(\theta_{-t} \omega))}^{0} h(r^*(\theta_s \omega)) \rmd s\right)\right) \\
&= r^* (\omega)\left(\cos \left( 2 \pi + \int_{-t }^{0} h(r^*(\theta_s \omega)) \rmd s \right), \ \sin \left(2 \pi +   \int_{-t}^{0} h(r^*(\theta_s \omega)) \rmd s\right)\right) \\
&=  \psi(t, \omega)\,.
\end{align*}
Furthermore, we observe that for almost all $\omega \in \Omega$ and $t, t_0 \geq 0$
\begin{align*}
\varphi(t, \omega,\psi(t_0, \omega)) &= r^* (\theta_t \omega)\left(\cos \left( \int_{-t_0}^{t} h(r^*(\theta_s \omega)) \rmd s \right), \ \sin \left(  \int_{-t_0}^{t} h(r^*(\theta_s \omega)) \rmd s\right)\right) \\
&= r^* (\theta_t \omega)\left(\cos \left( \int_{-t_0 -t}^{0} h(r^*(\theta_{s+t} \omega)) \rmd s \right), \ \sin \left(  \int_{-t_0 - t}^{0} h(r^*(\theta_{s+t} \omega)) \rmd s\right)\right)  \\
&=  \psi(t + t_0, \theta_t \omega)\,.
\end{align*}
(b) The fact that $T: \Omega \to \mathbb{R}$ is well defined almost surely in this case follows directly from the properties of SDEs on compact intervals, in this case $[-2 \pi, 2 \pi]$.
Moreover, we have by a change of variables that
\begin{align*}
 2 \pi &= \left|\int_{-T( \theta_{-t} \omega)}^{0} h(r^*(\theta_{s-t} \omega)) \rmd s +  \int_{-T( \theta_{-t} \omega)}^{0} \tilde h(r^*(\theta_{s-t} \omega)) \circ \,  \rmd W_s^2(\theta_{-t} \omega)\right| \\
 &= \left|\int_{-T( \theta_{-t} \omega)}^{0} h(r^*(\theta_{s-t} \omega)) \rmd s +  \int_{-T( \theta_{-t} \omega)}^{0} \tilde h(r^*(\theta_{s-t} \omega)) \circ \,  \rmd W_{s-t}^2( \omega)\right| \\
 &= \left| \int_{-(t + T(\theta_{-t} \omega))}^{-t} h(r^*(\theta_s \omega)) \rmd s +  \int_{-(t+T( \theta_{-t} \omega))}^{-t} \tilde h(r^*(\theta_{s} \omega)) \circ \,  \rmd W_{s}^2( \omega)\right| \,.
\end{align*} 
We use this observation to conclude 
$\psi(t + T(\theta_{-t} \omega), \omega) = \psi(t, \omega)$ as in (a).
Furthermore, we observe that for almost all $\omega \in \Omega$ and $t, t_0 \geq 0$
\begin{align*}
 \int_{-t_0}^{t} \tilde h(r^*(\theta_{s} \omega)) \circ \,  \rmd W_{s}^2( \omega) &= \int_{-t_0 -t}^{0} \tilde h(r^*(\theta_{s+t} \omega)) \circ \,  \rmd W_{s+t}^2( \omega) \rmd s  \\
  &= \int_{-t_0 -t}^{0} \tilde h(r^*(\theta_{s} (\theta_t \omega))) \circ \,  \rmd W_{s}^2( \theta_t \omega) \rmd s \,,
\end{align*}
such that $\varphi(t, \omega,\psi(t_0, \omega)) = \psi(t + t_0, \theta_t \omega)$ follows as in (a).
This finishes the proof.
\end{proof}
\begin{figure}[htbp]
        \centering
  		\begin{overpic}[width=0.9\textwidth]{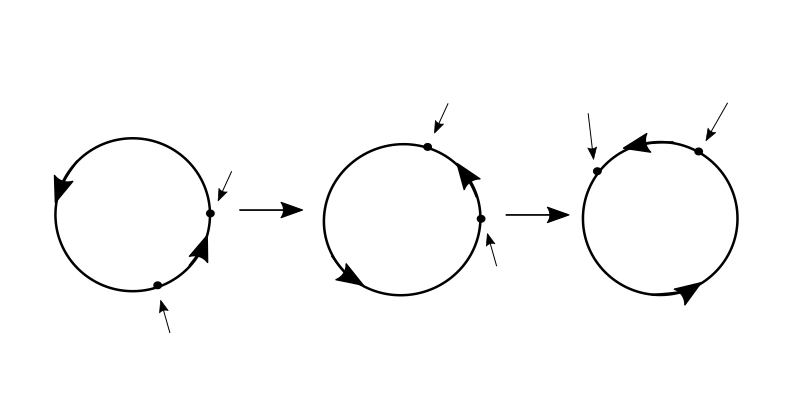} 
  		         \put(9,20){\small $A(\theta_{-t} \omega)$ }
  		         \put(48,17){\small $A(\omega)$ }
  		         \put(79,17){\small $A(\theta_{t} \omega)$ }
                \put(26,34){\scriptsize $\psi(0, \theta_{-t} \omega)$ }
                \put(26,32){\scriptsize $= \psi(T(\theta_{-t} \omega), \theta_{-t} \omega) $}
                \put(60,17){\scriptsize $\psi(0, \omega)$ }
                \put(60,15){\scriptsize $= \psi(T(\omega), \omega) $}
                \put(22,9){\scriptsize $\psi(-t, \theta_{-t} \omega)$ }
                \put(22,7){\scriptsize $= \psi(-t+T(\omega), \theta_{-t} \omega) $}
                \put(30,27){\scriptsize $\varphi(t,\theta_{-t }\omega, \cdot) $}
                \put(65,27){\scriptsize $\varphi(t,\omega, \cdot) $}
                \put(48,42){\scriptsize $\psi(t, \omega)$ }
                \put(48,40){\scriptsize $= \psi(t + T(\theta_{-t }\omega), \omega) $}
                \put(68,42){\scriptsize $\psi(2t, \theta_{t} \omega)$ }
                \put(68,40){\scriptsize $= \psi(2t + T(\theta_{-t }\omega), \theta_{t}  \omega) $}
                \put(90,42){\scriptsize $\psi(t, \theta_{t} \omega)$ }
                \put(90,40){\scriptsize $= \psi(t + T(\omega), \theta_{t}  \omega) $}
        \end{overpic}
\caption{Sketch of Crauel random periodic solutions (CRPS), following two points along the dynamics from $A(\theta_{-t} \omega)$ via $A(\omega)$ to $A(\theta_t \omega)$. The point $\psi(0, \theta_{-t} \omega)$ is mapped by $\varphi(t, \theta_{-t} \omega, \cdot)$ to $\psi(t, \omega)$ which is then mapped by $\varphi(t, \omega, \cdot)$ to $\psi(2t, \theta_t\omega)$, in each case preserving the period $T(\theta_{-t} \omega)$. Similarly, the point $\psi(-t, \theta_{-t} \omega)$ is mapped by $\varphi(t, \theta_{-t} \omega, \cdot)$ to $\psi(0, \omega)$ which is then mapped by $\varphi(t, \omega, \cdot)$ to $\psi(t, \theta_t\omega)$, in each case preserving the period $T(\omega)$. The arrows indicate that the CRPS parametrizes the fiber of the attractor as $A(\omega) = \{ \psi(t, \omega)\,: \, t \in [0,  T(\omega))\}$. }
        \label{fig:rps_sketch}
\end{figure}
Note that in Example~\ref{ex:nonfixedphase}, and by that also the simpler subcase Example~\ref{ex:basic_fixed_per}, it is easy to check that the Lyapunov exponents satisfy $\lambda_1 = 0$ and $\lambda_2 < 0$.
We want to make three additional remarks on Proposition~\ref{prop:crps_ex}, also concerning Definition~\ref{def:CRPS}.
\begin{remark} \label{rem:crps_alternatives}
The proof of Proposition~\ref{prop:crps_ex} shows why we require $\psi(t+T(\theta_{-t}\omega), \omega) = \psi(t, \omega)$ in Definition~\ref{def:CRPS} instead of choosing $T(\omega)$ or $T(\theta_t \omega)$ in such a formula. It is precisely the relation we obtain from equations~\eqref{eq:CRPS_ex} and~\eqref{eq:int_Tomega}. 
Instead of equation~\eqref{eq:int_Tomega}, one might alternatively consider 
\begin{equation*}
\int_{0}^{T( \omega)} h(r^*(\theta_s \omega)) \rmd s = 2 \pi\,,
\end{equation*} 
and replace the time integral in $\psi_{\vartheta}(t, \omega)$~\eqref{eq:CRPS_ex} accordingly.
However, it is easy to check that the invariance requirement $\varphi(t, \omega, \psi_{\vartheta}(t_0, \omega)) = \psi_{\vartheta}(t+t_0, \theta_t \omega)$ is not satisfied in this situation. Hence, the choice of period in Definition~\ref{def:CRPS} turns out to be the appropriate one for an application to Example~\ref{ex:nonfixedphase} which we see as the fundamental model for extending random periodic solutions to noise-dependent periods. Additionally note that, when $\tilde h \neq 0$ in equation~\eqref{eq:exzz2_noise}, the direction of periodicity depends on the noise realization $\omega$.
\end{remark}
\begin{remark} \label{rem:rps_attract}
Note that for any $\vartheta \in [0, 2 \pi)$ we have $\psi_{\vartheta} (t, \omega) \in A(\omega)$ for all $t \geq 0$, $\omega \in \Omega$, where $A$ is the random attractor given in equation~\eqref{randomattractor_ex}. Hence, we have established the analogous situation to the deterministic case in the sense that the attracting random cycle corresponds to a random periodic solution; see also Figure~\ref{fig:rps_sketch}.
\end{remark}
\begin{remark} \label{rem:nontractable_model}
One may ask what happens when $h, \tilde h$ in equation~\eqref{eq:exzz2_noise} also depend on $\vartheta$. Then there can, of course, still be a CRPS but we do not know a priori the existence of some stationary process $\vartheta^*$ similarly to $r^*$ which we need to write down for an explicit solution such as \eqref{eq:CRPS_ex}.
\end{remark}
We will see later in Proposition~\ref{prop:expect_period} that we can determine $\mathbb{E}[T(\omega)] < \infty$, using a variant of the \emph{Andronov-Vitt-Pontryagin formula} (cf.~\cite{schuss10}).
\subsubsection{Chaotic random attractors and singletons}
\label{sec:chaotic_attractors}
More generally, i.e., in addition to the case with first Lyapunov exponent $\lambda_1=0$, we want to consider the situations where $\lambda_1 > 0$ and $\lambda_1 < 0$ (always assuming volume contraction to an attractor expressed by $\sum_{j} \lambda_j < 0$). For $\lambda_1<0$, this typically means that the random attractor is a singleton (see, for example, \cite{fgs16}) and one speaks of complete \emph{synchronization}. In such a situation, the dynamics on the random attractor is trivial, so there is no natural notion of isochronicity. In the case $\lambda_1>0$, one typically speaks of a \emph{chaotic random attractor} which is not a singleton. We can illustrate these two cases by the following example very similar to the previous ones.

\begin{ex} \label{ex:chaos}
We consider the following stochastic differential equations on $\mathbb{R}^2$ with purely external noise of intensity $\sigma \geq 0$,
\begin{equation}\label{NormalForm}
\begin{array}{ll}
\rmd x &= (x -  y - (x-by)(x^2 + y^2))\rmd t + \sigma \circ \rmd W_t^1,\\
\rmd y &=  ( y + x - (bx+y)(x^2 + y^2)) \rmd t +  \sigma \circ \rmd W_t^2,
\end{array}
\end{equation}
where $ b \in \mathbb{R}$ and $W_t^1, W_t^2$
denote independent one-dimensional Brownian motions.
In polar coordinates the system can be written as
\begin{align} \label{Polar_original}
\rmd r &= \left( r  -  r^3  \right) \rmd t  + \sigma (\cos \vartheta \circ \, \rmd W_t^1 +  \sin \vartheta \circ \, \rmd W_t^2), \nonumber\\
\rmd \vartheta &= (1 + br^2) \, \rmd t + \frac{\sigma}{r}( -  \sin \vartheta \circ \, \rmd  W_t^1 +  \cos \vartheta \circ \, \rmd W_t^2).
\end{align}
This form illustrates the role of the parameter $b$ inducing a \emph{shear force}: if $b > 0$, the phase velocity $\frac{\rmd \vartheta}{\rmd t}$ depends on the amplitude $r$. 
Since Gaussian random vectors are invariant under orthogonal transformations, one might think of writing the problems with the independent Wiener processes
\begin{align*}
\rmd W_r &= \cos \vartheta \,\rmd W_t^1 + \sin \vartheta \, \rmd W_t^2,\\ 
\rmd W_\vartheta  &= - \sin \vartheta \, \rmd  W_t^1 + \cos \vartheta \, \rmd W_t^2.
\end{align*}
However, the pathwise properties of the processes seen as random dynamical systems change under this transformation. In~\eqref{Polar_original}, the radial components of the trajectories depend on $\vartheta$ which appears in the diffusion term and destroys the skew-product structure we had in the previous example~\ref{ex:nonfixedphase}.

It has been shown in \cite{delr17} that for $b$ small enough the first Laypunov exponent $\lambda_1 < 0$ is negative such that the corresponding random attractor $A$ is indeed a singleton. For $b$ large, one can see numerically that the attractor becomes chaotic. A proof of $\lambda_1 > 0$ has been obtained in \cite{EngelLambRasmussen1} for a simplified model of~\eqref{eq:polargeneral} in cylindrical coordinates and recently also in the setting of restricting the state space on a bounded domain and only considering the dynamics conditioned on survival in this domain, using a computer-assisted proof technique \cite{BredenEngel}.
\end{ex}

One can characterize chaotic random attractors as non-trivial geometric objects and supports of \emph{SRB measures}, i.e.~sample measures with densities on unstable manifolds. For details see \cite{BlumenthalYoung, LedrappierYoung} and for further discussions relevant for our setting e.g.~\cite{b12, Engelphdthesis}. Due to the compactness and the minimality property of random attractors there must be recurrence on these objects and one may even find Crauel Random Periodic Solutions there. However, it is questionable to what extent one can speak of isochronicity, given the very irregular recurrence properties. This already makes isochronicity a difficult issue for deterministic chaotic oscillators, see e.g.~\cite{Schwabedaletal2012}.

\subsection{Random limit cycles as normally hyperbolic random invariant manifolds} \label{sec:normally_hyperbolic}
As we have seen in Section~\ref{sec:chaotic_attractors}, we can generally not expect the persistence of periodic orbits from the deterministic to the stochastic case under (global) white noise perturbations. A point of view that is only considering local, bounded noise perurbations of normally hyperbolic manifolds, i.e.~implicitly also hyperbolic limit cycles, is presented in \cite{LiLuBates}, where normally hyperbolic random invariant manifolds and their foliations are studied.
In more details, consider the ODE~\eqref{eq:ODE} with a small random perturbation, i.e.~the \emph{random differential equation}
\be \label{eq:random_ODE}
\dot{x} = f(x) + \epsilon F(\theta_t \omega, x),
\ee
where $\epsilon > 0$ is a small parameter and $F$ is $C^1$, uniformly bounded in $x$, $C^0$ in $t$ for fixed $\omega$, and measurable in $\omega$. In several cases, SDEs can be transformed into a random differential equation~\eqref{eq:random_ODE}, in particular when the noise is additive or linear multiplicative; however, in this case, $F$ is generally not uniformly bounded. 
Hence, for an application of the following, one has to truncate the Brownian motion by a fixed large constant, as we will discuss later.
Let us firstly give the following definition:
\begin{defn}
A random invariant manifold for an RDS is a collection of nonempty closed random sets $\mathcal M (\omega)$, $\omega \in \Omega$, such that each $\mathcal{M}(\omega)$ is a manifold and
$$ \varphi(t, \omega, \mathcal M(\omega)) = \mathcal M(\theta_t \omega) \ \text{ for all } t \in \mathbb{R}, \omega \in \Omega.$$
The random invariant manifold $\mathcal M$ is called normally hyperbolic if for almost every $\omega \in \Omega$ and any $x \in \mathcal M(\omega)$, there exists a splitting which is $C^0$ in $x$ and measurable:
$$ \mathbb{R}^m = E^u(\omega, x) \oplus E^c(\omega, x) \oplus E^s(\omega, x)$$
of closed subspaces with associated projections $\Pi^u(\omega,x),\Pi^c(\omega,x)$ and $\Pi^s(\omega,x)$ such that
\begin{enumerate}[(i)]
\item the splitting is invariant
$$ \rmD \varphi(t,\omega,x) E^i(\omega,x) = E^i(\theta_t \omega, \varphi(t, \omega,x)), \ \text{ for } i=u,c,$$
and
$$ \rmD \varphi(t,\omega,x) E^s(\omega,x) \subset E^s(\theta_t \omega, \varphi(t, \omega,x)),$$
\item $\rmD \varphi(t, \omega,x)|_{E^i(\omega,x)} : E^i(\omega,x) \to E^i(\theta_t \omega, \varphi(t, \omega,x))$ is an isomorhpism for $i=u,c,s$ and $E^c(\omega,x)$ is the tangent space of $\mathcal M(\omega)$ at $x$,

\item there are $(\theta, \varphi)$-invariant random variables $\bar \alpha, \bar \beta: \mathcal M \to (0, \infty), \bar \alpha < \bar \beta$, and a tempered random variable $K(\omega,x) : \mathcal M \to [1, \infty)$ such that
\begin{align} 
\| \rmD \varphi(t,\omega,x) \Pi^s(\omega,x)\| &\leq K(\omega,x) e^{- \bar \beta(\omega,x) t} \ \text{ for } t\geq 0, \label{eq:exponents_stable}\\
\| \rmD \varphi(t,\omega,x) \Pi^u(\omega,x)\| &\leq K(\omega,x) e^{\bar \beta(\omega,x) t} \ \text{ for } t\leq 0, \\
\| \rmD \varphi(t,\omega,x) \Pi^c(\omega,x)\| &\leq K(\omega,x) e^{ \bar \alpha(\omega,x) \left|t\right|} \ \text{ for } - \infty < t < \infty. \label{eq:exponents_center}
\end{align}
\end{enumerate}
\end{defn}

We can then deduce the following statements:
\begin{prop} \label{prop:existence_of_random_cycles}
Assume that $\Phi$ is a $C^k$ flow, $k \geq 1$, in $\mathbb{R}^m$ which has a hyperbolic periodic orbit $\gamma$, with exponents $\bar \alpha =0 <  \bar \beta$ characterizing the normal hyperbolicity as in~\eqref{eq:exponents_stable}, \eqref{eq:exponents_center}. Then there exists a $\delta > 0$ such that for any random $C^1$ flow $\varphi(t, \omega, \cdot)$ in $\mathbb{R}^m$, as for example induced by an RDE~\eqref{eq:random_ODE}, with
$$ \|\Phi(t, \cdot) - \varphi(t, \omega, \cdot)\|_{C^1} < \delta, \ \text{ for all } t \in [0,1], \omega \in \Omega, $$
we have that
\begin{enumerate}[(i)]
\item the random flow $\varphi(t, \omega, \cdot)$ has a $C^1$ normally hyperbolic invaraint random manifold $\mathcal M(\omega)$ in a small neighbourhood of $\gamma$,
\item if $\varphi(t, \omega, \cdot)$ is $C^k$, then $\mathcal M(\omega)$ is a $C^k$ manifold diffeomorphic to $\gamma$ for each $\omega \in \Omega$,
\item there exists a stable manifold $\mathcal W^s(\omega)$ of $\mathcal M(\omega)$ under $\varphi(t, \omega, \cdot)$, i.e. for all $x \in \mathcal W^s(\omega)$
$$ \lim_{t \to \infty} \operatorname{dist} \big(\varphi(t, \omega, x), \mathcal M(\theta_t\omega)\big) = 0 \faa \omega\in\Omega$$
\item the manifold $\mathcal M(\omega)$ is, in fact, a random limit cycle in the sense of Definition~\ref{defn:random_limit_cycle}.
\end{enumerate}
\end{prop}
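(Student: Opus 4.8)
The plan is to invoke the persistence theory for normally hyperbolic random invariant manifolds developed by Li, Lu and Bates in \cite{LiLuBates}, after checking that the unperturbed hyperbolic limit cycle $\gamma$ supplies exactly the data required by their abstract perturbation theorem. Since $\gamma$ is a hyperbolic stable periodic orbit of the $C^k$ flow $\Phi$, it is a compact boundaryless invariant manifold which is normally hyperbolic with one-dimensional center bundle $E^c = \txtT\gamma$ tangent to the orbit, trivial unstable bundle $E^u = \{0\}$, and stable bundle $E^s$ of rank $m-1$; the uniform dichotomy constants are $\bar\alpha = 0$ along $\gamma$ and any $\bar\beta \in (0,\lambda)$ with $\lambda = \min_{i:\lambda_i\neq 0}\Re(-\lambda_i)$ as in Section~\ref{sec:deterministiccase}. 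These are precisely the hypotheses entering the perturbation result.

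First I would fix the perturbation size. By the $C^1$-persistence theorem for normally hyperbolic random invariant manifolds (\cite{LiLuBates}), there is a $\delta = \delta(\gamma,\Phi) > 0$ such that whenever a random $C^1$ flow $\varphi(t,\omega,\cdot)$ satisfies $\|\Phi(t,\cdot) - \varphi(t,\omega,\cdot)\|_{C^1} < \delta$ for $t \in [0,1]$ and all $\omega \in \Omega$, there is a random invariant manifold $\mathcal M(\omega)$ in a fixed small tubular neighbourhood of $\gamma$, realized as the graph over $\gamma$ (in normal coordinates) of a Lipschitz map depending measurably on $\omega$; this is (i). Statement (ii) follows from the smoothness part of the same theory: when $\varphi(t,\omega,\cdot)$ is $C^k$, the standard fiber-contraction argument on $C^k$ sections upgrades the graph to a $C^k$ graph, and since $\mathcal M(\omega)$ is $C^1$-close to $\gamma$ it is a simple closed curve, hence $C^k$-diffeomorphic to $\gamma \cong S^1$ for every $\omega$. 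For (iii) I would appeal to the accompanying random stable manifold and foliation theorem of \cite{LiLuBates}: because the normal bundle of $\mathcal M$ is purely stable, its stable manifold $\mathcal W^s(\omega)$ is an \emph{open} random neighbourhood of $\mathcal M(\omega)$ with $\varphi(t,\omega,\mathcal W^s(\omega)) \subseteq \mathcal W^s(\theta_t\omega)$ and uniform exponential tracking of $\mathcal M(\theta_t\omega)$, which yields the displayed convergence; shrinking $\delta$ if necessary, $\mathcal W^s(\omega)$ contains a common deterministic tubular neighbourhood $U \supset \gamma$ for almost all $\omega$.

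For (iv) I would verify the items of Definition~\ref{defn:random_limit_cycle} through Definition~\ref{randomattractor}. Strict $\varphi$-invariance $\varphi(t,\omega)\mathcal M(\omega) = \mathcal M(\theta_t\omega)$ and $\mathcal F\otimes\mathcal B(\R^m)$-measurability are built into the construction of $\mathcal M$ as a measurable random graph; each fibre $\mathcal M(\omega)$ is compact, being a $C^1$ perturbation of the compact curve $\gamma$, and lies in the fixed bounded neighbourhood $U$, so $\mathcal M$ is trivially tempered. Taking $\mathcal S$ to be the compact tempered sets contained in $U$, the uniform exponential tracking from (iii) gives both $\lim_{t\to\infty}\operatorname{dist}(\varphi(t,\omega)D(\omega),\mathcal M(\theta_t\omega)) = 0$ and $\lim_{t\to\infty}\operatorname{dist}(\varphi(t,\theta_{-t}\omega)D(\theta_{-t}\omega),\mathcal M(\omega)) = 0$, so $\mathcal M$ is simultaneously a random forward and pullback attractor on this collection; and $\mathcal M(\omega) \cong S^1$ since it is $C^1$-diffeomorphic to $\gamma$. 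Hence $\mathcal M$ is an attracting random cycle.

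The genuinely non-trivial inputs are all imported: the persistence of the measurable invariant random graph, its $C^k$ regularity, and the stable foliation with \emph{uniform} exponential rates, all from \cite{LiLuBates}. The main point requiring care in the write-up is matching conventions: \cite{LiLuBates} works with a random differential equation whose perturbation $F$ is uniformly bounded and $C^1$ in $x$, so the proposition applies verbatim to such RDEs \eqref{eq:random_ODE} (or to SDEs after truncating the Brownian path by a large constant, as flagged in the text), and the $C^1$-closeness hypothesis over the unit time-step must be stated so as to be uniform in $\omega$ — which is exactly what the bounded-perturbation assumption delivers. No genuine tempered-estimate work is needed here beyond citing the theorem, precisely because $\mathcal M$ sits in a bounded neighbourhood of $\gamma$.
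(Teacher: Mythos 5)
Your proposal is correct and takes essentially the same route as the paper: cite the normally hyperbolic random invariant manifold persistence theorem of Li, Lu and Bates (\cite[Theorem~2.2]{LiLuBates}) for items (i)--(iii), then observe that (iv) follows because the tracking of $\mathcal M(\theta_t\omega)$ by trajectories in $\mathcal W^s(\omega)$ gives random attraction on the appropriate collection of tempered sets while (ii) gives $\mathcal M(\omega)\cong S^1$. You flesh out the verification of Definitions~\ref{randomattractor} and~\ref{defn:random_limit_cycle} in more detail (and additionally note pullback attraction, which is not required since the definition allows either forward or pullback), but the logical skeleton is identical to the paper's short proof.
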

\begin{proof}
The statements (i)-(iii) follow directly from \cite[Theorem 2.2]{LiLuBates}. It is clear from (iii) that $\mathcal M(\omega)$ is a random forward attractor with respect to the collection $\mathcal S$ of tempered random sets whose fibers $S(\omega)$ are contained in $\mathcal W^s(\omega)$. Additionally, from (ii), it follows directly that $\mathcal M(\omega)$ is diffeomorphic to the unit circle, and, hence, we can conclude statement (iv).
\end{proof}

\section{Random isochrons} \label{sec:random_isochrons}
\subsection{Isochrons as stable manifolds} \label{sec:stable_manifold}

\subsubsection{Definition of forward isochrons}
Let $A$ be an attracting random cycle for the random dynamical system $(\theta, \varphi)$ where $A$ is a random forward attractor (and possibly also a random pullback attractor). One may think of equations of the type~\eqref{eq:exzz2_noise}, \eqref{Polar_original} or similar such that almost sure forward and pullback convergence coincide (see e.g.~\cite[Proof of Theorem B]{delr17} or \cite[Example 2.7 (i)]{Scheutzow02}). 
We further assume that we are in the situation of a differentiable hyperbolic random dynamical system as discussed in Section~\ref{sec:differentiability}.

In the typical setting of attracting random cycles, we may assume that $\lambda_1 = 0$ with single multiplicity and $\lambda_i < 0$ for all $2\leq i \leq p$.
In analogy to the stable manifolds of points on a deterministic limit cycle, we can then establish the following key novel definition (see also Figure~\ref{fig:random_stable_man}).
\begin{defn} \label{def:forwardisochron}
The \emph{random forward isochron} $W^{\txtf}(\omega, x)$ of a pair $(\omega, x) \in \Omega \times \R^m$ with $x \in A(\omega)$ is given by the stable set
\be
\label{eq:isochron_forward}
W^{\txtf}(\omega, x):= \left\{y\in \R^m:\lim_{t\ra +\I}\txtd(\varphi(t, \omega, y),\varphi(t, \omega, x))=0\right\},
\ee  
for almost all $\omega \in \Omega$ and all $x \in A(\omega)$.  In particular, we have for all $\tilde \lambda \in (0, - \lambda_2)$, where $\lambda_2$ denotes the largest nonzero Lyapunov exponent,
\be
\label{eq:isochron_forward_hyperbolic}
W^{\txtf}(\omega, x) =\left\{y\in \R^m:\sup_{t\geq 0} \txte^{\tilde \lambda t} \txtd(\varphi(t, \omega, y),\varphi(t, \omega, x)) < \infty \right\}.
\ee 
\end{defn}

\begin{remark}
It is clear from the definition why we exclude the case $\lambda_1 <0$. In this situation, the set $W^{\txtf}(\omega,x)$ is the whole absorbing set and, hence, no information about the decomposition of the state space by the dynamics can be obtained that way.

As indicated in Section~\ref{sec:chaotic_attractors}, a chaotic random attractor, characterized by $\lambda_1 >0$, also exhibits recurrence properties such that Definition~\ref{def:forwardisochron} can principally be also applied to this situation. 
However, it is arguable to what extent one can speak of isochronicity, given the irregular recurrence properties. Since this already makes isochronicity a difficult issue for deterministic chaotic oscillators \cite{Schwabedaletal2012}, we leave a detailed analysis of random isochrons for chaotic random attractors as a topic for future work.
\end{remark}
It is easy to observe that for all $s \geq 0$ we have
\be \label{Wfinvariance}
\varphi(s,\omega) W^{\txtf}(\omega, x) = W^{\txtf}(\theta_s \omega, \varphi(s,\omega,x)),
\ee
i.e.~the forward isochrons are $\varphi$-invariant, as depicted in Figure~\ref{fig:random_stable_man}.
\begin{figure}[htbp]
        \centering
  		\begin{overpic}[width=0.7\textwidth]{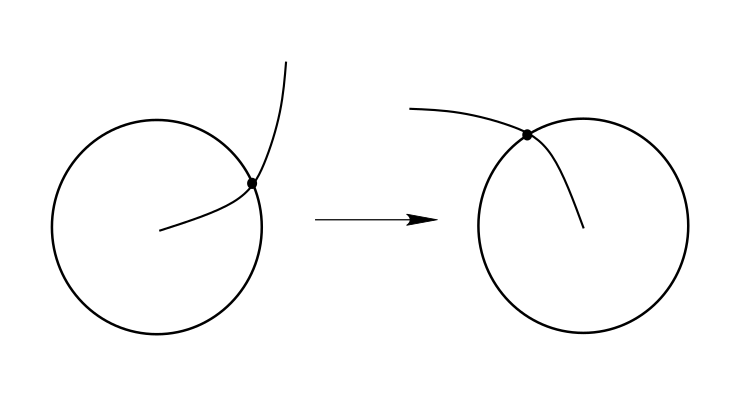}   
  		         \put(20,11){\small $A(\omega)$ }
  		         \put(77,11){\small $A(\theta_{t} \omega)$ }
                \put(46,27){\small $\varphi(t,\omega, \cdot) $}
                \put(36,28){\scriptsize $x$}
                \put(74,35){\scriptsize $\varphi(t,\omega, x) $}
                \put(26,42){\small $W^{\txtf}(\omega, x)$ }
                \put(60,41){\small $W^{\txtf}(\theta_t \omega, \varphi(t,\omega,x))$ }
        \end{overpic}
\caption{Sketch of isochrons $W^{\txtf}(\omega, x)$ at $A(\omega)$ and $W^{\txtf}(\theta_t \omega, \varphi(t,\omega,x))$ at $A(\theta_t \omega)$ as an illustration of Definition~\ref{def:forwardisochron} and the invariance relation~\eqref{Wfinvariance}, for $A(\omega)$ being a random limit cycle.}
        \label{fig:random_stable_man}
\end{figure}
\subsubsection{Existence and properties of random stable sets}
In the literature on (global) random dynamical systems, the existence of stable sets such as $W^{\txtf}(\omega,x)$ as stable manifolds is often first established for discrete time, see e.g.~\cite{r79} or \cite[Chapter III]{lq95}. (Arnolds treatment \cite[Chapter 7]{a98} is limited to equilibria.) Even though the local view in~\cite{LiLuBates}, as described in Section~\ref{sec:normally_hyperbolic}, is different, we need to also account for the global situation in order to provide the full picture.
Hence, we begin with adopting the discrete-time approach by reducing the analysis to time-one maps $\varphi(1,\omega, \cdot)$ and its concatenations
\begin{equation} \label{discreteRDS}
 \varphi(n, \omega, x) = (\varphi(1, \theta_{n-1} \omega, \cdot)\circ \varphi(1, \theta_{n-2} \omega, \cdot) \circ \cdots \circ \varphi(1, \omega, \cdot))(x), \ n \in \mathbb{N}  \,.
\end{equation}
First we want to conclude for all $\tilde \lambda \in (0, - \lambda_2)$ that
\be
\label{eq:isochron_forward_discrete}
 \tilde W^{\txts}(\omega, x):=\left\{y\in \mathbb{R}^m: \sup_{n\geq 0} e^{\tilde \lambda n} \txtd(\varphi(n, \omega, y),\varphi(n, \omega, x)) < \infty \right\}
\ee 
is an $(m-1)$-dimensional immersed $C^k$-submanifold under sufficient boundedness assumptions which would be immediately satisfied if the state space $\cX$ is a compact manifold (cf.~\cite[Chapter III, Theorem 3.2]{lq95}). We will state such conditions for our setting $\cX = \mathbb{R}^m$ in the following. The transition to the time-continuous case, i.e.~establishing $W^{\txtf}(\omega, x) = \tilde W^{\txts}(\omega, x)$, then follows immediately from the integrability assumption~\eqref{integrabilityassumpt} for the MET, as one can observe with the proof of \cite[Chapter V, Theorem 2.2] {lq95}.

One possible approach can be found in \cite{b12}: consider the maps~\eqref{discreteRDS}. For $x \in \mathbb{R}^m$, we define the local linear shift function
$$ f_x \,: \, \mathbb{R}^m \cong \txtT_x \mathbb{R}^m \to \mathbb{R}^m, \quad y \mapsto f_x(y) := x + y \,. $$
Further, we define the map
$$ F_{(\omega,x),n} \,:\, \txtT_{\varphi(n, \omega,x)} \mathbb{R}^m \to \txtT_{\varphi(n+1, \omega,x)} \mathbb{R}^m; \quad F_{(\omega,x),n} := f_{\varphi(n+1, \omega,x)} ^{-1} \, \circ \varphi(1, \theta_{n} \omega, \cdot) \circ f_{\varphi(n, \omega,x)}\,,$$
which is the evolution process of the linearization around the trajectory starting at $x \in \mathbb{R}^m$. Assume that there is an invariant probability measure $\mathbb{P} \times \rho$ for $(\Theta_t)_{t \geq 0}$ on $(\Omega \times \R^m, \mathcal{F}_0^\infty \times \mathcal{B}(\R^m))$ (see Appendix~\ref{sec:RDSSDE} and~\ref{sec:invariant_measures}). If the RDS is induced by an SDE, the measure $\rho$ is exactly the stationary measure of the associated Markov process. The integrability condition of the MET with respect to this measure reads
\be \label{ass:met} 
\log^+ \| \rmD \varphi(1, \omega, \cdot) \|  \in L^1(\mathbb{P} \times \rho)\,.
\ee
The crucial boundedness assumption that compensates for the lack of compactness in the proof of a stable manifold theorem reads
\be
\label{ass:secder}
\log \left( \sup_{ \xi \in B_1(x)} \| \rmD_{\xi}^2 F_{(\omega,x),0}\| \right) \in  L^1(\mathbb{P} \times \rho) \,,
\ee
where $\rmD^2$ is the second derivative operator and $B_1(x)$ denotes the ball of radius $1$ centered at $x \in \mathbb{R}^m$.

In the situation where the maps~\eqref{discreteRDS} of the discrete-time RDS are the time-one maps of the continuous-time RDS induced by the SDE~\eqref{SDEgen} with the stationary distribution fulfilling
\be \label{intlog0}
 \int_{\mathbb{R}^m} \log(\left\| x \right\| +1)^{1/2} \, \rmd \rho(x) < \infty\,,
\ee
we have the following requirements on $b, \sigma_i \in C^{k+1}$, $1 \leq i \leq n, k \geq 2$, such that 
assumption~\eqref{ass:secder} is satisfied:
\be \label{condition:Biskamp}
\|b\|_{k,\delta} + \sum_{i=1}^n \|\sigma_i\|_{k,\delta} < \infty\,,
\ee
where $0 < \delta \leq 1$ and with multi index notation $\alpha = (\alpha_1, \dots, \alpha_m)$, $\left|\alpha \right| = \sum_{i=1}^m \left|\alpha_i \right|$, for $f \in C^k$
\be \label{equ:norm}
\|f\|_{k,\delta} = \sup_{x \in \R^m} \frac{\|f(x)\|}{1 + \|x\|} + \sum_{1 \leq \left| \alpha \right| \leq k} \sup_{x \in \R^m} \| \rmD^{\alpha} f(x) \| + \sum_{\left| \alpha \right| = k}  \sup_{x \neq y} \frac{\| \rmD^{\alpha} f(x) -  \rmD^{\alpha} f(y)\| }{\|x-y\|^{\delta}}.
\ee
This means that the coefficients of the SDE have at most linear growth, globally bounded derivatives and the $k$-th derivatives have bounded $\delta$-H\"older norm. In \cite{b12}, also the backward flow and a condition similar to~\eqref{ass:secder} for the inverse are considered, but these are not needed when we purely regard the stable manifold problem. These conditions on the drift $b$ are generally too restrictive since already examples~\eqref{eq:exHopf},~\eqref{eq:exzz2} and~\eqref{eq:exzz2planar} are not covered. 
Of course, one can always consider the dynamics on a compact domain $\cK$, with absorbing or reflecting boundary conditions at the boundary of the domain, as will see later in Section~\ref{sec:connection_physics} for the averaged problem on the level of the Kolmogorov equations. However, this involves further technicalities for the random dynamical systems approach which we try to avoid here. The easiest way of reduction to a compact domain $\cK$ is to assume compact support of the noise and absorption to $\cK$ through the drift dynamics such that neither global nor boundary conditions are needed (see Theorem~\ref{thm:SMT} (iii)).

Additionally  we consider~\cite[Section 3]{fgs16} which discusses conditions for synchronization to a singleton random attractor for random dynamical systems induced by an SDE~\eqref{SDEgen} with additive noise, i.e.~$n=m$ and, for all $1\leq i,j \leq n$, $\sigma_i^j = \sigma \delta_{i,j}$ where $\sigma > 0$ and $\sigma_i^j$ denotes the $j$-th entry of the vector $\sigma_i$.
The authors formulate a special local stable manifold theorem for the case $\lambda_1 < 0$, which is, however, based on \cite{r79} where stable manifold theorems are considered in full generality. The assumption for deducing the local stable manifold theorem amounts to a (weaker) combination of conditions~\eqref{ass:met} and~\eqref{ass:secder}, and reads
\be \label{ass:fgs}
\mathbb{E} \int_{\mathbb{R}^m} \log^+ \| \varphi(1,\omega, \cdot + x) - \varphi(1,\omega,x) \|_{C^{1,\delta}(\overline B_1(0))} \, \rmd \rho(x) < \infty\,,
\ee
where $C^{1,\delta}$ is the space of $C^1$-functions whose derivatives are $\delta$-H\"{o}lder continuous for some $\delta \in (0,1)$ and $\rho$ denotes the stationary measure of the associated Markov process. We introduce a classical dissipativity condition, the \emph{one-sided Lipschitz condition}
\be \label{ass:dissipative}
\langle b(x) - b(y), x - y \rangle \leq \kappa \left\| x - y \right\|^2\,,
\ee
for all $x,y \in \mathbb{R}^m$ and $\kappa > 0$.
According to \cite[Lemma 3.9]{fgs16}, condition~\eqref{ass:fgs} is satisfied in the case of additive noise if $b \in C^2(\mathbb{R}^m)$ fulfills~\eqref{ass:dissipative}, admits at most polynomial growth of the second derivative, i.e.
\begin{equation} \label{secder}
\left\| \rmD^2 b(x) \right\| \leq C(\left\| x \right\|^M +1) \quad \text{for all } x \in \mathbb{R}^m \text{ and some } C>0, M \in \mathbb{N}\,, 
\end{equation}
and the stationary distribution $\rho$ satisfies
\be \label{intlog}
 \int_{\mathbb{R}^m} \log^+(\left\| x \right\|) \rmd \rho(x) < \infty\,.
\ee
\subsubsection{Main theorem about random isochrons}
Assumptions~\eqref{ass:dissipative} and~\eqref{secder} on the drift are weaker than condition~\eqref{condition:Biskamp} but, in \cite{fgs16}, only applied to situations with additive noise whereas at least linear multiplicative noise as in~\eqref{eq:exzz2} is a desirable model for random periodicity. We address this issue in Remark~\ref{rem:mainexample} and point (iii) of the following theorem, which summarizes the findings from above: 
\begin{thm}[Forward isochrons are stable manifolds]\label{thm:SMT}
Consider an ergodic $C^k$, $k \geq 2$, random dynamical system $(\theta, \varphi)$ on $R^m$ with random attractor $A$, satisfying the integrability assumption~\eqref{integrabilityassumpt} of the Multiplicative Ergodic Theorem such that $\lambda_1 = 0$ with single multiplicity and $\lambda_i < 0$ for all $2\leq i \leq p$. Let further one of the following assumptions be satisfied:
\begin{enumerate}[(i)]
\item The RDS $(\theta, \varphi)$ is induced by an SDE of the form~\eqref{SDEgen} such that the unique stationary measure $\rho$ satisfies~\eqref{intlog0} and the drift and diffusion coefficients satisfy~\eqref{condition:Biskamp},
\item The RDS $(\theta, \varphi)$ is induced by an SDE of the form~\eqref{SDEgen} with $n=m$ and, for all $1\leq i,j \leq n$, $\sigma_i^j = \sigma \delta_{i,j}$ where $\sigma > 0$,
such that the unique stationary measure $\rho$ satisfies~\eqref{intlog} and the drift satisfies conditions~\eqref{ass:dissipative} and~\eqref{secder},
\item The RDS $(\theta, \varphi)$ is induced by an SDE of the form~\eqref{SDEgen} such that $\supp(\sigma) \subset \mathbb{R}^m$ is compact, the drift $b$ satisfies condition~\eqref{ass:dissipative} with $\kappa < 0$ for all $ \|x\|, \|y\| > R$ for some $R>0$ and there is a unique stationary measure $\rho$ with $\supp(\rho) \subset \mathbb{R}^m$ compact.
\item the RDS satisfies the conditions of Proposition~\ref{prop:existence_of_random_cycles}.
\end{enumerate}
Then for almost all $\omega \in \Omega$ and all $x \in A(\omega)$ the random forward isochrons $W^{\txtf}(\omega, x)$ (see \eqref{eq:isochron_forward_hyperbolic})
are a uniquely determined $C^{k-1}$ in $x$ family of $C^k$  $(m-1)$-dimensional submanifolds (at least locally, i.e.~within a neighbourhood $\mathcal U$ of $x$) 
of the stable manifold $\mathcal W^s(\omega)$ such that
$$ \mathcal W^s(\omega) = \cup_{x \in A(\omega)} W^{\txtf}(\omega, x),$$
where the union is disjoint.
\end{thm}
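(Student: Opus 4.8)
The plan is to prove the statement by working first with the discrete-time random dynamical system generated by the time-one map, then lifting the result to continuous time, and finally assembling the individual stable sets into a foliation of $\mathcal{W}^s(\omega)$. First I would replace $\varphi$ by the cocycle of time-one maps \eqref{discreteRDS} and work with the discrete stable set $\tilde W^{\txts}(\omega,x)$ of \eqref{eq:isochron_forward_discrete}. For each of the four hypotheses, the existence of $\tilde W^{\txts}(\omega,x)$ as a $C^k$ immersed $(m-1)$-dimensional submanifold, depending $C^{k-1}$ on $x$, would be read off from a known stable manifold theorem: under (i) from Biskamp \cite{b12}, after checking that \eqref{condition:Biskamp} together with \eqref{intlog0} implies \eqref{ass:met} and \eqref{ass:secder}; under (ii) from Ruelle's theorem \cite{r79} as used in \cite{fgs16}, after checking via \cite[Lemma 3.9]{fgs16} that \eqref{ass:dissipative} and \eqref{secder} imply \eqref{ass:fgs}; under (iii) from \cite[Chapter III, Theorem 3.2]{lq95} applied on the compact absorbing set, where the compact support of $\sigma$ and $\rho$ together with the dissipativity of $b$ outside $B_R(0)$ make the relevant norms automatically finite; and under (iv) from Proposition~\ref{prop:existence_of_random_cycles} and \cite[Theorem 2.2]{LiLuBates}. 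Since $\lambda_1=0$ is simple and $\lambda_i<0$ for $i\ge 2$, the Oseledets splitting along the orbit of $x$ has a one-dimensional non-negative part, which pins down the codimension as $1$; the $C^{k-1}$ dependence on $x$ is the usual loss of one derivative in the invariant-foliation part of these theorems.

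Next I would show $W^{\txtf}(\omega,x)=\tilde W^{\txts}(\omega,x)$ and that the two descriptions \eqref{eq:isochron_forward} and \eqref{eq:isochron_forward_hyperbolic} agree. The only non-obvious inclusions are that mere convergence of $\txtd(\varphi(t,\omega,y),\varphi(t,\omega,x))$ to $0$ forces exponential convergence at every rate $\tilde\lambda\in(0,-\lambda_2)$, and that discrete-time exponential decay implies continuous-time exponential decay. For the first, once $y$ enters a neighbourhood of the orbit of $x$ the MET applied along that orbit identifies the stable set with the strong stable manifold, on which the rate is arbitrarily close to $\lambda_2$. For the second, I would interpolate on unit intervals: by the integrability assumption \eqref{integrabilityassumpt} the quantity $\sup_{0\le s\le 1}\log^{+}\|\rmD\varphi(s,\theta_n\omega,\cdot)\|$ grows subexponentially along the orbit, so a discrete estimate at rate $\tilde\lambda$ yields a continuous one at any slightly smaller rate; this is exactly the argument behind \cite[Chapter V, Theorem 2.2]{lq95}.

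For the foliation statement, I would first note that since $A$ is a random forward attractor and $\lambda_1=0$ is simple, the Oseledets splitting along $A(\omega)$ produces a one-dimensional centre bundle tangent to the cycle and a complementary stable bundle, so $A(\omega)$ is a normally hyperbolic random invariant manifold with stable manifold $\mathcal W^s(\omega)=\{y:\operatorname{dist}(\varphi(t,\omega,y),A(\theta_t\omega))\to0\}$. I would then invoke the invariant strong stable foliation of $\mathcal W^s(\omega)$ — from \cite{LiLuBates} in case (iv), and from the same discrete-time constructions as above with base points ranging over $A(\omega)$ rather than a single orbit in the other cases — whose leaf through $x\in A(\omega)$ is exactly $\{y:\txtd(\varphi(t,\omega,y),\varphi(t,\omega,x))\to0\}=W^{\txtf}(\omega,x)$. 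This yields $\mathcal W^s(\omega)=\bigcup_{x\in A(\omega)}W^{\txtf}(\omega,x)$, with the union disjoint because the dynamics induced on $A(\omega)\cong S^1$ has no exponential contraction along the cycle (e.g.\ when a Crauel random periodic solution exists it conjugates this dynamics to a time-changed rigid rotation), so $\txtd(\varphi(t,\omega,x_1),\varphi(t,\omega,x_2))\to0$ forces $x_1=x_2$. Uniqueness of the family follows from uniqueness in the stable manifold theorems invoked, and $\varphi$-invariance is \eqref{Wfinvariance}.

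The hardest part is, in the non-compact cases (i) and (ii), the combination of two issues: (a) genuinely turning the SDE coefficient bounds into the abstract integrability hypotheses of \cite{b12,r79} — bookkeeping on $\rmD^2$- and $C^{1,\delta}$-norms of flow increments weighed against the tail of the stationary measure — and (b) upgrading the per-trajectory stable manifold theorems to a bona fide foliation of $\mathcal W^s(\omega)$ over the one-dimensional transversal $A(\omega)$ with $C^{k-1}$ transverse regularity, in the absence of compactness. In cases (iii) and (iv) obstacle (b) evaporates, since the foliation is provided directly by \cite[Chapter III]{lq95} and \cite[Theorem 2.2]{LiLuBates} respectively; this is why the theorem is cleanest under those hypotheses.
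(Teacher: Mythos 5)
Your proposal follows essentially the same route as the paper: reduce to the time-one discrete cocycle, invoke the appropriate stable manifold theorem for each hypothesis (Biskamp for (i), Ruelle via Lemma 3.9 of \cite{fgs16} for (ii), \cite{LiLuBates} for (iv)), lift to continuous time via \cite[Chapter~V, Theorem~2.2]{lq95}, and then establish the foliation over $A(\omega)$ together with disjointness by arguing that distinct points on the hyperbolic random cycle have non-converging forward orbits. The one small divergence is case (iii), where you apply \cite[Chapter~III, Theorem~3.2]{lq95} directly on the compact absorbing set, whereas the paper observes that the compactness assumptions make condition~\eqref{ass:fgs} satisfied and then reuses the argument for (ii); both are consistent with the surrounding discussion and either would serve.
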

\begin{proof}
As already discussed, in most of the cited literature, the stable manifold theorem is shown for discrete time. However, the transition to the time-continuous case, i.e.~establishing $W^{\txtf}(\omega, x) = \tilde W^{\txts}(\omega, x)$, follows immediately from the integrability assumption~\eqref{integrabilityassumpt} for the MET, as one can observe with the proof of \cite[Chapter V, Theorem 2.2] {lq95}.
Hence, the fact that the sets $W^{\txtf}(\omega, x)$ are a uniquely determined $C^{k-1}$ in $x$ family of $C^k$  $(m-1)$-dimensional submanifolds 
of the stable manifold $\mathcal W^s(\omega)$ can be deduced in various situations as follows:

Assumption (i) is derived from \cite[Theorem 4.7 and Theorem 9.1]{b12}, where $W^{\txtf}(\omega, x)$ are global stable manifolds.
Assumption (ii) is derived from \cite[Lemma 3.9]{fgs16} showing that the conditions for the local stable manifold theorem~\cite[Theorem 5.1]{r79} are satisfied, i.e.~$W^{\txtf}(\omega, x)$ is a $C^{k}$ submanifold of $\mathbb{R}^m$ of dimension $m-1$, at least within a neighbourhood $\mathcal U$ of $x$.
Furthermore, it is obvious from the assumptions that condition~\eqref{ass:fgs} is satisfied and, hence,  assumption (iii) is derived similarly to assumption (ii).
Assmuption (iv) can be taken according to \cite[Theorem 2.4]{LiLuBates}. 

This leaves to prove the foliation property in all these cases: the proof that
$$ \mathcal W^s(\omega) = \cup_{x \in A(\omega)} W^{\txtf}(\omega, x),$$
can be deducted in direct analogy to the proof of \cite[Proposition 9 (iv)]{LiLuBates}. The fact that the union is disjoint can be seen as follows: assume there is a $y \in W^{\txtf}(\omega, x) \cap W^{\txtf}(\omega, x')$ for $x\neq x'$. Since $A(\omega)$ is an invariant hyperbolic limit cycle and $x,x' \in A(\omega)$, we have that $d(\varphi(t, \omega,x), \varphi(t, \omega, x')) \geq \delta > 0$ for all $t \geq0$. Hence, we obtain by definition of $W^{\txtf}$ and the triangle inequality that
$$ 1 \leq \frac{d(\varphi(t, \omega,x), \varphi(t, \omega, y)) + d(\varphi(t, \omega,y), \varphi(t, \omega, x'))}{d(\varphi(t, \omega,x), \varphi(t, \omega, x'))} \to 0,$$
which is a contradiction (see proof of \cite[Proposition 9 (iii)]{LiLuBates} for a similar argument).
\end{proof}
\begin{remark} \label{rem:mainexample}
\begin{enumerate}
\item[(i)] One could also try to extend Theorem~\ref{thm:SMT} (ii) to the situation with any diffusion coefficients satisfying~\eqref{condition:Biskamp} instead of only additive noise. For showing this, first notice that under the assumptions on $\sigma$ the drift $\hat b= b + b_0$ with the It\^o-Stratonovich-conversion term
$$ b_0 := \frac{1}{2} \sum_{i=1}^n \sum_{j=1}^m \sigma_i^j \frac{\partial}{\partial x_j} \sigma_i$$
still satisfies assumptions~\eqref{ass:dissipative} and~\eqref{secder}.
Due to the mild behaviour~\eqref{condition:Biskamp} of the diffusion coefficients, one could then try to make analogous estimates as in \cite[Lemma 3.9]{fgs16} to induce that condition~\eqref{ass:fgs} is satisfied. Since we are mainly interested in the local behavior, we refrain from conducting such estimates here, but point out that this would be an interesting general extension.
\item[(ii)] Consider the example equation~\eqref{eq:exzz2planar} (and by that equation~\eqref{eq:exzz2}): the drift $b$ is polynomial such that condition~\eqref{secder} is satisfied and we have
\begin{align}\label{eq:b_estimate}
\langle b(x) -b(y), x-y\rangle &= \| x- y \|^2 - \|x\|^4 - \|y\|^4 + \langle x,y\rangle(\|x\|^2 + \|y\|^2)) \nonumber \\
&=  \| x- y \|^2 -  \|x\|^4 - \|y\|^4 +  \frac{1}{2}(\|x\|^2 + \|y\|^2)^2  \nonumber\\
&\quad - \frac{1}{2}(\|x\|^2 + \|y\|^2)\| x- y \|^2  \\
&= \left(1- \frac{1}{2}(\|x\|^2 + \|y\|^2)\right) \| x- y \|^2 - \frac{1}{2}(\|x\|^2 - \|y\|^2)^2  \nonumber \\
&\leq   \| x- y \|^2. \nonumber
\end{align}
Hence, also condition~\eqref{ass:dissipative} is satisfied. 
Furthermore, the unique stationary distribution $\rho$ has a density
\be \label{statdens:example}
p(x,y) = \frac{1}{Z} \left(x^2 + y^2\right)^{\frac{1}{\sigma^2}-1} \exp\left( - \frac{x^2 + y^2}{\sigma^2} \right),
\ee
solving the stationary Fokker-Planck equation.
Hence, also condition~\eqref{intlog} is fulfilled. Since the noise term is linear, we obviously have $\sum_{i=1}^n \|\sigma_i\|_{k,\delta} < \infty$ for all $k \geq 2, \delta \in (0,1]$. Hence, we could deduce the assertions of Theorem~\ref{thm:SMT} if we had proven the extension as discussed in (i).

However, for our purposes, this is not necessary:  we additionally have, using the same transformation as in estimate~\eqref{eq:b_estimate}, that for $R = \sqrt{3}$ and $\|x\|, \|y\| > R$
\begin{align*}
\langle b(x) -b(y), x-y\rangle &\leq 
\left(1- \frac{1}{2}(\|x\|^2 + \|y\|^2)\right) \| x- y \|^2 \\
&\leq  - \kappa \| x- y \|^2,
\end{align*}
where $\kappa = R^2/2 -1$. Now choosing a smooth cut-off of $\sigma$, say $\tilde \sigma$, such that $\sigma = \tilde \sigma$ on $B_{R^*}(0)$ for some large $R^* > R$ and $ \tilde \sigma \equiv 0$ on $\mathbb{R}^m \setminus B_{R^*+1}(0)$, we obtain a stationary density $\tilde p$ with $\tilde p = p/\tilde Z $  on $B_{R^*}(0)$, where $\tilde Z > 0$ is a normalization constant, and $ \tilde p \equiv 0$ on $\mathbb{R}^m \setminus B_{R^*+1}(0)$. Hence, we can apply Theorem~\ref{thm:SMT}~(iii).
In particular, note that this construction allows, when $\tilde \sigma$ is small enough, for a transformation into the random ODE~\eqref{eq:random_ODE} with sufficiently small bounded noise such that Proposition~\ref{prop:existence_of_random_cycles} and, by that, Theorem~\ref{thm:SMT}~(iv) can be applied. This procedure is, of course, independent from the particular form of equation~\eqref{eq:exzz2planar} and can be used for any SDEs around deterministic limit cycles when the transformation into the random ODE~\eqref{eq:random_ODE} is possible (which is always the case for additive and linear multiplicative noise).
\end{enumerate}
\end{remark}

Given~\eqref{eq:isochron_forward_hyperbolic}, we further assume that there exists a Crauel random periodic solution $(\psi,T)$ such that $\psi(t, \omega) \in A(\omega)$ for all $\omega \in \Omega$ and $t \geq 0$, as for example seen in Proposition~\ref{prop:crps_ex}. 
Then we can investigate the behaviour of
$$W^{\txtf}(\omega, \psi(0, \omega))=\left\{y\in \R^m:\lim_{t\ra +\I}\txtd(\varphi(t, \omega, y),\psi(t, \theta_t \omega))=0\right\}.$$
If, as in Proposition~\ref{prop:crps_ex}, each $x \in A(\omega)$ can be identified as $\psi_x(\omega,0)$ for some Crauel random periodic solution, then $T_x(\omega)$ is the period we can associate with $W^{\txtf}(\omega, \psi_x(0, \omega))$. We summarize this insight in the following definition:
\begin{defn}[Period of random forward isochron] \label{def:randomperiod}
Let $(\psi,T)$ be a Crauel random periodic solution for the RDS $(\theta, \varphi)$ such that $\psi(t, \omega) \in A(\omega)$ for all $\omega \in \Omega$ and $t \geq 0$, where $A$ is an attracting random cycle or chaotic random attractor. Then the we call $T(\omega)$ the period of the corresponding random forward isochron $W^{\txtf}(\omega, \psi(0, \omega))$ for all $\omega \in \Omega$. 
\end{defn}
The natural question arises whether
$$ \varphi(T_x(\omega), \omega, \cN_x(\omega)) \subset \cN_x(\theta_{T_x(\omega)} \omega)$$
holds for some measurable family $N_x(\omega)$ of cross-sections, in particular, whether we can identify $\cN_x(\omega)= W^{\txtf}(\omega, \psi_x(0, \omega))$. What we observe, is the following:

\begin{prop}
Let $(\theta, \varphi)$ be a random dynamical system with random attractor $A$ 
and the isochrons $W^{\txtf}(\omega, x)$ as given in \eqref{eq:isochron_forward} such that each $x \in A(\omega)$ can be identified with $\psi_x(0, \omega)$ for some Crauel random periodic solution $(\psi_x, T_x)$.
Then we have
\begin{equation} \label{invariance}
\varphi(T_x(\omega), \omega, W^{\txtf}(\omega, \psi_x(0, \omega))) \subset W^{\txtf}(\theta_{T_x(\omega)}\omega, \psi_x(T_x(\omega), \theta_{T_x(\omega)}\omega)).
\end{equation}
\end{prop}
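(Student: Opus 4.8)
The plan is to derive~\eqref{invariance} essentially for free from the pathwise invariance relation~\eqref{Wfinvariance} together with the cocycle/periodicity property~\eqref{eq:RPS_gen} of the Crauel random periodic solution $(\psi_x,T_x)$. The point I would stress at the outset is that, although~\eqref{Wfinvariance} is written for a deterministic time $s\ge 0$, its verification is entirely pathwise: fixing a realisation $\omega$ and $y\in W^{\txtf}(\omega,x)$, the cocycle identity $\varphi(t,\theta_s\omega,\varphi(s,\omega,\cdot))=\varphi(t+s,\omega,\cdot)$ together with the substitution $t\mapsto t+s$ inside the limit defining the stable set gives $\varphi(s,\omega,y)\in W^{\txtf}(\theta_s\omega,\varphi(s,\omega,x))$; moreover this holds for a.e.\ $\omega$ simultaneously for \emph{all} $s\ge 0$. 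Since for each fixed $\omega$ the quantity $T_x(\omega)$ is just a nonnegative real number, the same computation applies verbatim with $s=T_x(\omega)$.

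Concretely, I would first specialise this to $x=\psi_x(0,\omega)$, obtaining
\[
\varphi(T_x(\omega),\omega,W^{\txtf}(\omega,\psi_x(0,\omega)))\ \subseteq\ W^{\txtf}\bigl(\theta_{T_x(\omega)}\omega,\ \varphi(T_x(\omega),\omega,\psi_x(0,\omega))\bigr),
\]
and then I would rewrite the base point on the right-hand side using the invariance property~\eqref{eq:RPS_gen} of $(\psi_x,T_x)$ with $t=T_x(\omega)$, $t_0=0$, which yields
\[
\varphi(T_x(\omega),\omega,\psi_x(0,\omega))=\psi_x\bigl(T_x(\omega),\theta_{T_x(\omega)}\omega\bigr),
\]
a point that lies on $A(\theta_{T_x(\omega)}\omega)$ because $\psi_x(t,\omega)\in A(\omega)$ for all $t,\omega$. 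Substituting the second display into the first is exactly~\eqref{invariance}. I would also record that in fact one has equality here: $\varphi(T_x(\omega),\omega,\cdot)$ is a diffeomorphism of $\R^m$ in all the settings of Theorem~\ref{thm:SMT}, and the reverse inclusion in~\eqref{Wfinvariance} follows by running the same pathwise argument for $\varphi(T_x(\omega),\omega,\cdot)^{-1}$ (given $z$ in the right-hand isochron, set $y:=\varphi(T_x(\omega),\omega,\cdot)^{-1}(z)$ and check $y\in W^{\txtf}(\omega,\psi_x(0,\omega))$ via the cocycle property for $t\ge T_x(\omega)$).

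The only things requiring a little care are measurability and the ``almost all $\omega$'' qualifications. Because $\omega\mapsto T_x(\omega)$ is $\mathcal F$-measurable and the full-measure set on which the isochrons are defined (through the stable manifold theorems underlying Theorem~\ref{thm:SMT}) and on which the Multiplicative Ergodic Theorem applies can be chosen $\theta$-invariant, the shifted realisation $\theta_{T_x(\omega)}\omega$ again belongs to that set for a.e.\ $\omega$, so $W^{\txtf}(\theta_{T_x(\omega)}\omega,\cdot)$ is meaningful; joint measurability of $(\omega,x)\mapsto W^{\txtf}(\omega,x)$ is inherited from those same results. I do not expect a genuine obstacle: the statement is really just that the leafwise flow-invariance of the isochron foliation from Theorem~\ref{thm:SMT}, propagated over one random period $T_x(\omega)$, carries the isochron anchored at $\psi_x(0,\omega)$ onto the one anchored at $\psi_x(T_x(\omega),\theta_{T_x(\omega)}\omega)$. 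The closest thing to a subtlety is the legitimacy of invoking~\eqref{Wfinvariance} at the random time $T_x(\omega)$, which, as noted above, is unproblematic precisely because the identity holds pathwise for every fixed time.
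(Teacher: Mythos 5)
Your argument is correct and essentially the same as the paper's: both rely on the cocycle identity together with the CRPS invariance $\varphi(t,\omega,\psi_x(0,\omega))=\psi_x(t,\theta_t\omega)$, with the only organizational difference being that you route through the pathwise invariance relation~\eqref{Wfinvariance} evaluated at $s=T_x(\omega)$, whereas the paper verifies membership in the stable set by a direct limit computation. Your additional observation that the inclusion is in fact an equality is correct and consistent with~\eqref{Wfinvariance}, though the paper contents itself with proving the stated inclusion.
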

\begin{proof}
Let $y \in  W^{\txtf}(\omega, \psi_x(0, \omega))$. Then
\begin{align*}
&\lim_{t\ra +\I}\txtd(\varphi(t, \theta_{T_x(\omega)}\omega, \varphi(T_x(\omega), \omega, y)),\psi_x(t + T_x(\omega), \theta_{t+T_x(\omega)} \omega)) \\
 &= \lim_{t\ra +\I}\txtd(\varphi(T_x(\omega) + t, \omega, y),\psi_x(t + T_x(\omega), \theta_{t+T_x(\omega)} \omega)) \\
  &= \lim_{s\ra +\I}\txtd(\varphi(s, \omega, y),\psi_x(s, \theta_{s} \omega)) =0.
\end{align*}
Hence, the statement follows directly.
\end{proof}
\subsubsection{Pullback isochrons}
In analogy to the different notions of a random attractor, one could also consider defining fiberwise isochrons for random dynamical systems in a pullback sense, as follows: 

Again assume there is a Crauel random periodic solution $(\psi,T)$ on an attracting random cycle $A$ (or chaotic random attractor $A$). Then the \emph{random pullback isochrons} could only be defined as 
\begin{align}
\label{eq:isochron_pullback}
 W^{\txtp}(\omega, \psi(0, \omega))&:=
  \left\{y\in \R^m:\lim_{t\ra +\I}\txtd(\varphi(t, \theta_{-t} \omega, y),\varphi(t, \theta_{-t} \omega, \psi(0, \theta_{-t} \omega))=0\right\} \nonumber \\
 &= \left\{y\in \R^m:\lim_{t\ra +\I}\txtd(\varphi(t, \theta_{-t} \omega, y),\psi(t, \omega))=0\right\},
\end{align} 
for almost all $\omega \in \Omega$. 

In contrast to the random forward isochron $W^{\txtf}(\omega, \psi(0, \omega))$, the set $W^{\txtp}(\omega, \psi(0, \omega))$ is not given as a stable set for the point $\psi(0, \omega)$ but as the set of points whose pullback trajectories converge to the trajectories starting in $\psi(0, \theta_{-t} \omega)$ as $t \to \infty$. Hence, such a definition cannot coincide with a stable manifold for a given point on a given fiber of the random attractor and, in particular, there does not seem to be a way to connect the set $W^{\txtp}(\omega, \psi(0, \omega))$ to the set $W^{\txtf}(\omega, \psi(0, \omega))$. In other words, it is not clear what geometric interpretation such a random pullback isochron could have and it is apparent that the definition in forward time, i.e.~Definition~\ref{def:forwardisochron}, yields the most immediately meaningful object in this context.

\subsection{The random isochron map} 
\label{sec:randomisomap}
For the following, recall the stochastic differential equation~\eqref{SDEgen} as
\begin{equation} \label{SDEgen_invariants}
\rmd X_t = b(X_t) \rmd t + \sum_{i=1}^n \sigma_i(X_t) \circ \rmd W_t^i\,,\qquad X_0=x,
\end{equation} 
where $W_t^i$ are independent real valued Brownian motions, $b$ is a $C^k$ vector field, $k \geq 1$, and $\sigma_1, \dots, \sigma_n$ are $C^{k+1}$ vector fields satisfying bounded growth conditions, as e.g. (global) Lipschitz continuity, in all derivatives to guarantee the existence of a (global) random dynamical system with cocycle $\varphi$ and derivative cocycle $\rmD \varphi$.

\begin{ex}
As before, the main examples we have in mind are two-dimensional. In particular, we may consider the corresponding
stochastic differential equation in 
polar coordinates $(\vartheta, r) \in [0, 2 \pi) \times [0, \infty)$
\be
\label{eq:polargeneral}
\begin{array}{lcl}
\rmd \vartheta &=& f_1(\vartheta, r) \, \rmd t + \sigma_1 g_1(\vartheta, r) \circ \, \rmd W_t^1,\\
\rmd r &=&  f_2(\vartheta, r) \, \rmd t + \sigma_2 g_2(\vartheta, r) \circ \, \rmd W_t^2\,.
\end{array}
\ee
As in Examples~\ref{ex:nonfixedphase} and~\ref{ex:chaos}, we usually regard a situation
such that in the deterministic case $\sigma_1 = \sigma_2 = 0$ there is an attracting limit cycle at $r=r^* >0$.
\end{ex}

From Theorem~\ref{thm:isochonsstableman_det} recall the isochron map $\xi: W^\txts(\gamma) \to \mathbb{R} \mod \tau_{\gamma} $ for a limit cycle $\gamma$ with period $\tau_{\gamma}$, which is given for every $y \in W^\txts(\gamma)$ as the unique $t$ such that $y \in W^\txts(\gamma(t))$, i.e.
\begin{equation*}
  \lim_{s\ra +\I}\txtd(\Phi(\gamma(\xi(y)),s),\Phi(y,s))=\lim_{s\ra +\I}\txtd(\gamma(s + \xi(y)),\Phi(y,s))=0 \,.
\end{equation*}
Analogously, we now introduce the following new notion for the random case; recall that for a CRPS $(\psi, T)$ we have, in particular, that $\psi(0, \omega) = \psi(T(\omega), \omega)$ for all $\omega \in \Omega$. 

\begin{thm} \label{def:randomisochronmap}
Consider the SDE~\eqref{SDEgen_invariants} such that the induced RDS has a random attractor $A$ with CRPS $(\psi, T)$ and parametrization $A(\omega) = \{ \psi(t+s, \omega)\,: \, t \in [0, T(\theta_{-s} \omega))\}$ for all $\omega \in \Omega$, $s \in \mathbb{R}$.
Then
\begin{enumerate}
\item there exists the \emph{random isochron map} $\tilde \phi$, i.e.~a measurable function $\tilde \phi: \mathbb{R}^m \times \Omega \times \mathbb{R} \to \mathbb{R}$, $C^k$ in the phase space variable, such that in a neighbourhood $\mathcal U(\omega)$ of $ A(\omega)$  we have
$$ \tilde\phi(\cdot, \omega, s): \mathcal U(\omega)\to [s,  s + T(\theta_{-s} \omega))$$
and for each $y \in \mathcal U(\omega)$, $s \in \mathbb{R}$
\begin{align} \label{eq:random_isochron_CRPS}
&\lim_{t\ra +\I}\txtd(\varphi(t, \omega, y),\varphi(t, \omega, \psi(\tilde \phi (y, \omega,s) , \omega))) \nonumber \\
& = \lim_{t\ra +\I}\txtd(\varphi(t, \omega, y),\psi(t + \tilde \phi(y, \omega,s), \theta_t \omega))=0,
\end{align}
\item for any $\omega \in \Omega $, $s \in \mathbb{R}$ and $t \in [0, T(\theta_{-s} \omega))$, the \emph{random $\tilde \phi$-isochron} $\tilde I(\omega, \psi(t+s, \omega),s)$ given by
\begin{equation} \label{defRDSiso_x_1}
\tilde I(\omega,\psi(t+s, \omega),s) = \{ y \in \mathcal{U}(\omega) : \tilde \phi( y, \omega,s) = \tilde \phi(\psi(t+s, \omega), \omega,s)\}
\end{equation} 
satisfies
\be \label{eq:random_forward_iso}
 \tilde I(\omega, \psi(t+s, \omega),s) = W^{\txtf}( \omega, \psi(t+s,  \omega)).
 \ee
\item for any $\omega \in \Omega $, $s \in \mathbb{R}$ and $y \in \mathcal{U}(\omega)$
\be \label{eq:invariance1}
\tilde \phi ( \varphi(s, \omega, y), \theta_s \omega, s) = \tilde \phi(y, \omega,0) +s\,,
\ee
or, equivalently,
\be \label{eq:invariance_derivative}
\frac{\rmd }{\rmd s} \tilde \phi ( \varphi(s, \omega, y), \theta_s \omega, s) = 1\,.
\ee
\end{enumerate}
\end{thm}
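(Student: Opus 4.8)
The plan is to construct the random isochron map $\tilde\phi$ directly from the foliation established in Theorem~\ref{thm:SMT}, mirroring the deterministic construction of $\xi$ in Theorem~\ref{thm:isochonsstableman_det}~(3). First I would fix $s\in\mathbb{R}$ and a noise fibre $\omega$, and recall that by Theorem~\ref{thm:SMT} the stable manifold $\mathcal{W}^{\txts}(\omega)$ — of which the neighbourhood $\mathcal U(\omega)$ is a part — decomposes as the disjoint union $\bigcup_{x\in A(\omega)} W^{\txtf}(\omega,x)$. Using the CRPS parametrization $A(\omega)=\{\psi(t+s,\omega): t\in[0,T(\theta_{-s}\omega))\}$, every point $x\in A(\omega)$ is of the form $\psi(t+s,\omega)$ for a unique $t\in[0,T(\theta_{-s}\omega))$. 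Hence for $y\in\mathcal U(\omega)$ I define $\tilde\phi(y,\omega,s)$ to be the unique value $t+s\in[s,s+T(\theta_{-s}\omega))$ such that $y\in W^{\txtf}(\omega,\psi(t+s,\omega))$. The characterization~\eqref{eq:random_isochron_CRPS} is then immediate from the definition~\eqref{eq:isochron_forward} of $W^{\txtf}$ together with the CRPS identity $\varphi(t,\omega,\psi(\tau,\omega))=\psi(t+\tau,\theta_t\omega)$, and statement~(2), i.e.~\eqref{eq:random_forward_iso}, is just a restatement of the definition~\eqref{defRDSiso_x_1} of the level set, exactly as in the proof of Proposition~\ref{prop:characterize}~(1).

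For the regularity claims I would argue as follows. Measurability in $\omega$ follows because the foliation $(\omega,y)\mapsto W^{\txtf}(\omega,y)$ and the CRPS $\psi$ are $\mathcal{F}$-measurable by construction, and $\tilde\phi$ is obtained from these by a measurable selection (the map $y\mapsto$ the unique leaf through $y$). The $C^k$ smoothness of $\tilde\phi(\cdot,\omega,s)$ in the phase variable follows from the fact that the leaves $W^{\txtf}(\omega,x)$ depend $C^{k-1}$-smoothly on $x\in A(\omega)$ and are themselves $C^k$ submanifolds (Theorem~\ref{thm:SMT}), so that $\mathcal U(\omega)$ carries a $C^k$ foliation chart in which $\tilde\phi(\cdot,\omega,s)$ is, up to the reparametrization $t\mapsto\psi(t+s,\omega)$ of the transversal $A(\omega)$, the transverse coordinate; smoothness of this reparametrization is inherited from smoothness of $\psi$ in its time argument, which holds because $\psi$ lies on trajectories of the $C^k$ cocycle $\varphi$.

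The invariance identity~\eqref{eq:invariance1} is the core dynamical statement. I would prove it by applying the cocycle-invariance of forward isochrons, equation~\eqref{Wfinvariance}: if $\tilde\phi(y,\omega,0)=\tau$, then $y\in W^{\txtf}(\omega,\psi(\tau,\omega))$, hence by~\eqref{Wfinvariance} $\varphi(s,\omega,y)\in W^{\txtf}(\theta_s\omega,\varphi(s,\omega,\psi(\tau,\omega)))=W^{\txtf}(\theta_s\omega,\psi(\tau+s,\theta_s\omega))$, using the CRPS relation once more. By definition of $\tilde\phi(\cdot,\theta_s\omega,s)$ this means $\tilde\phi(\varphi(s,\omega,y),\theta_s\omega,s)=\tau+s=\tilde\phi(y,\omega,0)+s$, which is~\eqref{eq:invariance1}; differentiating in $s$ gives~\eqref{eq:invariance_derivative}. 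The main obstacle I anticipate is not any single step but rather making the bookkeeping of the three arguments $(y,\omega,s)$ consistent: one must verify that the range interval $[s,s+T(\theta_{-s}\omega))$ is the correct one (so that the parametrization is well-defined and single-valued modulo the period), and that the CRPS reparametrization identity $\psi(t+s,\omega)=\psi(t+s+T(\theta_{-(t+s)}\omega),\omega)$ is compatible with the shift $\theta_s$ used in~\eqref{eq:invariance1} — this is precisely the delicate point already flagged in Remark~\ref{rem:crps_alternatives} about why the period must be evaluated at $\theta_{-t}\omega$ rather than at $\omega$ or $\theta_t\omega$. Once that compatibility is checked, all three statements follow by the routine manipulations sketched above.
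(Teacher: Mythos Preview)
Your proposal is correct and follows essentially the same route as the paper: define $\tilde\phi(y,\omega,s)$ as the unique $t+s$ with $y\in W^{\txtf}(\omega,\psi(t+s,\omega))$ via the disjoint foliation of Theorem~\ref{thm:SMT}, read off statements~(1) and~(2) directly from this definition, and deduce~\eqref{eq:invariance1} from the cocycle invariance~\eqref{Wfinvariance} of the forward isochrons together with the CRPS identity $\varphi(s,\omega,\psi(\tau,\omega))=\psi(\tau+s,\theta_s\omega)$. Your treatment of the regularity claims (foliation charts for $C^k$-smoothness, measurable selection for $\omega$-measurability) is more explicit than the paper's, which is rather terse on these points, but the underlying argument is the same.
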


\begin{proof}
Since $A(\omega)$ is a random attractor, we have that for given $y \in \mathcal{U}(\omega)$ there is an $x \in A(\omega)$ such that $y \in W^{\txtf}(\omega, x)$. Due to the assumptions, for any $s \in \mathbb{R}$ there is a $t_x \in [0, T(\theta_{-s}\omega))$ such that $x = \psi(s + t_x, \omega)$. Then $\tilde \phi(y, \omega,s) := t_x + s$ satisfies the required properties, where measurability follows from the measurability of $T$ and, writing $t=t_x$, differentiability from
$$ \tilde I(\omega, \psi(t+s, \omega),s) = W^{\txtf}(\omega, \psi(t+s, \omega)),$$
which can be deduced as follows: we have $y \in \tilde I(\omega, \psi(t+s, \omega),s) $ if and only if
$\tilde \phi(y, \omega,s) = \tilde \phi(\psi(t+s, \omega), \omega,s) =t + s$ which, according to equation~\eqref{eq:random_isochron_CRPS}, is equivalent to
$$ \lim_{r\ra +\I}\txtd(\varphi(r, \omega, y),\varphi(r, \omega, \psi(t +s, \omega)))=0,$$
which is the case if and only if $y \in  W^{\txtf}(\omega, \psi(t+s, \omega))$.

It remains to show the third point: firstly, we derive from the invariance of the stable manifolds and equality~\eqref{eq:random_forward_iso} that
\begin{align} \label{eq:random_iso_invar}
&\varphi(s, \omega, \cdot) \tilde I (\omega, \psi(t, \omega), 0) = \varphi(s, \omega, \cdot) W^{\txtf}(\omega, \psi(t, \omega)) \nonumber\\
&= W^{\txtf}(\theta_s \omega, \psi(t+s, \theta_s \omega)) =  \tilde I (\theta_s \omega, \psi(t+s, \theta_s \omega), s)\,.
\end{align}
This means that for $x \in \mathcal{U}(\theta_s \omega)$ we have that $x = \varphi(s,\omega,y)$ for some $y \in \mathcal{U}( \omega)$ with $\tilde \phi (y, \omega,0) = t \in [0, T(\omega))$ if and only if 
$$ \tilde \phi (x, \theta_s \omega, s) = \tilde \phi (\psi(t+s, \theta_s \omega), \theta_s \omega, s) = t+s \,.$$
Hence, we obtain equation~\eqref{eq:invariance1}, or equivalently equation~\eqref{eq:invariance_derivative}, for any $y \in \mathcal{U}( \omega)$.
\end{proof}
Note that, due to the time dependence, we always give the image of the random isochron map $\tilde \phi(\cdot, \omega, s)$ as an interval $[s, s + T(\theta_{-s} \omega))$, in distinction from the deterministic case where the values of the isochron map $\xi$ lie in $\mathbb{R} \mod \tau_{\gamma}$, which can be identified with $[0, \tau_{\gamma})$, for fixed period $\tau_{\gamma}$ (see Proposition~\ref{prop:characterize}).
We are adding a couple of further remarks to the last theorem in order to highlight its coherence with the above and the analogy to the deterministic case.
\begin{remark} \label{rem:randomisochrons}
\begin{enumerate}
\item[(i)] As seen in the proof of Theorem~\ref{def:randomisochronmap}, note that for all $s \in \mathbb{R}$, $ t \in [0, T(\theta_{-s} \omega))$
\be \label{eq:phi_psi}
\tilde \phi(\psi(t+s, \omega), \omega, s) = t + s,
\ee
and, in particular, 
\be \label{eq:rewritephi}
\tilde \phi( \varphi(t, \theta_{-t} \omega, \psi(0,\theta_{-t} \omega)), \theta_t (\theta_{-t} \omega), 0) = \tilde \phi (\psi(t, \omega), \omega, 0) = t \fa t \in [0, T(\omega)).
\ee
Additionally, observe that the parametrization of the random attractor in Theorem~\ref{def:randomisochronmap} is generally possible when there is a CRPS; with Definition~\ref{def:CRPS} we have for all $s \geq 0$ that  $\psi(s + T(\omega), \theta_s \omega) = \psi(s, \theta_s \omega)$ and, hence, we can also consider 
$$A(\theta_s \omega) = \{ \psi(t + s, \theta_s \omega)\,: \, t \in [0, T(\omega))\},$$
for which we find, for $t \in [0, T(\omega))$,
$$ \tilde\phi(\cdot, \theta_s \omega, s): \mathcal U(\theta_s \omega)\to [s, s+ T(\omega)),  \ \tilde \phi (\psi(t+s, \theta_s \omega), \theta_s \omega, s) = t+s.$$
\item[(ii)]
From Proposition~\ref{prop:characterize} recall that the isochron map $\xi: W^\txts(\gamma) \to \mathbb{R} \mod \tau_{\gamma} $ for a deterministic limit cycle $\gamma$ satisfies equation~\eqref{eq:invariance_det}
$$
\frac{\txtd}{\txtd t} \xi (\Phi(y,t)) = 1 \ \text{ for all } t \geq 0, \, y \in W^\txts(\gamma)\,.
$$
Equation~\eqref{eq:invariance_derivative} is the analogous equation for the random dynamical system.
\item[(iii)]
In certain cases, it may be convenient to anchor the random $\tilde \phi$-isochrons at the deterministic limit cycle to compare with the averaging approaches from the physics literature later on.
Consider for example the SDE~\eqref{eq:polargeneral} with attracting limit cycle at $r=r^* >0$ in the deterministic case $\sigma_1 = \sigma_2 = 0$. 
We can then write the random isochron map $\tilde \phi: [0, 2 \pi) \times [0, \infty) \times \Omega \times \mathbb{R} \to \mathbb{R}$ such that in a neighbourhood $\mathcal U$ of the circle $\{r=r^*\}$ we have
$$ \tilde\phi(\cdot, \omega, s): \mathcal U \to [s,  s + T(\theta_{-s} \omega))$$
and, based on equations~\eqref{eq:invariance_derivative} and~\eqref{eq:invariance1},
\be \label{eq:invariance2}
\tilde \phi ( \varphi(s, \omega, (\vartheta_0, r_0)), \theta_s \omega, s) = \tilde \phi((\vartheta_0, r_0), \omega,0) +s\,,
\ee
or equivalently
\begin{equation} \label{dephasedefRDS2}
\rmd\, \tilde \phi(\varphi(s, \omega,(\vartheta_0, r_0)), \theta_s \omega, s) = 1 \, \rmd s\,,
\end{equation} 
for any $(\vartheta_0, r_0) \in \cU$, $s \in \mathbb{R}$ and $\omega \in \Omega$.
For any $\vartheta \in [0, 2 \pi)$, $s \in \mathbb{R}$ and $\omega \in \Omega$, we can write $\tilde I_{\vartheta}(\omega,s)$ for the level set
\begin{equation} \label{defRDSiso_2}
\tilde I_{\vartheta}(\omega,s) = \{ (\tilde \vartheta, \tilde r) \in \cU : \tilde \phi( (\tilde \vartheta, \tilde r), \omega, s) = \tilde \phi((\vartheta, r^*), \omega, s)\}.
\end{equation} 
\end{enumerate}
\end{remark}

Following Theorem~\ref{def:randomisochronmap}, we can simply define isochrons for any point $ x \in \mathcal U(\omega)$ by setting
\begin{equation} \label{defRDSiso_x_2}
\tilde I(\omega,x,s) := \tilde I(\omega,\psi(t+s, \omega),s) \ \text{ for } x \in \tilde I(\omega,\psi(t+s, \omega),s), t \in [0, T(\theta_{-s} \omega))\,.
\end{equation} 

We can then show the invariance of $\tilde I(\omega, x, 0)$ under the RDS, similarly to the invariance property~\eqref{Wfinvariance} of the forward isochrons, extending property~\eqref{eq:random_iso_invar} to any $x \in \mathcal U(\omega)$.
\begin{prop} \label{prop:inv_random_iso}
The random $\tilde \phi$-isochrons $\tilde I(\omega, x, 0)$ for $x \in \mathcal{U}(\omega)$ where $\mathcal{U}(\omega)$ is an attracting neighbourhood of $ A(\omega)$, are forward-invariant under the RDS cocycle, i.e.
\be  \label{eq:randisoinv}
\varphi(s,\omega) \tilde I(\omega, x, 0) \subset \tilde I (\theta_s \omega, \varphi(s,\omega,x), s) \ \text{ for almost all } \omega \in \Omega  \text{ and all } x \in \cU(\omega), s \geq 0.
\ee
\end{prop}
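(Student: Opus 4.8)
The plan is to derive the claimed inclusion directly from the invariance~\eqref{Wfinvariance} of the random forward isochrons together with the identification~\eqref{eq:random_forward_iso} of $\tilde\phi$-isochrons with forward isochrons established in Theorem~\ref{def:randomisochronmap}; in effect, Proposition~\ref{prop:inv_random_iso} is nothing but the extension of the chain of equalities~\eqref{eq:random_iso_invar} from points $x \in A(\omega)$ to arbitrary points $x$ in the attracting neighbourhood $\cU(\omega)$.

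First I would fix $\omega \in \Omega$, $s \geq 0$ and $x \in \cU(\omega)$. Since $A$ is a random attractor with parametrization $A(\omega) = \{ \psi(t, \omega) : t \in [0, T(\omega)) \}$, the defining relation~\eqref{defRDSiso_x_2} (taken with $s = 0$) provides a $t \in [0, T(\omega))$ with $x \in \tilde I(\omega, \psi(t, \omega), 0)$, hence $\tilde I(\omega, x, 0) = \tilde I(\omega, \psi(t, \omega), 0)$, which by~\eqref{eq:random_forward_iso} equals $W^{\txtf}(\omega, \psi(t, \omega))$ (within $\cU(\omega)$).

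Next I would push this forward under the cocycle. By~\eqref{Wfinvariance} we have $\varphi(s, \omega) W^{\txtf}(\omega, \psi(t, \omega)) = W^{\txtf}(\theta_s\omega, \varphi(s, \omega, \psi(t, \omega)))$, and the second defining property of a Crauel random periodic solution in Definition~\ref{def:CRPS} gives $\varphi(s, \omega, \psi(t, \omega)) = \psi(s+t, \theta_s\omega)$, so the right-hand side becomes $W^{\txtf}(\theta_s\omega, \psi(s+t, \theta_s\omega))$. Now I would translate this back into $\tilde\phi$-isochron language on the shifted fibre: by Remark~\ref{rem:randomisochrons}(i) the fibre $A(\theta_s\omega)$ carries the parametrization $\{ \psi(t'+s, \theta_s\omega) : t' \in [0, T(\omega)) \}$ with $\tilde\phi(\psi(t'+s, \theta_s\omega), \theta_s\omega, s) = t'+s$, so~\eqref{eq:random_forward_iso} applied on the fibre $\theta_s\omega$ with shift $s$ yields $W^{\txtf}(\theta_s\omega, \psi(s+t, \theta_s\omega)) = \tilde I(\theta_s\omega, \psi(s+t, \theta_s\omega), s)$. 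Since $\varphi(s, \omega, x) \in \varphi(s, \omega) W^{\txtf}(\omega, \psi(t, \omega))$, it lies in $\tilde I(\theta_s\omega, \psi(s+t, \theta_s\omega), s)$, and~\eqref{defRDSiso_x_2} then identifies $\tilde I(\theta_s\omega, \varphi(s, \omega, x), s) = \tilde I(\theta_s\omega, \psi(s+t, \theta_s\omega), s)$. Chaining the equalities gives $\varphi(s, \omega) \tilde I(\omega, x, 0) \subset \tilde I(\theta_s\omega, \varphi(s, \omega, x), s)$, the inclusion rather than equality simply reflecting that $\varphi(s, \omega)$ need not map $\cU(\omega)$ onto $\cU(\theta_s\omega)$.

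The only delicate point — bookkeeping rather than a genuine obstacle — is keeping the CRPS time indices consistent: one must confirm that the parameter $t \in [0, T(\omega))$ labelling the isochron through $x$ on the fibre $\omega$ is exactly the parameter (up to the additive shift $s$) labelling its image on the fibre $\theta_s\omega$. This is precisely what Remark~\ref{rem:randomisochrons}(i) and the identity $\psi(s + T(\omega), \theta_s\omega) = \psi(s, \theta_s\omega)$ from Definition~\ref{def:CRPS} guarantee; once these are in place, every step above is a direct substitution of results already proved.
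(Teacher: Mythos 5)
Your proof is correct, but it takes a genuinely different route from the paper's. The paper's own proof is a short algebraic argument that applies the already-established invariance identity~\eqref{eq:invariance1}, $\tilde\phi(\varphi(s,\omega,y),\theta_s\omega,s)=\tilde\phi(y,\omega,0)+s$, pointwise: take $y=\varphi(s,\omega,z)$ with $\tilde\phi(z,\omega,0)=\tilde\phi(x,\omega,0)$, apply~\eqref{eq:invariance1} to both $z$ and $x$, and read off that $\tilde\phi(y,\theta_s\omega,s)=\tilde\phi(\varphi(s,\omega,x),\theta_s\omega,s)$. That is the whole proof. You instead go through the geometric picture: you invoke the identification $\tilde I = W^{\txtf}$ from~\eqref{eq:random_forward_iso}, the cocycle invariance~\eqref{Wfinvariance} of the random forward isochrons, the CRPS relation $\varphi(s,\omega,\psi(t,\omega))=\psi(s+t,\theta_s\omega)$, and the parametrization bookkeeping from Remark~\ref{rem:randomisochrons}(i). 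This is essentially re-deriving, for arbitrary $x\in\cU(\omega)$, the chain of equalities~\eqref{eq:random_iso_invar} — which you say explicitly. Both routes buy the same conclusion; the paper's version is shorter because it treats Theorem~\ref{def:randomisochronmap}(3) as the lemma that packages all the $W^{\txtf}$/CRPS reasoning, while you unpack that lemma's content again inside the proof of the proposition. Your version arguably makes the geometric mechanism more visible, at the cost of redoing work. One small remark: your chain of equalities actually yields \emph{equality} of the two sets (as subsets of the ambient space), and you attribute the weakening to inclusion to $\varphi(s,\omega)$ not mapping $\cU(\omega)$ onto $\cU(\theta_s\omega)$; the paper's proof has the same tacit assumption (that $\varphi(s,\omega)$ maps $\cU(\omega)$ into $\cU(\theta_s\omega)$, so that the image point actually lies in the domain where $\tilde I(\theta_s\omega,\cdot,s)$ is defined), so neither version handles this with more rigour than the other.
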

\begin{proof}
Let $y \in \varphi(s,\omega) \tilde I( \omega, x, 0)$. This means that there is a $z \in \mathbb{R}^m$ such that $y = \varphi(s, \omega,z)$ and $\tilde \phi (z, \omega, 0) = \tilde \phi (x, \omega, 0)$. We obtain from equation~\eqref{eq:invariance1} that
\begin{align*}
\tilde \phi (y, \theta_s \omega, s) &= \tilde \phi (\varphi(s, \omega,z), \theta_s \omega, s) \\
&= \tilde \phi(z, \omega, 0) + s \\
&=\tilde \phi(x, \omega, 0) + s\\
&= \tilde \phi (\varphi(s, \omega,x), \theta_s \omega, s).
\end{align*}
Hence, we have $y \in  \tilde I (\theta_s \omega, \varphi(s,\omega,x), s)$ and therefore 
$$ \varphi(s,\omega) \tilde I(\omega, x,0) \subset \tilde I (\theta_s \omega, \varphi(s,\omega,x),s).$$
This finishes the proof.
\end{proof}

\subsection{Stochastic isochrons via mean return time and random isochrons}
\label{sec:connection_physics}

One main approach to define stochastic
isochrons in the physics literature is due to Schwabedal and Pikovsky \cite{SchwabedalPikovsky} who introduce isochrons (or isophase surfaces) for noisy systems as sections $W^\E(x)$ with the mean first return time to the same section $W^\E(x)$ being a constant $\bar T$, equaling the average oscillation period. Note that such an object is not well-defined a priori as it seems unclear, what we imply here by ``return'', i.e., return to what?
The paper does not rigorously establish these objects but only gives a numerical algorithm which is successfully tested at the hand of several examples. 
According to the algorithm, a deterministic starting section $\cN$ is adjusted according to the mean return time, i.e., points are moved correspondent to the mismatch of their return time and the mean period for $\cN$, and this procedure is repeated until all points have the same mean return time. 
\subsubsection{The modified Andronov-Vitt-Pontryagin formula in \cite{caolindnerthomas19}}
\label{sec:caoetal}
Cao, Lindner and Thomas \cite{caolindnerthomas19} have made this approach rigorous by using a modified version of the Andronov-Vitt-Pontryagin formula for the mean first passage time (MFPT) $\tau_D$ on a bounded domain $D$ through its boundary $\partial D$. In more detail (cf.~\cite[Chapter 4.4]{schuss10}), the associated boundary value problem for $\mathcal L$ denoting the generator of the process, also called \emph{backward Kolmogorov operator}, is given by
\begin{equation} \label{Dynkin}
\mathcal{L} u(x) = -1 \fa x \in D, \quad u(x) = 0 \fa x \in \partial D,
\end{equation}
which is solved by 
$$ u(x) = \mathbb{E}[\tau_D | x(0) =x].$$ The problem in our case is that if we consider a domain whose absorbing boundary in $\theta$-direction is a line 
$\tilde l := \{(\tilde \vartheta(\tilde r), \tilde r) \,:\, R_1 \leq \tilde r \leq R_2 \}  $, where $\tilde \vartheta$ is a smooth function, the stochastic motion might not perform a full rotation to reach this boundary line. In particular, the mean return time for trajectories starting on $\tilde l$ will be zero. To circumvent this problem, Cao et al.~unwrap the phase by considering infinite copies of $\tilde l$ on the extended domain $ \mathbb{R} \times [R_1, R_2]$. For some $(\vartheta,r)$ with $\vartheta < 2 \pi < \tilde \vartheta (r)$, the mean first passage time $T(\vartheta, r)$ is then calculated  via the Andronov-Vitt-Pontryagin formula with periodic-plus-jump boundary condition in the $\vartheta$-direction and reflecting boundary condition in the $r$-direction.

In more detail, the process solving equation~\eqref{eq:polargeneral}, or its It\^o version respectively, with strongly elliptic generator $\mathcal L$ and its adjoint $\mathcal L^*$, the \emph{forward Kolmogorov operator}, is assumed to have a unique stationary density $\rho$ on $\Omega = [0, 2 \pi) \times [R_1, R_2]$ solving the stationary Fokker-Planck equation
$$ \mathcal{L}^* \rho = 0\,,$$
with reflecting (Neumann) boundary conditions at $r \in \{R_1, R_2\}$ and periodic boundaries $\rho(0,r) = \rho(2 \pi ,r)$ for all $r \in [R_1, R_2]$.
For model~\eqref{eq:polargeneral}, the stationary probability current $J_{\rho}$ reads, for $j=1,2$,
$$ J_{\rho, j} (\vartheta, r) = \left(f_j(\vartheta, r) + \frac{1}{2}g_j(\vartheta, r) \partial_j g_j(\vartheta, r) \right) \rho(\vartheta, r) - \frac{1}{2} \partial_j \left(g_j^2(\vartheta,r) \rho(\vartheta, r) \right). $$
Furthermore, for a $C^1$-function $\gamma: [R_1, R_2] \to [0, 2 \pi]$ the graph $C_{\gamma}$ (cf.~$\tilde l$ above) separates the domain $\Omega_{\textnormal{ext}}= \mathbb{R} \times [R_1, R_2]$ into a left and right connected component, with unit normal vector $n(r)$ oriented to the right. It is then assumed that the mean rightward probability flux through $C_{\gamma}$ is positive, which means that
\be \label{eq:probflux}
 \bar J_{\rho} := \int_{R_1}^{R_2} n^{\top}(r) J_{\rho} (\gamma(r),r)
 \, \rmd r > 0.
\ee
The mean period of the oscillator is then given as
\be \label{eq:meantime}
\bar T = \frac{1}{\bar J_{\rho}}\,.
\ee
The modified Andronov-Vitt-Pontryagin formula is then given by the following PDE, with reflecting and jump-periodic boundary conditions
\begin{align} \label{eq:Caoetal_PDE}
\mathcal L T &= -1, \quad \text{on } \Omega, \nonumber \\
g_2^2(\vartheta, r) \partial_2 T(\vartheta, r) &= 0, \quad \forall \vartheta \in \mathbb{R}, r \in \{R_1, R_2\}\,\\
T(\vartheta, r) - T(\vartheta + 2 \pi, r) &= \bar T, \quad \forall (\vartheta, r) \in \Omega_{\textnormal{ext}} \nonumber\,.
\end{align}
In fact, the last condition can be weakened to
\be \label{eq:milder_condition}
 T(0, r) - T(2 \pi, r) = \bar T, \quad \forall r \in [R_1,R_2].
\ee

Under the discussed assumptions, it is then shown in \cite[Theorem 3.1]{caolindnerthomas19} that the equation has a solution $T(\vartheta, r)$ on $\Omega_{\textnormal{ext}}$ and, hence by restriction, on $\Omega$, which is unique up to an additive constant. The level sets of $T(\vartheta,r)$ are then supposed to be the stochastic isochrons $W^\E((\vartheta,r))$ with mean return time $\bar T$ and associated isophase (up to some constant $\bar \Theta_0$)
$$ \bar \Theta(\vartheta, r) = - T(\vartheta, r) \frac{2 \pi}{\bar T}\,, $$
which therefore satisfies
\be \label{eq:BKstat_cao}
\mathcal{L} \bar \Theta = \frac{2 \pi}{\bar T}.
\ee
\subsubsection{Relation to random isochrons}
Recall from Definition~\ref{def:randomperiod} that, for a CRPS $(\psi,T)$, the random period $T(\omega)$ corresponds to the random forward isochron $W^{\txtf}(\omega, \psi(0, \omega))$ for all $\omega \in \Omega$. Hence, we can define the expected period as
\be \label{eq:expectedperiod_RDS}
\bar T_{\textnormal{RDS}} := \mathbb{E} [T(\cdot)]\,,
\ee 
where the 
index \emph{RDS} indicates the random dynamical systems perspective.
In the following, we discuss how $\bar T_{\textnormal{RDS}}$ is related to $\bar T$ and the isochron function $\bar \Theta$ \eqref{eq:BKstat_cao}.
\subsubsection*{Expectation of random period}
Similarly to Section~\ref{sec:caoetal}, consider equation~\eqref{eq:polargeneral} in an annulus $\mathcal R$ given by $0 \leq R_1 \leq r \leq R_2 \leq \infty$, i.e.~including the full space case $\mathcal R = [0, \infty) \times [0, 2 \pi)$.
Consider the slightly modified version of the PDE system~\eqref{eq:Caoetal_PDE}
\begin{align} \label{eq:expt_time_PDE}
\mathcal L u &= -1, \quad \text{on } (-2 \pi, 2 \pi) \times (R_1, R_2) , \nonumber \\
 u(\pm 2 \pi, r) -  u(0, r) &= \bar T, \quad \forall r \in (R_1,R_2)\,,
\end{align}
where for the case $R_1 > 0, R_2 < \infty$ one can take again Neumann boundary conditions 
$$g_2^2(\vartheta, r) \partial_2 u(\vartheta, r) = 0, \quad \forall \vartheta \in \mathbb{R}, r \in \{R_1, R_2\}.$$ 
Then we can formulate the following observation.
\begin{prop} \label{prop:expect_period}
Assume that system~\eqref{eq:polargeneral} has a CRPS $(\psi,T)$, fixing $\psi(0, \omega) \in \{0\} \times (R_1, R_2)$ for $0 \leq R_1 < R_2 \leq \infty$, where the RDS and its attracting random cycle are supported within $(R_1, R_2) \times [0, 2 \pi)$ (see Theorem~\ref{thm:SMT} and Remark~\ref{rem:mainexample}).
Then we obtain that
\begin{enumerate}
\item[(a)] the expectation of the random period is given by
\be \label{eq:expectation_T}
\mathbb{E} [T(\cdot)] =  \mathbb{E} [u(\psi(- T(\cdot),\theta_{-T(\cdot)}(\cdot)))] - \mathbb{E} [u(\psi(0,\cdot))],
\ee
where $u$ solves equation~\eqref{eq:expt_time_PDE},
\item[(b)] and, in particular, if the radial components of $\psi(0,\cdot)$ and $\psi(- T(\cdot),\theta_{-T(\cdot)}(\cdot))$ are equally distributed on $(R_1, R_2)$, we have
$$\bar T = T_{\textnormal{RDS}} = \mathbb{E} [T(\cdot)]\,.$$
\end{enumerate}
\end{prop}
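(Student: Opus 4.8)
The plan is to reduce the proposition to the classical Dynkin (Andronov--Vitt--Pontryagin) identity for $u$ together with the algebraic relations defining a CRPS. I work on the unwrapped phase cylinder $\Omega_{\textnormal{ext}}=\mathbb{R}\times(R_1,R_2)$, and I extend $u$ from~\eqref{eq:expt_time_PDE} $2\pi$-quasiperiodically in the phase variable so that $\mathcal L u=-1$ on all of the strip $(-2\pi,2\pi)\times(R_1,R_2)$ and $u(\vartheta-2\pi,r)-u(\vartheta,r)=\bar T$. The trajectory to which the identity will be applied is the one-period piece of the CRPS ending at $\psi(0,\omega)$: by the cocycle relation of Definition~\ref{def:CRPS} (with $\omega'=\theta_{-T(\omega)}\omega$, $t=T(\omega)$, $t_0=-T(\omega)$) one has $\varphi(T(\omega),\theta_{-T(\omega)}\omega,\psi(-T(\omega),\theta_{-T(\omega)}\omega))=\psi(0,\omega)$, and by the periodicity relation $\psi(s,\omega')=\psi(s+T(\theta_{-s}\omega'),\omega')$ one gets $\psi(-T(\omega),\theta_{-T(\omega)}\omega)=\psi(0,\theta_{-T(\omega)}\omega)$, which lies in $\{0\}\times(R_1,R_2)$ by hypothesis. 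Thus, for a.e.\ $\omega$, the path $Z^\omega_s:=\varphi(s,\theta_{-T(\omega)}\omega,\psi(-T(\omega),\theta_{-T(\omega)}\omega))=\psi(s-T(\omega),\theta_{s-T(\omega)}\omega)$, $s\in[0,T(\omega)]$, is a solution of~\eqref{eq:polargeneral} which, lifted to $\Omega_{\textnormal{ext}}$, starts at unwrapped phase $-2\pi$ and ends at unwrapped phase $0$, with $T(\omega)$ the first time it completes a full revolution; in the phase-monotone case of Example~\ref{ex:nonfixedphase}(a), and more generally once the orientation of the net rotation over a period is fixed, it remains in $(-2\pi,2\pi)\times(R_1,R_2)$ (using the Neumann condition in $r$ when $R_1>0$, $R_2<\infty$).

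For part~(a) I would then apply Dynkin's formula: since $\mathcal L u=-1$, the process $s\mapsto u(Z^\omega_s)+s$ is a local martingale along the solution, and optional stopping at $s=T(\omega)$ yields
\begin{equation*}
\mathbb{E}\big[u(\psi(0,\cdot))\big]+\mathbb{E}\big[T(\cdot)\big]=\mathbb{E}\big[u(\psi(-T(\cdot),\theta_{-T(\cdot)}(\cdot)))\big],
\end{equation*}
which is~\eqref{eq:expectation_T}; boundedness of $u$ on the strip also shows $\mathbb{E}[T]<\infty$. To make the optional stopping legitimate one conditions on the $\sigma$-algebra generated by the noise up to the beginning of that period, which renders $Z^\omega_0$ deterministic and turns $T(\omega)$ into a stopping time of the (independent) subsequent increments, and then lets $s\wedge T(\omega)\uparrow T(\omega)$ using a.s.\ finiteness of $T$ and boundedness of $u$.

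For part~(b) I would combine~\eqref{eq:expectation_T} with the jump condition $u(-2\pi,r)-u(0,r)=\bar T$ from~\eqref{eq:expt_time_PDE}. Writing $\hat r(\omega')$ for the radial component of $\psi(0,\omega')$, the identity $\psi(-T(\omega),\theta_{-T(\omega)}\omega)=\psi(0,\theta_{-T(\omega)}\omega)$, lifted at unwrapped phase $-2\pi$, gives $u(\psi(-T(\omega),\theta_{-T(\omega)}\omega))=u(0,\hat r(\theta_{-T(\omega)}\omega))+\bar T$, whereas $u(\psi(0,\omega))=u(0,\hat r(\omega))$. By the equidistribution hypothesis $\hat r(\theta_{-T(\cdot)}(\cdot))\stackrel{d}{=}\hat r(\cdot)$, so $\mathbb{E}[u(0,\hat r(\theta_{-T(\cdot)}(\cdot)))]=\mathbb{E}[u(0,\hat r(\cdot))]=\mathbb{E}[u(\psi(0,\cdot))]$, and~\eqref{eq:expectation_T} collapses to $\mathbb{E}[T]=\bar T$. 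Since $\bar T_{\textnormal{RDS}}:=\mathbb{E}[T(\cdot)]$ by~\eqref{eq:expectedperiod_RDS}, this is exactly $\bar T=\bar T_{\textnormal{RDS}}=\mathbb{E}[T(\cdot)]$.

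The main obstacle is making the Dynkin step of part~(a) fully rigorous, because the random period $T(\omega)$ is entangled with the very Brownian path driving $Z^\omega$: one must isolate the correct sub-$\sigma$-algebra --- intuitively ``the noise before the start of that period'' --- with respect to which the initial point $\psi(0,\theta_{-T(\omega)}\omega)$ is measurable while $T(\omega)$ is a genuine stopping time of the remaining, independent increments, together with the (uniform) integrability needed for optional stopping. A secondary point is to ensure the one-period trajectory does stay in $(-2\pi,2\pi)\times(R_1,R_2)$, which is immediate in the phase-monotone model but requires controlling the net rotation over a period when there is phase noise.
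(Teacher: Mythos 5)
Your argument follows the paper's own proof: both parts (a) and (b) are established exactly as you do, namely by applying Dynkin's formula for $u$ solving \eqref{eq:expt_time_PDE} to the one-period piece of the trajectory $s\mapsto\psi(s,\theta_s\omega)$ on $[-T(\omega),0]$, and then inserting the jump condition of \eqref{eq:expt_time_PDE} together with the radial-equidistribution hypothesis into \eqref{eq:expectation_T}. Your explicit CRPS identity $\psi(-T(\omega),\theta_{-T(\omega)}\omega)=\psi(0,\theta_{-T(\omega)}\omega)$, the care over which lift to $\Omega_{\textnormal{ext}}$ keeps the one-period path in the strip, and your candid flag on the filtration/optional-stopping subtlety are welcome elaborations, but the route is the same as the paper's.
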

\begin{proof}
As we have seen in the proof of Proposition~\ref{prop:crps_ex}, the period $T(\omega)$ has to satisfy for system~\eqref{eq:polargeneral}
$$T(\omega) = \inf \left\{ t > 0: \left| \int_{-t}^{0} f_1(\psi(s, \theta_s \omega)) \rmd s +  \sigma_1 \int_{-t}^{0} g_1(\psi(s,\theta_s \omega)) \circ \,  \rmd W_s^2(\omega) \right| = 2 \pi \right\}\,.$$
Hence, using Dynkin's equation for the solution $u$ of the boundary value problem~\eqref{eq:expt_time_PDE}, we obtain
\begin{align*}
\mathbb{E}[u(\psi(0,\cdot))] &= \mathbb{E}[u(\psi(-T(\cdot), \theta_{-T(\cdot)}(\cdot)))] + \mathbb{E}\left[ \int_{-T(\omega)}^0 L u(\psi(s, \theta_s \cdot))\, \rmd s \right]\\
&= \mathbb{E}[u(\psi(-T(\cdot), \theta_{-T(\cdot)}(\cdot)))] - \mathbb{E}[T(\cdot)],
\end{align*}
which shows claim (a).

Claim (b) follows straightforwardly, inserting~\eqref{eq:expt_time_PDE} into~\eqref{eq:expectation_T}.
\end{proof}
Note that this result is consistent with the basic Example~\ref{ex:basic_fixed_per}, where we have $T(\omega) = \bar T$ for all $\omega  \in \Omega$ since in this case $\psi(0,\cdot)$ and $\psi(- T(\cdot),\theta_{-T(\cdot)}(\cdot))$ are both distributed according to the stationary radial solution $r^*(\omega)$. 
Addtionally note that $u$ is the isochron function via mean return time, as discussed in Section~\ref{sec:caoetal}.

\subsubsection*{Expectation of isochron function}
Furthermore, we want to give an alternative derivation to Section~\ref{sec:caoetal} of an isochron function $\bar \phi((\vartheta,r)): \mathcal{R} \to \mathbb{R}$, yielding the sections $W^\E((\vartheta,r))$ with fixed mean return time given as level sets
\be \label{level_sets_SP}
W^\E((\vartheta,r)) = \{ (\tilde \vartheta,\tilde r) \in \mathcal{R}\,:\, \bar \phi((\tilde \vartheta,\tilde r)) = \bar \phi((\vartheta,r))  \}.
\ee
In more detail, we try to find the function $\bar \phi$ via an expected version of equations~\eqref{eq:invariance2} and~\eqref{dephasedefRDS2}. 
We fix $(\vartheta_0, r_0) \in \mathcal R$ and require that the function $\bar \phi$ satisfies along solutions $(\vartheta(t), r(t))$ of the SDE~\eqref{eq:polargeneral} the equality  (cf.~equation~\eqref{eq:invariance_det} in the deterministic case)
\begin{equation} \label{dephasedef}
\mathbb{E} \left[ \rmd\, \bar \phi(\vartheta(t),r(t)) | (\vartheta(0),r(0)) = (\vartheta_0, r_0)\right] = 1 \, \rmd t\,.
\end{equation} 
By this, we can show the following result:
\begin{prop}
There is   $\bar \phi((\vartheta,r)): \mathcal{R} \to \mathbb{R}$ and a period $\bar T > 0$ with
\begin{equation} \label{dephasedef2}
\mathbb{E} \left[  \bar \phi(\vartheta(t),r(t)) | (\vartheta(0),r(0)) = (\vartheta_0, r_0)\right] = \bar \phi(\vartheta(0), r(0)) + t \mod \bar T\,.
\end{equation} 
This $\bar T$ is the expected return time to the  isochron $W^\E((\vartheta_0,r_0))$ which is the level set of $\bar \phi(\vartheta_0, r_0)$.

In particular, the function $\bar \phi$ can be identified with the solution $\bar \Theta$ of equation~\eqref{eq:BKstat_cao}.
\end{prop}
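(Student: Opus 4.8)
The plan is to reduce the statement to the elliptic boundary value problem of Cao, Lindner and Thomas recalled in Section~\ref{sec:caoetal}, and then extract the probabilistic content by Dynkin's formula. First I would note that, for a $C^2$ function $\bar\phi$ on $\mathcal R$ and writing $\mathbb{E}_{(\vartheta_0,r_0)}$ for the expectation conditioned on $(\vartheta(0),r(0))=(\vartheta_0,r_0)$, It\^o's formula applied to the diffusion $(\vartheta(t),r(t))$ solving the It\^o form of~\eqref{eq:polargeneral} gives
\[
\mathbb{E}_{(\vartheta_0,r_0)}\!\left[\bar\phi(\vartheta(t),r(t))\right] - \bar\phi(\vartheta_0,r_0) = \mathbb{E}_{(\vartheta_0,r_0)}\!\left[\int_0^t \mathcal L\bar\phi(\vartheta(s),r(s))\,\rmd s\right].
\]
Hence the infinitesimal requirement~\eqref{dephasedef} is equivalent to the PDE $\mathcal L\bar\phi\equiv 1$ on the interior of $\mathcal R$, so it suffices to produce such a $\bar\phi$ together with the constant $\bar T$.

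Second, I would set $\bar\phi:=-T$, where $T$ solves the modified Andronov--Vitt--Pontryagin problem~\eqref{eq:Caoetal_PDE} (equivalently its weakened form~\eqref{eq:milder_condition}); under the standing ellipticity, stationary-density and positive-flux assumptions~\eqref{eq:probflux}, such a $T$ exists and is unique up to an additive constant by~\cite[Theorem~3.1]{caolindnerthomas19}, with $\bar T$ the mean period~\eqref{eq:meantime}. Then $\mathcal L\bar\phi=1$, and on the unwrapped domain $\Omega_{\textnormal{ext}}$ the jump condition reads $\bar\phi(\vartheta+2\pi,r)-\bar\phi(\vartheta,r)=\bar T$, so $\bar\phi$ descends to a well-defined map $\mathcal R\to\mathbb R\bmod\bar T$, mirroring the deterministic isochron map $\xi$ valued in $\mathbb R\bmod\tau_\gamma$ of Theorem~\ref{thm:isochonsstableman_det}. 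Elliptic regularity upgrades $\bar\phi$ to $C^2$ (as smooth as the coefficients allow), which retroactively legitimizes the It\^o computation; Dynkin's formula on $\Omega_{\textnormal{ext}}$ then yields $\mathbb{E}_{(\vartheta_0,r_0)}[\bar\phi(\vartheta(t),r(t))]=\bar\phi(\vartheta_0,r_0)+t$, and projecting modulo $\bar T$ gives~\eqref{dephasedef2}.

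Third, for the return-time statement I would take $\tau$ to be the first passage time of the \emph{unwrapped} phase process to the once-shifted copy of the level set $W^\E((\vartheta_0,r_0))$ — this is the precise meaning of ``return'' here, which sidesteps the ill-posedness flagged in the text — and apply Dynkin's formula to $\bar\phi$ with the stopping time $\tau$. Since $\bar\phi$ increases by exactly $\bar T$ between the section and its once-shifted copy, this gives $\mathbb{E}_{(\vartheta_0,r_0)}[\tau]=\mathbb{E}_{(\vartheta_0,r_0)}[\bar\phi(\vartheta(\tau),r(\tau))]-\bar\phi(\vartheta_0,r_0)=\bar T$, independently of the starting point on the level set. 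The identification with $\bar\Theta$ is then immediate: $\bar\Theta:=(2\pi/\bar T)\,\bar\phi$ satisfies $\mathcal L\bar\Theta=2\pi/\bar T$, which is exactly~\eqref{eq:BKstat_cao}, so $\bar\phi$ and $\bar\Theta$ coincide up to the multiplicative normalisation converting ``time'' into ``phase'' units and an additive constant, and their level sets agree with~\eqref{level_sets_SP}.

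The main obstacle I expect is not the formal manipulations but the careful bookkeeping of the unwrapping and boundary structure: one has to work on $\Omega_{\textnormal{ext}}=\mathbb R\times[R_1,R_2]$ to make both ``$\bmod\bar T$'' and ``return to the section'' meaningful, check that Dynkin's formula is valid up to the a.s.\ finite, integrable hitting time $\tau$ — in the full-space case $R_2=\infty$ this needs the tightness and integrability supplied by the stationary density $\rho$ and the dissipativity underlying Theorem~\ref{thm:SMT} — and confirm that the jump constant in~\eqref{eq:Caoetal_PDE} is consistently the mean period~\eqref{eq:meantime}, which is precisely what is imported from~\cite[Theorem~3.1]{caolindnerthomas19}. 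A secondary subtlety is logical order: since~\eqref{dephasedef} already presupposes a $C^2$ function, one must first \emph{define} $\bar\phi$ from the Cao--Lindner--Thomas solution, establish its regularity, and only then verify~\eqref{dephasedef} and~\eqref{dephasedef2} a posteriori.
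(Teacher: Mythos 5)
Your proof is correct and rests on the same core identification as the paper's: the infinitesimal condition~\eqref{dephasedef} is equivalent to the elliptic equation $\mathcal L\bar\phi=1$ on the unwrapped domain with jump-periodic boundary condition, and a solution is furnished (up to sign and the normalisation $\bar\Theta=\tfrac{2\pi}{\bar T}\bar\phi$) by the Cao--Lindner--Thomas solution $T$ of~\eqref{eq:Caoetal_PDE}. Where you genuinely improve on the paper's argument is in two places. First, you reverse the logical order: rather than manipulating~\eqref{dephasedef} as if $\bar\phi$ were already given and then remarking post hoc that~\eqref{eq:BKstat} ``has a solution,'' you \emph{define} $\bar\phi:=-T$ from the existence and uniqueness result~\cite[Theorem~3.1]{caolindnerthomas19}, invoke elliptic regularity for smoothness, and only then verify~\eqref{dephasedef} and~\eqref{dephasedef2} by Dynkin's formula; this removes the circularity of presupposing a solution. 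Second, you actually \emph{prove} the sentence ``$\bar T$ is the expected return time to $W^\E((\vartheta_0,r_0))$'' by applying Dynkin's formula at the hitting time $\tau$ of the once-shifted copy of the section in $\Omega_{\textnormal{ext}}$ and using the jump condition to evaluate $\bar\phi$ there, giving $\mathbb{E}_{(\vartheta_0,r_0)}[\tau]=\bar T$ uniformly over the level set; the paper's proof derives~\eqref{eq:BKstat} and then merely asserts that the two approaches coincide, so this step is a real addition. You are also right to flag that applying Dynkin's formula up to $\tau$ requires a.s.\ finiteness and integrability of $\tau$ (supplied by the stationary density, or by working on a compact annulus with reflecting boundaries) — the paper does not address this either, so noting it is a net gain rather than a gap in your argument.
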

\begin{proof}
Using the chain rule of Stratonovich calculus and inserting~\eqref{eq:polargeneral}, Equation~\eqref{dephasedef} can be rewritten as
\begin{align*}
1 \, \rmd t &= \mathbb{E} \left[ \frac{\rmd}{\rmd \vartheta}\, \bar \phi(\vartheta(t),r(t)) \, \rmd \vartheta + \frac{\rmd}{\rmd r}\, \bar \phi(\vartheta(t),r(t)) \, \rmd r \bigg| (\vartheta(0),r(0)) = (\vartheta_0, r_0)\right] \\
 &= \mathbb{E} \bigg[ \frac{\rmd}{\rmd \vartheta}\, \bar \phi(\vartheta(t),r(t)) \left(f_1(\vartheta(t), r(t)) \, \rmd t + \sigma_1 g_1(\vartheta(t), r(t)) \circ \, \rmd W_t^1 \right) \\
&+ \frac{\rmd}{\rmd r}\, \bar \phi(\vartheta(t),r(t)) \left(f_2(\vartheta(t), r(t)) \, \rmd t 
 + \sigma_2 g_2(\vartheta(t), r(t)) \circ \, \rmd W_t^2 \right) \bigg| (\vartheta(0),r(0)) = (\vartheta_0, r_0)\bigg]  \,,
\end{align*}
where the boundary condition in angular direction is
\be \label{eq:periodicbound} 
\bar \phi (2 \pi, r) =\bar \phi (0, r) \mod \bar T, 
\ee
for all $R_1 \leq r \leq R_2$, fixing
$$\bar \phi (0, r^*) = 0\,,$$
and
$$ \bar T = \bar \phi(2 \pi, r^*)\,.$$
In radial direction, if $0 < R_1 < R_2 < \infty$, one can choose reflecting boundary conditions as in Section~\ref{sec:caoetal}.

Writing time $t$ as an index, transforming the Stratonovich noise terms into It\^o noise terms and using the fact that the It\^o noise terms have zero expectation, leads to the equation
\begin{align} \label{eq:BKtime}
1 &= \mathbb{E} \bigg[ \left(f_1(\vartheta_t, r_t) + \frac{1}{2}g_1(\vartheta_t, r_t) \frac{\partial}{\partial \vartheta}g_1(\vartheta_t, r_t),  f_2(\vartheta_t, r_t) + \frac{1}{2}g_2(\vartheta_t, r_t) \frac{\partial}{\partial r}g_2(\vartheta_t, r_t) \right) \cdot \nabla \bar \phi (\vartheta_t, r_t) \nonumber\\ 
&+ \frac{1}{2} \sigma_1^2 g_1^2(\vartheta_t,r_t) \frac{\partial^2}{\partial \vartheta^2} \phi(\vartheta_t,r_t)  + \frac{1}{2} \sigma_2^2 g_2^2(\vartheta_t,r_t) \frac{\partial^2}{\partial r^2} \bar \phi(\vartheta_t,r_t) \bigg|  (\vartheta(0),r(0)) = (\vartheta_0, r_0))\bigg] \nonumber \\
&= \mathbb{E} \bigg[ \mathcal L \bar \phi(\vartheta_t,r_t)\bigg|  (\vartheta(0),r(0)) = (\vartheta_0, r_0)\bigg]\,,
\end{align}
where $\mathcal L$ denotes the backward Kolmogorov operator associated with the SDE~\eqref{eq:polargeneral}. In particular, a solution is given by the stationary version
\begin{equation} \label{eq:BKstat}
\mathcal L \bar \phi(\vartheta,r) = 1,
\end{equation}
with boundary condition~\eqref{eq:periodicbound}.
 Note that, up to the change of sign $\phi \to - \phi$, Equation~\eqref{eq:BKstat} is  Dynkin's equation and that Equation~\eqref{eq:BKstat_cao} is equivalent to equation~\eqref{eq:BKstat} with boundary condition~\eqref{eq:periodicbound} such that $\bar \phi$ is taken as a function from the domain $\Omega$ to $\mathbb{R} \mod \bar T$.
Hence, the two approaches, one starting with~\eqref{dephasedef} and the other, considering the MFPT, lead to the same outcome regarding the stochastic isochrons $W^\E((\vartheta,r))$.
\end{proof}


We exmplify this derivation of an isochron function $\bar \phi$ 
by reference to the fundamental  Example~\ref{ex:nonfixedphase}:
\begin{ex}
Recall Example~\ref{ex:nonfixedphase} with equation~\eqref{eq:exzz2_noise}, i.e.~in its most general form,
\begin{equation*}
\begin{array}{lcl}
\rmd \vartheta &=& h(r) \, \rmd t + \tilde h(r) \circ \, \rmd W_t^2,\\
\rmd r &=&  (r - r^3) \, \rmd t + \sigma r \circ \, \rmd W_t^1\,,
\end{array}
\end{equation*}
choosing $h(r) = \kappa + (r^2 -1) $, $\kappa \geq 1$, similarly to \cite[Example (1)]{SchwabedalPikovsky}, and $\tilde h$ some arbitrary smooth and bounded function. Note that $r^*=1$ for this case and that there is a stationary density $p$ for the radial process which has the form
$$ p(r) = \frac{1}{Z} r^{\frac{2}{\sigma^2} -1} \txte^{-\frac{r^2}{\sigma^2}} ,$$
where $Z > 0$ is a normalization constant. One can then additionally observe that $\mathbb{E}_{p} [r^2] = 1$ for all $\sigma \geq 0$, and, hence, $\mathbb{E}_{p} [h(r)] = \kappa.$

It is easy to see that 
$$ \hat \phi (\vartheta, r) = \frac{1}{\kappa}(\vartheta + \ln r)$$ 
solves~\eqref{eq:BKstat} such that \eqref{dephasedef2} is actually satisfied with $\bar T = \frac{2\pi}{\kappa}$. 
In fact, we have (up to some constant $\bar \phi_0$)
$$ \bar \phi (\vartheta, r) = \frac{1}{\kappa}(\vartheta + \ln r) \mod \bar T,$$ 
which, in this case, is also the deterministic isochron.
\end{ex}

Similarly to $T_{\textnormal{RDS}} := \mathbb{E} [T(\cdot)]$,
we can introduce for the associated random isochron map $\tilde \phi$  the expected quantity
\begin{equation} \label{eq:expected_isochronmap}
\bar \phi_{\textnormal{RDS}}(x) = \mathbb{E} [ \tilde \phi (x, \cdot, 0 )], 
\end{equation}
for fixed $x \in \mathbb{R}^m$, where $\tilde \phi$ is the random isochron map from Section~\ref{sec:randomisomap}.
It remains to clarify how the isochron function $\bar \phi$, or equivalently $\bar \Theta$ \eqref{eq:BKstat_cao},  may be related to $\bar \phi_{\textnormal{RDS}}$, assuming the existence of a CRPS $(\psi, T)$ as for Example~\ref{ex:nonfixedphase} (see Proposition~\ref{prop:crps_ex}). We give a brief discussion of a possible approach to this question in Appendix~\ref{appendix:rel_rim}, leaving a more thourough investigation as future work.

\section{Conclusion} \label{sec:concl}

We have introduced a new perspective on the problem of stochastic isochronicity, by considering random isochrons as random stable manifolds anchored at attracting random cycles with random periodic solutions. 
We have further characterized these random isochrons as level sets of a time-dependent random isochron map. 
Precisely this time-dependence of the random dynamical system, i.e.,~its non-autonomous nature, makes it difficult to specify the concrete relation to the definitions of stochastic isochrons given by fixed expected mean return times for whom we have given an alternative derivation of the isochron function $\bar \phi$ with return time $\bar T$.
We suggest an extended investigation of their relationship to the expected quantity $\bar \phi_{\textnormal{RDS}}$ as an intriguing problem for future work.
Additionally, it would be interesting to study the relation between stochastic isochronicity via eigenfunctions of the backward Kolmogorov operator  \cite{ThomasLindner} and \emph{random Koopman operators} (see \cite{Crnjaric-Zic2019}), extending the eigenfunction approach from the deterministic setting to the random dynamical systems case. 
\medskip

\textbf{Acknowledgments:} The authors gratefully acknowledge support by the DFG via the SFB/TR109 Discretization in Geometry and Dynamics. ME has also been supported by Germany's Excellence Strategy -- The Berlin Mathematics Research Center MATH+ (EXC-2046/1, project ID: 390685689). CK acknowledges support by a Lichtenberg Professorship of the VolkswagenFoundation.

\appendix 
\numberwithin{equation}{section}
\makeatletter 
\newcommand{\section@cntformat}{Appendix A}
\makeatother

\section{Random dynamical systems} \label{sec:app}

In this appendix we have collected several constructions for reference from the theory of random dynamical systems, which we have used throughout the main part of this work.

\subsection{Random dynamical systems induced by stochastic differential equations}
\label{sec:RDSSDE}

Following \cite{fgs16}, we make the following definition:
\begin{defn}[White noise RDS] \label{whitenoiseRDS}
Let $(\theta, \varphi)$ be a random dynamical system over a probability space $(\Omega,\mathcal F,\mathbb P)$ on a topological space $\cX$ where $\varphi$ is defined in forward time. 
Let $(\mathcal{F}_s^t)_{-\infty \leq s \leq t \leq \infty} $ be a family of sub-$\sigma$-algebras of $\mathcal F$ such that 
\begin{enumerate}[(i)]
\item $\mathcal F_t^u \subset \mathcal F_s^v$ for all $ s \leq t \leq u \leq v$,
\item $\mathcal F_s^t$ is independent from $\mathcal F_u^v$ for all $ s \leq t \leq u \leq v$,
\item $ \theta_r^{-1}(\mathcal F_s^t) = \mathcal{F}_{s+r}^{t+r}$ for all $ s \leq t$, $r \in \mathbb{R}$,
\item $\varphi(t, \cdot, x)$ is $\mathcal F_0^t$-measurable for all $t \geq 0$ and $x \in \cX$.
\end{enumerate}  
Furthermore we denote by $\mathcal{F}_{-\infty}^t$ the smallest $sigma$-algebra containing all $\mathcal F_s^t$, $s \leq t$, and by $\mathcal{F}_{t}^{\infty}$ the smallest $sigma$-algebra containing all $\mathcal F_t^u$, $t \leq u$. Then $(\theta, \varphi)$ is called a \emph{white noise (filtered) random dynamical system}.
\end{defn}
Consider a stochastic differential equation (SDE)
\begin{equation} \label{SDE_RDS}
\rmd X_t = f(X_t) \rmd t + g(X_t) d W_t, \ X_0 \in \mathbb{R}^d\,,
\end{equation} 
where $(W_t)$ denotes some r-dimensional standard Brownian motion, the drift $f: \mathbb{R}^d \to \mathbb{R}^d$ is a locally Lipschitz continuous vector field and the diffusion coefficient $g: \mathbb{R}^d \to \mathbb{R}^{d \times r}$ a Lipschitz continuous matrix-valued map.  If in addition $f$ satisfies a bounded growth condition, as for example a one-sided Lipschitz condition, then by \cite{ds11} there is a white noise random dynamical system $(\theta, \varphi)$ associated to the diffusion process solving~\eqref{SDE_RDS}. The probabilistic setting is as follows: We set $\Omega=C_0(\mathbb R,\mathbb R^r)$, i.e.~the space of all continuous functions $\omega:\mathbb R\rightarrow \mathbb R^r$ satisfying that $\omega(0)=0\in \mathbb R^r$. If we endow $\Omega$ with the compact open topology given by the complete metric
\[
\kappa(\omega,\widehat{\omega})
:=
\sum_{n=1}^{\infty}
\frac{1}{2^n}
\frac{\|\omega-\widehat{\omega}\|_n}{1+\|\omega-\widehat{\omega}\|_n},\quad
\|\omega-\widehat \omega\|_n:=\sup_{|t|\leq n } \|\omega(t)-\widehat{\omega}(t)\|\,,
\]
we can set $\mathcal F = \mathcal{B} (\Omega)$, the Borel-sigma algebra on $(\Omega,\kappa)$. There exists a probability measure $\mathbb P$ on $(\Omega,\mathcal F)$ called \emph{Wiener measure} such that the $r$ processes $(W_t^1), \dots, (W_t^r)$ defined by $(W_t^1(\omega), \dots, W_t^r(\omega))^{\mathrm T}:=\omega(t)$ for $\omega\in\Omega$ are independent one-dimensional Brownian motions. Furthermore, we define the sub-$\sigma$-algebra $\mathcal{F}_s^t$ as the $\sigma$-algebra generated by $\omega(u)- \omega(v)$ for $s \leq v \leq u \leq t$. The ergodic metric dynamical system $(\theta_t)_{t\in\mathbb R}$  on $(\Omega,\mathcal F,\mathbb P)$  is given by the shift maps
\begin{equation*}
\theta_t:\Omega\rightarrow \Omega, \quad (\theta_t \omega)(s) = \omega(s+t) - \omega(t)\,.
\end{equation*}
Indeed, these maps form an ergodic flow preserving the probability $\mathbb P$, see e.g. \cite{a98}.

Note that, by the It\^o-Stratonovich conversion formula, euqation~\eqref{SDE_RDS} with Stratonovich noise instead of It\^o noise also induces a random dynamical system under analogous assumptions.

\subsection{Invariant measures} \label{sec:invariant_measures}
Let $(\theta, \varphi)$ be a random dynamical system with the cocycle $\varphi$ being defined on one-or two-sided time $\mathbb T \in \{ \mathbb{R}_0^+, \mathbb{R} \}$. Then the system generates a skew product flow, i.e.~a family of maps $(\Theta_t)_{t \in \mathbb{T}}$ from $\Omega \times \cX $ to itself such that for all $t \in \mathbb T$ and $\omega \in \Omega, x \in \cX$
\begin{equation*}
\Theta_t(\omega, x) = (\theta_t \omega, \varphi(t, \omega,x))\,.
\end{equation*}
The notion of an invariant measure for the random dynamical system is given via the invariance with respect to the skew product flow, see e.g. \cite[Definition 1.4.1]{a98}. We denote by $ T\mu$ the push forward of a measure $\mu$ by a map $T$, i.e. $T \mu(\cdot) = \mu (T^{-1}(\cdot))$.
\begin{defn}[Invariant measure] A probability measure $\mu$ on $\Omega \times \cX$ is invariant for the random dynamical system $(\theta, \varphi)$ if
\begin{enumerate}
\item[(i)] $\Theta_t \mu = \mu$ for all $ t \in \mathbb T$\,,
\item[(ii)] the marginal of $\mu$ on $\Omega$ is $\mathbb{P}$, i.e.~$\mu$ can be factorised uniquely into $\mu(\rmd \omega, \rmd x) = \mu_{\omega}(\rmd x) \mathbb{P}(\rmd \omega)$ where $\omega \mapsto \mu_{\omega}$ is a \textit{random measure} (or \textit{disintegration} or \textit{sample measure}) on $\cX$, i.e.~$\mu_{\omega}$ is a probability measure on $\cX$ for $\mathbb{P}$-a.a. $\omega \in \Omega$ and $\omega \mapsto \mu_{\omega}(B)$ is measurable for all $B \in \mathcal{B}(\cX)$.
\end{enumerate}
\end{defn}
The marginal of $\mu$ on the probability space is demanded to be $\mathbb{P}$ since we assume the model of the noise to be fixed.
Note that the invariance of $\mu$ is equivalent to the invariance of the random measure $\omega \mapsto \mu_{\omega}$ on the state space $\cX$ in the sense that
\begin{equation} \label{Markovmeasure}
\varphi(t,\omega, \cdot) \mu_{\omega} = \mu_{\theta_t \omega} \quad \mathbb{P}\text{-a.s. for all} \ t \in \mathbb T\,.
\end{equation}
For white noise random dynamical systems $(\theta, \varphi)$, in particular random dynamical systems induced by a stochastic differential equation, there is a one-to-one correspondence between certain invariant random measures and stationary measures of the associated stochastic process, first observed in \cite{c91}. In more detail, we can define a Markov semigroup $(P_t)_{t \geq 0}$ by setting
$$ P_t f(x) = \mathbb{E}(f(\varphi(t, \cdot,x))$$
for all measurable and bounded functions $f: \cX \to \mathbb{R}$. If $\omega \mapsto \mu_{\omega}$ is a $\mathcal{F}_{-\infty}^{0}$-measurable invariant random measure in the sense of \eqref{Markovmeasure}, also called \textit{Markov measure}, then
\begin{equation*}
\rho (\cdot) = \mathbb{E} [\mu_{\omega}(\cdot)] = \int_{\Omega} \mu_{\omega} (\cdot) \mathbb{P}(d \omega)
\end{equation*}
turns out to be an invariant measure for the Markov semigroup $(P_t)_{t \geq 0}$, often also called stationary measure for the associated process. If $\rho$ is an invariant measure for the Markov semigroup, then
\begin{equation*}
\mu_{\omega} = \lim_{t \to \infty} \varphi(t, \theta_{-t}\omega, \cdot)\rho
\end{equation*}
exists $\mathbb{P}$-a.s.~and is an $\mathcal{F}_{-\infty}^{0}$-measurable invariant random measure.

We observe similarly to \cite{b91} that, in the situation of $\mu$ and $\rho$ corresponding in the way described above,
$$ \mathbb E [ \mu_{\omega}(\cdot) | \mathcal{F}_{0}^{\infty}] = \mathbb{E} [ \mu_{\omega} (\cdot) ] = \rho(\cdot)\,,$$
and, hence,
$$ \mathbb E [ \mu(\cdot) | \mathcal{F}_{0}^{\infty}] = (\mathbb P \times \rho)(\cdot)\,.$$
Therefore the probability measure $\mathbb{P} \times \rho$ is invariant for $(\Theta_t)_{t \geq 0}$ on $(\Omega \times \cX, \mathcal{F}_0^\infty \times \mathcal{B}(\cX))$. In words, the product measure with marginals $\mathbb{P}$ and $\rho$ is invariant for the random dynamical system restricted to one-sided path space. 
\subsection{Lyapunov spectrum}
\label{LyapSpec}

Consider a $C^k$ random dynamical system $(\theta, \varphi)$, i.e.~$\varphi(t, \omega, \cdot) \in C^k$ for all $t\in\mathbb T$ and $\omega\in\Omega$, where again $ \mathbb T \in \{ \R, \R_0^+\}$. Let's assume that $\cX$ is a smooth $m$-dimensional manifold and that $(\theta, \varphi)$ is  $C^1$. Recall that the \textit{linearization} or \textit{derivative} $\rmD \varphi(t,\omega,x)$ of $\varphi(t,\omega,\cdot)$ at $x \in \cX$ is a linear map from the tangent space $T_x$ to the tangent space $T_{\varphi(t,\omega,x)}$. If $\cX = \mathbb{R}^m$, the linearization is simply the Jacobian $m\times m$ matrix
$$ \rmD \varphi(t,\omega,x) = \frac{\partial \varphi(t, \omega,x)}{\partial x}\,.$$
Further assume that the random dynamical system possesses an invariant measure $\mu$. In case $\cX= \mathbb{R}^m$, this implies that $(\Theta,\rmD \varphi)$ is a random dynamical system with linear cocycle $\rmD \varphi$ over the metric dynamical system $(\Omega \times \cX, \mathcal F \times \mathcal{B}(\cX), (\Theta_t)_{t \in \mathbb T})$, see e.g. \cite[Proposition 4.2.1]{a98}. Generally, we have that $\rmD \varphi$ is a linear bundle random dynamical system on the tangent bundle $T\cX$ (see \cite[Definition 1.9.3, Proposition 4.25]{a98}).

In case the derivative can be written as a matrix, as for example for $\cX= \mathbb{R}^m$, the Jacobian $\rmD \varphi(t,\omega,x)$ satisfies \emph{Liouville's equation}
\begin{align} \label{Liouville}
\det \rmD \varphi(t, \omega,x) &= \exp \bigg( \int_{0}^{t} \trace \rmD f_0(\varphi(s,\omega) x) \rmd s \nonumber \\
&+ \sum_{j=1}^m \int_{0}^{t} \trace \rmD f_j(\varphi(s,\omega) x) \circ \rmd W_s^j \bigg).
\end{align}
We summarise the different versions of the \emph{Multiplicative Ergodic Theorem} for differentiable random dynamical systems in one-sided and two-sided time in the following theorem \cite[Theorem 3.4.1, Theorem 3.4.11, Theorem 4.2.6]{a98}, establishing a \emph{Lyapunov spectrum} with an associated filtration of random sets and, in two-sided time, with a splitting into invariant random subspaces.
\begin{thm} \label{MET}
\begin{enumerate} [a)]
\item Suppose the $C^1$-random dynamical system $(\theta,\varphi)$, where $\varphi$ is defined in forward time, has an ergodic invariant measure $\nu$ and satisfies the integrability condition
\begin{equation*}
\sup_{0 \leq t \leq 1} \ln^+ \| \rmD \varphi(t, \omega, x) \| \in L^1(\nu).
\end{equation*}
Then there exist a $\Theta$-invariant set $\Delta \subset \Omega \times \cX$ with $\nu (\Delta) = 1 $, a number $1 \leq p \leq m$ and real numbers $\lambda_1 > \dots > \lambda_p$, the \emph{Lyapunov exponents} with respect to $\nu$, such that for all $0 \neq v \in T_x \cX \cong \mathbb{R}^m$ and $(\omega,x) \in \Delta$
\begin{equation*}
\lambda(\omega, x, v) := \lim_{t \to \infty} \frac{1}{t} \ln \| \rmD \varphi(t, \omega, x)v \| \in \{\lambda_p, \dots, \lambda_1\}\,.
\end{equation*} .
Furthermore, the tangent space $T_x \cX \cong \mathbb{R}^m$ admits a filtration 
\begin{displaymath}
  \mathbb R^m = V_1(\omega, x) \supsetneq V_2(\omega,x)\supsetneq \dots \supsetneq V_p(\omega,x) \supsetneq V_{p+1}(\omega,x)= \{0\}\,,
\end{displaymath}
for all $(\omega,x) \in \Delta$ such that
\begin{equation*}
\lambda (\omega, x, v) = \lambda_{i}\quad \Longleftrightarrow \quad v \in V_i(\omega,x) \setminus V_{i+1}(\omega,x) \quad \text{ for all } i\in\{1, \dots, p\}\,.
\end{equation*}
In case the derivative can be written as a matrix, we have for all $(\omega,x) \in \Delta$
\begin{equation} \label{sumformula}
\lim_{t \to \infty} \frac{1}{t} \ln \det  \rmD \varphi(t, \omega, x) = \sum_{i=1}^p d_i \lambda_i\,,
\end{equation}
where $d_i$ is the multiplicity of the Lyapunov exponent $\lambda_i$ and $\sum_{i=1}^p d_i =m$.
\item If the cocycle $\varphi$ is defined in two-sided time and satisfies the above integrability condition also in backwards time, there exists the Oseledets splitting
$$ \mathbb{R}^m = E_1(\omega,x) \oplus \cdots \oplus E_p(\omega,x) $$
of the tangent space into random subspaces $E_i(\omega,x)$, the \emph{Oseledets spaces}, for all $(\omega,x) \in \Delta$. These have the following properties for all $(\omega,x) \in \Delta$:
\begin{enumerate} [(i)]
\item The Oseledets spaces are invariant under the derivative flow, i.e. for all $t \in \mathbb{R}$
$$ \rmD \varphi(t, \omega,x) E_i(\omega,x) = E_i( \Theta_t(\omega,x)) \,,$$
\item The Oseledets space $E_i$ corresponds with $\lambda_1$ in the sense that
\begin{equation*}
\lim_{t \to \infty} \frac{1}{t} \ln \| \rmD \varphi(t, \omega, x)v \|
= \lambda_{i}\quad \Longleftrightarrow \quad v \in E_i(\omega,x) \setminus \{0\} \quad \text{ for all } i\in\{1, \dots, p\}\,,
\end{equation*}
\item The dimension equals the multiplicity of the associated Lyapunov exponent, i.e.
$$ \dim E_i(\omega,x) = d_i\,.$$
\end{enumerate}
\end{enumerate}
\end{thm}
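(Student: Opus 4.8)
The statement is the Multiplicative Ergodic Theorem (Oseledets) for linear cocycles over a metric dynamical system, in the form collected by Arnold; rather than reproving it from scratch we would follow the classical argument of \cite[Chapters~3--4]{a98}, and the outline below mainly records where each hypothesis is used. The plan is to (1) pass to the discrete-time linear cocycle generated by the time-one linearization, (2) extract the exponents and their multiplicities from Kingman's subadditive ergodic theorem applied to all exterior powers, (3) build the filtration using the deterministic ``tempered matrix'' lemma of Oseledets, and (4) return to continuous time; the two-sided statement (b) is then obtained by combining (a) in forward and backward time. Concretely, set $A_n(\omega,x):=\rmD\varphi(n,\omega,x)=\rmD\varphi(1,\Theta_{n-1}(\omega,x))\cdots\rmD\varphi(1,\omega,x)$, a linear cocycle over the ergodic system $(\Omega\times\cX,\mathcal F\otimes\mathcal B(\cX),(\Theta_n)_{n\in\mathbb N},\nu)$.

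For each $1\le k\le m$ the map $(\omega,x)\mapsto\Lambda^k A_n(\omega,x)$ (the $k$-th exterior power) is again a cocycle, and $n\mapsto\log\|\Lambda^k A_n(\omega,x)\|$ is subadditive along $\Theta$; the integrability hypothesis $\sup_{0\le t\le1}\log^+\|\rmD\varphi(t,\cdot,\cdot)\|\in L^1(\nu)$ yields $\log^+\|\Lambda^k A_1\|\in L^1(\nu)$. Kingman's subadditive ergodic theorem then gives, $\nu$-a.s.\ and in $L^1$, a limit $\Lambda_k:=\lim_n\frac1n\log\|\Lambda^k A_n(\omega,x)\|$, which is $\Theta$-invariant and hence $\nu$-a.s.\ constant by ergodicity. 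Differencing, $\mu_j:=\Lambda_j-\Lambda_{j-1}$ (with $\Lambda_0:=0$) is non-increasing in $j$; collapsing repeated values produces the distinct numbers $\lambda_1>\cdots>\lambda_p$ together with multiplicities $d_i=\#\{j:\mu_j=\lambda_i\}$, $\sum_i d_i=m$. Choosing $k=m$ gives $\frac1n\log|\det A_n|\to\sum_i d_i\lambda_i$, which is the sum formula \eqref{sumformula}; this also follows directly from Liouville's equation \eqref{Liouville} and Birkhoff's ergodic theorem applied to the trace terms.

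The heart of the proof is the filtration $\mathbb R^m=V_1(\omega,x)\supsetneq\cdots\supsetneq V_{p+1}(\omega,x)=\{0\}$. First one shows, via the Borel--Cantelli lemma applied to the sets $\{\log^+\|\rmD\varphi(1,\Theta_n(\omega,x))\|>\varepsilon n\}$ together with the $L^1$-bound, that $n\mapsto\log^+\|\rmD\varphi(1,\Theta_n(\omega,x))\|$ is \emph{tempered}, i.e.\ grows subexponentially, for $\nu$-a.a.\ $(\omega,x)$ (and, under the hypothesis of (b), also as $n\to-\infty$). One then invokes the deterministic Oseledets lemma: if $(A_n)$ is a sequence of invertible $m\times m$ matrices such that $\log^+\|A_{n+1}A_n^{-1}\|$ is tempered and $\lim_n\frac1n\log\|\Lambda^k A_n\|$ exists for every $k$, then the symmetric positive-definite matrices $(A_n^\top A_n)^{1/2n}$ converge to a limit $\Psi(\omega,x)$ whose distinct eigenvalues are exactly $e^{\lambda_1}>\cdots>e^{\lambda_p}$ with multiplicities $d_i$. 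Summing the eigenspaces of $\Psi$ in order of decreasing eigenvalue defines the $V_i$, and one checks: $\lim_n\frac1n\log\|A_n v\|=\lambda_i$ exactly for $v\in V_i\setminus V_{i+1}$ (which also upgrades the $\limsup$ from Kingman to a genuine limit and yields the dichotomy $\lambda(\omega,x,v)\in\{\lambda_p,\dots,\lambda_1\}$); measurable dependence of $V_i(\omega,x)$ on $(\omega,x)$, since it is built from limits of measurable matrix-valued functions; and invariance $\rmD\varphi(n,\omega,x)V_i(\omega,x)=V_i(\Theta_n(\omega,x))$ from the cocycle identity. I expect this to be the main obstacle: proving the \emph{existence} of $\lim_n(A_n^\top A_n)^{1/2n}$ (not merely of $\limsup$) and controlling its eigenspaces is the technical core of the theorem and is precisely where the temperedness estimates are used in full force.

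Finally, passing from $A_n$ back to $\rmD\varphi(t,\omega,x)$ for arbitrary $t$ is routine: for $t\in[n,n+1]$ the correction factors relating $\rmD\varphi(t,\omega,x)$ to $\rmD\varphi(n,\omega,x)$, and their inverses, are bounded in norm by $\exp$ of a quantity of the form $\sup_{0\le s\le1}\log^+\|\rmD\varphi(s,\Theta_n(\omega,x))^{\pm1}\|$, which is tempered by the same Borel--Cantelli argument, so the exponential rates and the subspaces are unchanged; this establishes (a), the manifold case being handled by working in local trivializations of the tangent bundle $T\cX$, where the intrinsic operator norm makes the integrability condition chart-independent. For (b), I would apply (a) to $(\theta,\varphi)$ and to the inverse cocycle, producing a decreasing forward filtration $V_i^+$ and an increasing backward filtration $V_i^-$, and set $E_i(\omega,x):=V_i^+(\omega,x)\cap V_i^-(\omega,x)$. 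Using that the two filtrations are in general position and that on $E_i$ the cocycle has exact two-sided growth rate $\lambda_i$, one verifies $\dim E_i=d_i$, $\mathbb R^m=\bigoplus_i E_i(\omega,x)$, $\rmD\varphi(t,\omega,x)E_i(\omega,x)=E_i(\Theta_t(\omega,x))$, and the exact-growth characterization of each $E_i$, which together are the Oseledets splitting. All objects are defined on a $\Theta$-invariant set $\Delta$ of full $\nu$-measure, as claimed.
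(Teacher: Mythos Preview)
Your sketch is a faithful outline of the classical Oseledets argument as presented in Arnold's monograph, and there is no mathematical gap in the strategy you describe. However, you should be aware that the paper does not actually prove this theorem: it is stated in the appendix purely for reference, with the proof delegated entirely to \cite[Theorem~3.4.1, Theorem~3.4.11, Theorem~4.2.6]{a98}. So the ``paper's own proof'' consists of a citation, and your proposal goes well beyond what the paper itself supplies; in effect you are reconstructing the proof from the cited source rather than matching anything the authors wrote.
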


\subsection{Existence of random attractors} \label{appendix:randomattractor}
The existence of random attractors is proved via so-called absorbing sets. A set $B\in\mathcal D$ is called an \emph{absorbing set} if for almost all $\omega\in\Omega$ and any $D \in \mathcal D$, there exists a $T>0$ such that
\[
\varphi(t,\theta_{-t}\omega)D(\theta_{-t}\omega)\subset B(\omega)\fa t\geq T\,.
\]
A proof of the following theorem can be found in \cite[Theorem~3.5]{flandolischmalfuss96}.
\begin{thm}[Existence of random attractors]\label{ExistencePullback}
  Suppose that $(\theta,\varphi)$ is a continuous random dynamical system with an absorbing set $B$. Then there exists a unique random attractor $A$, given by
	\[
	A(\omega)
	:=
	\bigcap_{\tau\geq 0}\overline{\bigcup_{t\geq \tau} \varphi(t,\theta_{-t}\omega)B(\theta_{-t}\omega)}
	\faa \omega\in\Omega.
	\]
	Furthermore, $ \omega \mapsto A(\omega)$ is measurable with respect to $\mathcal{F}_{-\infty}^{0}$, i.e.~the past of the system.
\end{thm}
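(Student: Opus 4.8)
The plan is to realise the attractor $A$ as the random $\omega$-limit set of the absorbing set $B$, i.e.\ exactly the set displayed in the statement, and then to verify in turn its defining properties following the Crauel--Flandoli--Schmalfuss scheme, adapted to the present notion of attractor with respect to the family $\cD$ of compact tempered sets. First I would record that for fixed $\omega$ the sets $K_\tau(\omega):=\overline{\bigcup_{t\geq\tau}\varphi(t,\theta_{-t}\omega)B(\theta_{-t}\omega)}$ decrease in $\tau$, so $A(\omega)=\bigcap_{\tau\geq 0}K_\tau(\omega)$. Applying the absorbing property to $D=B$ yields some $T(\omega)>0$ with $\varphi(t,\theta_{-t}\omega)B(\theta_{-t}\omega)\subset B(\omega)$ for all $t\geq T(\omega)$, so $K_{T(\omega)}(\omega)\subset B(\omega)$, which is compact because $B\in\cD$. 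Hence $A(\omega)$ is a nested intersection of nonempty compact sets and is itself nonempty and compact for almost every $\omega$.

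Next I would prove pullback attraction of an arbitrary $D\in\cD$. Arguing by contradiction, if the convergence failed for $\omega$ in a set of positive measure there would be $t_n\to\infty$ and $x_n\in D(\theta_{-t_n}\omega)$ with $\operatorname{dist}(\varphi(t_n,\theta_{-t_n}\omega)x_n,A(\omega))\geq\delta>0$. Using the cocycle property I would write $\varphi(t_n,\theta_{-t_n}\omega)x_n=\varphi(s,\theta_{-s}\omega)\big(\varphi(t_n-s,\theta_{-t_n}\omega)x_n\big)$; by the absorbing property applied to $D$ the inner point lies in $B(\theta_{-s}\omega)$ once $n$ is large, so the sequence $y_n:=\varphi(t_n,\theta_{-t_n}\omega)x_n$ eventually belongs to $K_s(\omega)$ for every fixed $s$, and also to the compact set $B(\omega)$; a convergent subsequence then has a limit $y$ lying in every $K_s(\omega)$, hence in $A(\omega)$, contradicting $\operatorname{dist}(y_n,A(\omega))\geq\delta$. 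Strict invariance $\varphi(t,\omega)A(\omega)=A(\theta_t\omega)$ would then follow from continuity of $\varphi(t,\omega,\cdot)$: since $B(\omega)$ has compact closure one has $\varphi(t,\omega)\overline{S}=\overline{\varphi(t,\omega)S}$ for bounded $S$, which gives $\varphi(t,\omega)K_\tau(\omega)=K_{\tau+t}(\theta_t\omega)$ after substituting $\theta_{-s}\omega=\theta_{-(t+s)}(\theta_t\omega)$; intersecting over $\tau$ and using monotonicity in $\tau$ gives $\varphi(t,\omega)A(\omega)\subseteq A(\theta_t\omega)$, and the reverse inclusion is obtained by taking $y\in A(\theta_t\omega)$, writing its approximants as $\varphi(t,\omega)\varphi(r_n,\theta_{-r_n}\omega)b_n$ with $r_n\to\infty$, extracting a limit $z\in A(\omega)$ by compactness of $B(\omega)$, and using continuity to conclude $y=\varphi(t,\omega)z$.

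Measurability of $\omega\mapsto A(\omega)$ I would reduce to measurability of $\omega\mapsto\operatorname{dist}(x,A(\omega))$ for each $x\in\R^m$, which follows from writing $A$ as a countable intersection (over rational $\tau$) of $\omega$-limit sets of the jointly measurable cocycle $\varphi$ together with a standard measurable-selection argument; in the white-noise setting $\varphi(t,\theta_{-t}\omega,\cdot)$ is $\mathcal F_{-t}^0$-measurable, which upgrades this to $\mathcal F_{-\infty}^0$-measurability. For uniqueness, if $A'$ is another random attractor with respect to $\cD$, then since both $A$ and $A'$ are themselves elements of $\cD$, pullback attraction of $A$ applied to the set $A'$ together with strict invariance of $A'$ gives $\operatorname{dist}(A'(\omega),A(\omega))=\lim_{t\to\infty}\operatorname{dist}(\varphi(t,\theta_{-t}\omega)A'(\theta_{-t}\omega),A(\omega))=0$, and symmetrically, so $A'(\omega)=A(\omega)$ for almost every $\omega$.

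The genuinely delicate points, and the place I would expect most work, are the measurability of the set-valued map $\omega\mapsto A(\omega)$ and in particular its $\mathcal F_{-\infty}^0$-measurability, which requires the joint measurability of $\varphi$ together with selection/Castaing-type results, and the careful bookkeeping of the $\theta$-shifts in the invariance step; by contrast the compactness and attraction arguments are routine once temperedness of $B$ is used to guarantee that $K_\tau(\omega)$ is compact for $\tau$ large. This is essentially the content of \cite[Theorem~3.5]{flandolischmalfuss96}, so one may alternatively simply invoke that result.
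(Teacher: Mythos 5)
Your proposal is correct and follows essentially the same route as the proof in Flandoli--Schmalfuss that the paper itself cites for this statement (the paper gives no independent proof): realise $A$ as the nested intersection of the pullback $\omega$-limit sets $K_\tau(\omega)$ of the absorbing set, use absorption of $B$ by itself to get compactness, argue attraction by contradiction and a diagonal/compactness extraction, obtain strict invariance from continuity and the cocycle-plus-$\theta$-shift identity $\varphi(t,\omega)K_\tau(\omega)=K_{\tau+t}(\theta_t\omega)$, and deduce $\mathcal F_{-\infty}^0$-measurability from the fact that $\varphi(t,\theta_{-t}\cdot,x)$ is $\mathcal F_{-t}^0$-measurable together with a countable-$\tau$ reduction and a selection argument. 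The only point worth flagging is that the $\mathcal F_{-\infty}^0$-measurability conclusion, as you correctly note, is not a consequence of continuity alone but of the white-noise/filtered structure of Definition~\ref{whitenoiseRDS}, so the theorem as stated is implicitly in that setting; otherwise the sketch is sound.
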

\begin{remark} \label{remark_mu_A}
Naturally, random attractors are related to invariant probability measures of a random dynamical system $(\theta, \varphi)$.
It follows directly from \cite[Proposition 4.5]{cf94} that, if the fibers of a random attractor $A$, i.e.~$\omega \mapsto A(\omega)$, are measurable with respect to $\mathcal F_{-\infty}^0$, there is an invariant measure $\mu$ for $(\theta, \varphi)$ such that $\omega \mapsto \mu_{\omega}$ is measurable with respect to $\mathcal F_{-\infty}^0$, i.e.~is a Markov measure, and satisfies $\mu_{\omega} (A(\omega)) =1$ for almost all $\omega \in \Omega$.
In particular, if there exists a unique invariant probability measure $\rho$ for the Markov semi-group $(P_t)_{t \ge 0}$, then the invariant Markov measure, supported on $A$, is unique by the one-to-one correspondence explained above.
Additionally, if the Markov semi-group is \emph{strongly mixing}, i.e.
$$ P_t f(x) \xrightarrow{t \to \infty} \int_\cX f(y) \rho (\rmd y) \fa \text{continuous and bounded } f: \cX \to \mathbb{R} \text{ and } x \in \cX\,,$$
then the set $\tilde A \in \mathcal{F} \times \mathbb{B}(\cX)$, given by $\tilde A(\omega) = \supp \mu_{\omega} \subset A(\omega)$ for almost all $\omega \in \Omega$, is a minimal weak random point attractor according to \cite[Proposition 2.20]{fgs16}.
\end{remark}

\subsection{Expectation of random isochron map}
\label{appendix:rel_rim}
We observe from equation~\eqref{eq:invariance2} that the random isochron map $\tilde \phi$ satisfies
\be \label{eq:expected_invariance2}
\mathbb{E}[\tilde \phi ( \varphi(t, \cdot, (\vartheta, r)), \theta_t \cdot, t)] = \mathbb{E}[\tilde \phi((\vartheta, r), \cdot,0)] +t\,.
\ee
%

Assume now that there is a function $\phi: \mathcal{R} \to \mathbb{R}$ such that for all $t$ in some interval $J=[0,T]$, $T >0$, we have
\begin{equation} \label{eq:intro_phi}
\mathbb{E}[ \phi ( \varphi(t, \omega, (\vartheta, r)))]= \mathbb{E}[\tilde \phi ( \varphi(t, \omega, (\vartheta, r)), \theta_t \omega, t)]\,.
\end{equation}
Then we obtain from equation~\eqref{eq:expected_invariance2} that
\be \label{eq:expected_invariance_phi}
\mathbb{E}[ \phi ( \varphi(t, \omega, (\vartheta, r)))] = \mathbb{E}[ \phi((\vartheta, r))] +t = \phi((\vartheta, r)) + t\,.
\ee
Hence, assuming the appropriate boundary conditions, we can deduce that $\phi = \bar \phi$, where $\bar \phi$ is the isochron function as derived above, satisfying equation~\eqref{eq:BKstat}. Furthermore, we can observe directly that 
\be \label{eq:phi_tildephi}
\bar \phi_{\textnormal{RDS}}((\vartheta, r)) = \mathbb{E}[\tilde \phi((\vartheta, r), \omega,0)] 
\ee
is the only cadidate for relation~\eqref{eq:intro_phi} to hold. When we insert equality~\eqref{eq:phi_tildephi} back into equation~\eqref{eq:intro_phi}, we obtain 
$$ \mathbb{E}[\mathbb{E}[ \tilde \phi ( \varphi(t, \omega, (\vartheta, r)), \omega', 0)]]= \mathbb{E}[\tilde \phi ( \varphi(t, \omega,(\vartheta, r) ), \theta_t \omega, t)].$$
If we choose $(\vartheta, r)$ to be a point on the random attractor, belonging to the CRPS $\psi$, say $(\vartheta, r)= \psi(0,\omega)$, then due to the fact that $ \tilde \phi ( \psi(t, \theta_t \omega), \theta_t \omega, t) = t$ for (almost) all $\omega \in \Omega$, this means that
\be \label{eq:doubleexpt}
\mathbb{E}[\mathbb{E}[ \tilde \phi ( \psi(t, \theta_t \omega), \omega', 0)]] = t.
\ee
Verifying equality~\eqref{eq:doubleexpt} would therefore lead to establishing $\bar \phi_{\textnormal{RDS}} = \bar \phi$.
We have not found a clear reasoning when and why (or why not) relation~\eqref{eq:doubleexpt} holds and leave it as an open problem to get a better understanding of this gap.

\bibliographystyle{abbrv}
\bibliography{mybibfile}

\begin{thebibliography}{10}

\bibitem{a98}
L.~Arnold.
\newblock {\em Random Dynamical Systems}.
\newblock Springer, Berlin, 1998.

\bibitem{as95}
L.~Arnold and M.~Scheutzow.
\newblock Perfect cocycles through stochastic differential equations.
\newblock {\em Probability Theory and Related Fields}, 101(1):65--88, 1995.

\bibitem{BaudelBerglund}
M.~Baudel and N.~Berglund.
\newblock Spectral theory for random {P}oincar\'{e} maps.
\newblock {\em SIAM J. Math. Anal.}, 49(6):4319--4375, 2017.

\bibitem{Bauermeisteretal}
C.~Bauermeister, T.~Schwalger, D.~Russell, A.~Neiman, and B.~Lindner.
\newblock Characteristic effects of stochastic oscillatory forcing on neural
  firing: analytical theory and comparison to paddlefish electroreceptor data.
\newblock {\em PLoS Comput. Bio.}, 9(8):e1003170, 2013.

\bibitem{b91}
P.~Baxendale.
\newblock Statistical equilibrium and two-point motion for a stochastic flow of
  diffeomorphisms.
\newblock In {\em Spatial stochastic processes}, volume~19 of {\em Progress in
  Probability}, pages 189--218. Birkh\"auser, Boston, 1991.

\bibitem{BenziParisiSuteraVulpiani}
R.~Benzi, G.~Parisi, A.~Sutera, and A.~Vulpiani.
\newblock Stochastic resonance in climatic change.
\newblock {\em Tellus}, 34(11):10--16, 1982.

\bibitem{BerglundGentzKuehn1}
N.~Berglund, B.~Gentz, and C.~Kuehn.
\newblock From random {Poincar{\'e}} maps to stochastic mixed-mode-oscillation
  patterns.
\newblock {\em J. Dyn. Diff. Equat.}, 27(1):83--136, 2015.

\bibitem{BerglundLandon}
N.~Berglund and D.~Landon.
\newblock Mixed-mode oscillations and interspike interval statistics in the
  stochastic {FitzHugh-Nagumo} model.
\newblock {\em Nonlinearity}, 25:2303--2335, 2012.

\bibitem{b12}
M.~Biskamp.
\newblock Pesin's formula for random dynamical systems on {$\mathbb{R}^d$}.
\newblock {\em Journal of Dynamics and Differential Equations}, 26(1):109--142,
  2014.

\bibitem{BlumenthalYoung}
A.~Blumenthal and L.-S. Young.
\newblock Equivalence of physical and {SRB} measures in random dynamical
  systems.
\newblock {\em Nonlinearity}, 32(4):1494--1524, 2019.

\bibitem{BredenEngel}
M.~Breden and M.~Engel.
\newblock Computer-assisted proof of shear-induced chaos in stochastically
  perturbed hopf systems.
\newblock {\em In preparation}, pages 1--40, 2020.

\bibitem{BrooksBressloff}
H.~Brooks and P.~Bressloff.
\newblock Quasicycles in the stochastic hybrid {Morris-Lecar} neural model.
\newblock {\em Phys. Rev. E}, 92(1):012704, 2015.

\bibitem{caolindnerthomas19}
A.~Cao, B.~Lindner, and P.~J. Thomas.
\newblock A partial differential equation for the mean-return-time phase of
  planar stochastic oscillators.
\newblock {\em arxiv1908.00487}, 2019.

\bibitem{Chicone}
C.~Chicone.
\newblock {\em Ordinary Differential Equations with Applications}, volume~34 of
  {\em Texts in Applied Mathematics}.
\newblock Springer, New York, second edition, 2006.

\bibitem{c91}
H.~Crauel.
\newblock Markov measures for random dynamical systems.
\newblock {\em Stochastics and Stochastics Reports}, 37(3):153--173, 1991.

\bibitem{cf94}
H.~Crauel and F.~Flandoli.
\newblock Attractors for random dynamical systems.
\newblock {\em Probability Theory and Related Fields}, 100:365--393, 1994.

\bibitem{crauelkloeden15}
H.~Crauel and P.~Kloeden.
\newblock Nonautonomous and random attractors.
\newblock {\em Jahresbericht der Deutschen Mathematiker-Vereinigung},
  117(3):173--206, 2015.

\bibitem{Crnjaric-Zic2019}
N.~{\v{C}}rnjari{\'{c}}-{\v{Z}}ic, S.~Ma{\'{c}}e{\v{s}}i{\'{c}}, and
  I.~Mezi{\'{c}}.
\newblock Koopman operator spectrum for random dynamical systems.
\newblock {\em Journal of Nonlinear Science}, 2019.

\bibitem{ds11}
G.~Dimitroff and M.~Scheutzow.
\newblock Attractors and expansion for{ B}rownian flows.
\newblock {\em Electonical Journal of Probability}, 16(42):1193--1213, 2011.

\bibitem{delr17}
T.~S. Doan, M.~Engel, J.~S.~W. Lamb, and M.~Rasmussen.
\newblock Hopf bifurcation with additive noise.
\newblock {\em Nonlinearity}, 31(10):4567--4601, 2018.

\bibitem{Engelphdthesis}
M.~Engel.
\newblock {\em Local phenomena in random dynamical systems: bifurcations,
  synchronisation, and quasi-stationary dynamics}.
\newblock PhD thesis, Imperial College London, 2018.

\bibitem{EngelLambRasmussen1}
M.~Engel, J.~S.~W. Lamb, and M.~Rasmussen.
\newblock Bifurcation analysis of a stochastically driven limit cycle.
\newblock {\em Comm. Math. Phys.}, 365(3):935--942, 2019.

\bibitem{fgs16}
F.~Flandoli, B.~Gess, and M.~Scheutzow.
\newblock Synchronization by noise.
\newblock {\em Probability Theory and Related Fields}, 168(3--4):511--556,
  2017.

\bibitem{flandolischmalfuss96}
F.~Flandoli and B.~Schmalfuss.
\newblock Random attractors for the {$3$}{D} stochastic {N}avier-{S}tokes
  equation with multiplicative white noise.
\newblock {\em Stochastics and Stochastics Reports}, 59(1-2):21--45, 1996.

\bibitem{GatesSuDingwell}
D.~Gates, J.~Su, and J.~Dingwell.
\newblock Possible biomechanical origins of the long-range correlations in
  stride intervals of walking.
\newblock {\em Physica A}, 380:259--270, 2007.

\bibitem{Giacominetal2018}
G.~Giacomin, C.~Poquet, and A.~Shapira.
\newblock Small noise and long time phase diffusion in stochastic limit cycle
  oscillators.
\newblock {\em J. Differential Equations}, 264(2):1019--1049, 2018.

\bibitem{Guckenheimer13}
J.~Guckenheimer.
\newblock Isochrons and phaseless sets.
\newblock {\em J. Math. Biol.}, 1(3):259--273, 1975.

\bibitem{rk04}
P.~Kloeden and M.~Rasmussen.
\newblock {\em Nonautonomous Dynamical Systems}, volume 176 of {\em
  Mathematical Surveys and Monographs}.
\newblock American Mathematical Society, Providence, RI, 2011.

\bibitem{LedrappierYoung}
F.~Ledrappier and L.-S. Young.
\newblock Entropy formula for random transformations.
\newblock {\em Probab. Theory Related Fields}, 80(2):217--240, 1988.

\bibitem{LiLuBates}
J.~Li, K.~Lu, and P.~W. Bates.
\newblock Invariant foliations for random dynamical systems.
\newblock {\em Discrete Contin. Dyn. Syst.}, 34(9):3639--3666, 2014.

\bibitem{Lindneretal}
B.~Lindner, J.~Garcia-Ojalvo, A.~Neiman, and L.~Schimansky-Geier.
\newblock Effects of noise in excitable systems.
\newblock {\em Physics Reports}, 392:321--424, 2004.

\bibitem{lq95}
P.~Liu and M.~Qian.
\newblock {\em Smooth Ergodic Theory of Random Dynamical Systems}, volume 1606
  of {\em Lecture Notes in Mathematics}.
\newblock Springer, Berlin, 1995.

\bibitem{NicolisNicolis}
C.~Nicolis and G.~Nicolis.
\newblock Stochastic aspects of climatic transitions---additive fluctuations.
\newblock {\em Tellus}, 33(3):225--234, 1981.

\bibitem{Pikovskycomment}
A.~Pikovsky.
\newblock Comment on ``asymptotic phase for stochastic oscillators''.
\newblock {\em Phys. Rev. Lett.}, 115:069401, Aug 2015.

\bibitem{RevzenGuckenheimer}
S.~Revzen and J.~Guckenheimer.
\newblock Finding the dimension of slow dynamics in a rhythmic system.
\newblock {\em J. R. Soc. Interface}, 9:957--971, 2012.

\bibitem{r79}
D.~Ruelle.
\newblock Ergodic theory of differentiable dynamical systems.
\newblock {\em Institut des Hautes \'Etudes Scientifiques. Publications
  Math\'ematiques}, 50:27--58, 1979.

\bibitem{SadhuKuehn}
S.~Sadhu and C.~Kuehn.
\newblock Stochastic mixed-mode oscillations in a three-species predator-prey
  model.
\newblock {\em Chaos}, 28(3):033606, 2017.

\bibitem{Scheutzow02}
M.~Scheutzow.
\newblock Comparison of various concepts of a random attractor: a case study.
\newblock {\em Arch. Math. (Basel)}, 78(3):233--240, 2002.

\bibitem{SchreiberBenaimAtchade}
S.~Schreiber, M.~Bena{\"{i}}m, and K.~Atchad{\'e}.
\newblock Persistence in fluctuating environments.
\newblock {\em J. Math. Biol.}, 62(5):655--683, 2011.

\bibitem{schuss10}
Z.~Schuss.
\newblock {\em Theory and Applications of Stochastic Processes}, volume 170 of
  {\em Applied Mathematical Sciences}.
\newblock Springer, New York, 2010.
\newblock An analytical approach.

\bibitem{SchwabedalPikovsky}
J.~Schwabedal and A.~Pikovsky.
\newblock Phase description of stochastic oscillations.
\newblock {\em Phys. Rev. Lett.}, 110(20):204102, 2013.

\bibitem{Schwabedaletal2012}
J.~Schwabedal, A.~Pikovsky, B.~Kralemann, and M.~Rosenblum.
\newblock Optimal phase description of chaotic oscillators.
\newblock {\em Physical Review E}, 85:026216, 2012.

\bibitem{SuRubinTerman}
J.~Su, J.~Rubin, and D.~Terman.
\newblock Effects of noise on elliptic bursters.
\newblock {\em Nonlinearity}, 17:133--157, 2004.

\bibitem{ThomasLindner}
P.~Thomas and B.~Lindner.
\newblock Asymptotic phase for stochastic oscillators.
\newblock {\em Phys. Rev. Lett.}, 113(25):254101, 2014.

\bibitem{ThomasLindnerReply}
P.~J. Thomas and B.~Lindner.
\newblock Thomas and lindner reply:.
\newblock {\em Phys. Rev. Lett.}, 115:069402, Aug 2015.

\bibitem{ZhaoZheng09}
H.~Zhao and Z.-H. Zheng.
\newblock Random periodic solutions of random dynamical systems.
\newblock {\em J. Differential Equations}, 246(5):2020--2038, 2009.

\end{thebibliography}

\end{document}